\newtheorem{lemma}{Lemma}
\newtheorem{teo}[lemma]{Theorem}
\newtheorem{prop}[lemma]{Proposition}
\newtheorem{cor}[lemma]{Corollary}
\theoremstyle{definition}
\newtheorem{defn}[lemma]{Definition}
\newtheorem{ex}[lemma]{Exercise} 
\newtheorem{warning}[lemma]{Warning}
\theoremstyle{remark}
\newtheorem{rem}[lemma]{Remark}
\newcommand{\tvect} [2] {\big(\!\!{\tiny\begin{array}{c} #1 \\ #2 \\ \end{array}} \!\!\big)}
\newcommand{\interior}[1]{{\rm int}(#1)}
\newcommand{\Iso}{{\rm Isom}}
\newcommand{\matX}{\ensuremath {\mathbb{X}}}
\newcommand{\matR} {\ensuremath {\mathbb{R}}}
\newcommand{\matZ} {\ensuremath {\mathbb{Z}}}
\newcommand{\matH} {\ensuremath {\mathbb{H}}}
\newcommand{\matS} {\ensuremath {\mathbb{S}}}
\newcommand{\matRP} {\ensuremath {\mathbb{RP}}}
\newcommand{\Isom} {\ensuremath {{\rm Isom}}}
\author{Bruno Martelli}
\title{An introduction to Coxeter polyhedra}
\begin{document}

\begin{abstract}
This paper is an introduction to Coxeter polyhedra in spherical, Euclidean, and hyperbolic geometries. It consists of essentially two parts that could be read independently. In the first we introduce non-obtuse polyhedra in the spherical, Euclidean, and hyperbolic spaces, and prove various fundamental theorems originated from Andreev, Coxeter, and Vinberg. In the second we introduce Coxeter polyhedra and use them to describe regular, semiregular, and uniform polyhedra and tessellations, mostly via the Wythoff construction. 
\end{abstract}

\maketitle

\section*{Introduction}
Coxeter polyhedra are finite-volume polyhedra whose dihedral angles divide $\pi$. They exist in various forms in all the three geometries $\matR^n$, $\matH^n$, $\matS^n$ and in many dimensions $n$, and they lie at the heart of several geometric and algebraic constructions, being intimately connected with geometric symmetries, uniform polyhedra, manifolds of constant curvature, simple Lie algebras, lattices in Lie groups, etc.

Coxeter simplexes have been classified by Coxeter \cite{C}, Lann\'er \cite{L}, Koszul \cite{K} and Chein \cite{Ch}, who produced some very nice tables where these objects are presented via the extremely convenient notation of \emph{Coxeter diagrams}. It turns out that these tables are enough to understand all the Coxeter polyhedra in $\matR^n$ and $\matS^n$. The theory of Coxeter polyhedra in $\matH^n$ is however much richer: in dimension $n=3$ they have been classified by Andreev \cite{A, A2} and Roeder \cite{RHD}, and the classification is a very instructive instance of Perelman's Geometrization of 3-manifolds (and a fundamental ingredient in Thurston's original proof for Haken 3-manifolds).

There is yet no general theory of Coxeter polyhedra in $\matH^n$ in dimension $n\geq 4$, and understanding these entities is a current major subject of research, as it is employing them to construct more complex objects like higher-dimensional hyperbolic manifolds. Coxeter polyhedra have been generalized in various ways, mostly in algebraic and topological directions, notably starting with the well established notion of \emph{Coxeter group}. 

Despite their great importance, there do not seem to exist many available introductory texts for Coxeter polyhedra, and these notes have been written to try to fill this gap. The paper contains essentially two parts that could be read separately. In Sections \ref{poly:section} to \ref{poly23:section} we encounter the fundamental notion of \emph{non-obtuse polyhedron} and prove various fundamental theorems; the most important reference for these sections is Vinberg's excellent paper \cite{V}. In Sections \ref{Coxy:section} and \ref{regy:section} we finally meet Coxeter polyhedra and use them to describe a plethora of polyhedra and tessellations. We do not pursue further, ignoring plenty of additional beautiful examples and applications, to focus on these two introductory parts.

Most of the theorems are provided with complete proofs, a notable exception being the Theorem of Andreev and Roeder for which there already exists an excellent source \cite{RHD}. I have tried as much as possible to use a geometric language, shamelessly exploiting the enormous resources of Wikipedia Commons for the pictures of polyhedra and tessellations, and always preferring an image to a cumbersome notation to describe them in their full splendor. 

\subsection*{Acknowledgments}
Part of these notes were written during the year 2025 to prepare a minicourse on \emph{Hyperbolic manifolds constructed via Coxeter polyhedra} that I gave in Montreal and Ventotene, and a seminar at the Georgia International Topology Conference. I warmly thank the organizers of these conferences for providing excellent environments for research. I also warmly thank the referee for their comments and suggestions.

All the figures in Section \ref{regy:section} are taken from Wikipedia Commons and are either in the Public Domain or have a CC BY-SA 3.0 License. Those with a CC License are: the green polyhedra in Figures \ref{regular:fig}, \ref{archimedean:fig} and \ref{Catalan:fig}, made by Cyp; the hyperbolic 3-dimensional tessellations in Figure \ref{H:fig} and \ref{Hsr:fig}, made by Roice3; Figure \ref{Truncated_cuboctahedron:fig} made by Watchduck; Figure \ref{Honeycombs:fig} made by TED-43; Figure \ref{gyrati:fig} made by Tomruen.

\section{Polyhedra} \label{poly:section}
We fix some notation, briefly introduce the hyperbolic space, and then define polyhedra in all the three geometries $\matR^n, \matH^n, \matS^n$ trying to use a unifying language. 
We define the Gram matrix. Here polyhedra have finite volume by assumption. 

\subsection{Hyperbolic space}
We recall some standard facts in hyperbolic geometry, referring to \cite{M} for more details.
Let $n\geq 0$ be any natural number. We let $\matR^{n,1}$ denote the Minkowski space, that is the vector space $\matR^{n+1}$ equipped with the Lorentzian product
$$\langle x,y\rangle = -x_1y_1+ x_2y_2 + \cdots + x_{n+1}y_{n+1}.$$

We represent the hyperbolic space as usual with the hyperboloid model
$$\matH^n = \big\{x \in \matR^{n,1}, \langle x, x \rangle = -1, x_1 > 0\big\}.$$

The space $\matH^0$ is a point. When $n\geq 1$ the space $\matH^n$ is not compact, and
its compactification $\bar \matH^n$ of $\matH^n$ is obtained by projecting $\matH^n$ in $\matRP^n$ and taking its closure there. The \emph{sphere at infinity} $\partial \matH^n = \bar \matH^n \setminus \matH^n$ is the set of light rays in $\matR^{n,1}$. We denote a light ray as $[v]$ where $v$ is any future-directed vector in it. Every point at infinity $[v]$ determines a foliation of $\matH^n$ into \emph{horospheres} 
$$O_t = \{x\in \matH^n\ |\ \langle x, v \rangle = t \}$$
with $t<0$. Each horosphere
is isometric to $\matR^{n-1}$ and orthogonal to all the geodesics pointing towards $[v]$. The isometry with $\matR^{n-1}$ is obtained by projecting $O_t$ to the affine hyperplane $\{x_1=0, \langle x, v \rangle = t\} \subset \{x_1=0\} = \matR^n$ along rays parallel to $v$.

The compactification $\bar \matH^n \subset \matRP^n$ is a closed disc and using an affine chart it becomes the unit disc in $\matR^n$. This is the \emph{Klein model} for $\bar \matH^n$. 
The closure in $\bar \matH^n$ of a subset $S \subset \matH^n$ is denoted as $\bar S$.

\subsection{Subspaces}
Let $n\geq 0$ be any natural number. Throughout this paper we use the following notation:

\begin{center}
\emph{The symbol $\matX^n$ will always denote either $\matR^n, \matS^n$, or $\matH^n$.}
\end{center} 

The three spaces share many notable features, for instance they all have well-behaved subspaces of all dimensions $0\leq k< n$, a crucial fact to define polyhedra.
A \emph{$k$-dimensional subspace} $S$ in $\matR^n$ is an affine $k$-dimensional subspace. A \emph{$k$-dimensional subspace} $S$ in $\matS^n$ or $\matH^n$ is the intersection $S = W \cap \matS^n$ or $S=W \cap \matH^n$ with a $(k+1)$-dimensional vector subspace $W$ of $\matR^{n+1}$ or $\matR^{n,1}$. In the latter case we require the intersection to be non-empty, that is $W$ should have signature $(k,1)$. 

In any case, a $k$-dimensional subspace of $\matX^n$ is a totally geodesic copy of $\matX^k$. The intersection of two subspaces is either empty or a subspace. A subspace of codimension one is called a \emph{hyperplane} and it cuts $\matX^n$ into two connected components. 


\subsection{Half-spaces}
A \emph{dual unit vector} for $\matX^n$ is a vector $v$ in $\matR^n, \matR^{n+1}, \matR^{n,1}$ depending on $\matX^n = \matR^n, \matS^n, \matH^n$, such that $\langle v,v \rangle = 1$. 

A \emph{half-space} $H \subset \matX^n$ is a subset 
\begin{equation*}
H = \{ x \in \matX^n \ |\ \langle v, x \rangle \leq a \}
\end{equation*}
where $v$ is a dual unit vector, $a=0$ if $\matX^n = \matH^n, \matS^n$, and $a\in \matR$ if $\matX^n = \matR^n$. Both $v$ and $a$ are determined by $H$, and we say that $v$ is the \emph{unit normal vector} of $H$. If $n \geq 1$ then $\partial H$ is a hyperplane and $H$ is the closure of one of the two connected components of $\matX^n$ cut by $H$. If $n=0$ a dual unit vector $v$ may exist only if $\matX^0 = \matS^0$ and in this case $H$ is one of the two points in $\matS^0$. There are no half-spaces in $\matR^0, \matH^0$.


\subsection{Polyhedra}
Let $n\geq 0$ be any natural number.
A \emph{polyhedron} $P$ in $\matX^n$ is the intersection 
$$P = H_1 \cap \cdots \cap H_k$$
of finitely many half-spaces $H_i$ in $\matX^n$, such that the following conditions hold:
\begin{enumerate}
\item $P$ has finite non-zero volume, and 
\item $P$ is contained in the interior of a half-space when $\matX^n = \matS^n$. 
\end{enumerate}

Note that many authors like Vinberg \cite{V} do not assume that $P$ has finite volume. The condition (2) is equivalent to requiring that $P$ contains no antipodal points. Some authors also do not assume (2), and we do so here because it has many advantages (for instance, it makes sense to define the convex hull of points if they lie in the interior of a half-space) and few disadvantages, since one can show that a polyhedron that does not fulfill (2) is a multiple spherical suspension of one that does (for instance, the intersection of two half-planes in $S^2$ with different boundaries is a \emph{bigon}, that is not allowed here as a polyhedron; a bigon with some interior angles $\alpha < \pi$ is the spherical suspension of a segment of length $\alpha$, that is an allowed 1-dimensional polyhedron).

By definition we have 
$$P = \{x \in \matX^n \ |\ \langle x, v_i \rangle \leq a_i \}$$
where $a_i=0$ if $\matX^n \neq \matR^n$. Here $v_i$ is the normal unit vector of $H_i$.

The intersection of $P$ with the boundary of a half-space containing $P$ is called a \emph{face} of $P$, whose \emph{dimension} is the dimension of its \emph{supporting subspace}, the smallest subspace in $\matX^n$ containing it. A face of dimension $k=0,1,n-2,n-1$ is called a \emph{vertex}, \emph{edge}, \emph{ridge}, \emph{facet} respectively.

A polyhedron $P\subset \matX^n$ is called \emph{hyperbolic}, \emph{Euclidean}, and \emph{spherical} depending on whether $\matX^n = \matH^n, \matR^n$, or $\matS^n$.
We will typically consider polyhedra only up to isometries in $\matH^n$, $\matS^n$ and up to similarities in $\matR^n$. The term \emph{polytope} is sometimes employed for polyhedra of dimension higher than 3, but we will not use it here.

A polyhedron in $\matX^1$ is a segment of some finite length $\ell>0$, and we have $\ell < \pi$ if $\matX^1=\matS^1$. A polyhedron in $\matX^0$ is a point: if $\matX^0 = \matS^0$ a point is a half-space, while if $\matX^0 = \matH^0, \matR^0$ the point is the whole space, obtained as the intersection of an empty set of half-spaces.

\subsection{Compact and non-compact polyhedra}
If $P$ lies in $\matS^n$ or $\matR^n$, it is compact. If it lies in $\matH^n$, it may not be, but its closure $\bar P$ in $\bar \matH^n$ of course is, and it intersects $\partial \matH^n$ into finitely many points called \emph{ideal vertices}. 
To avoid potential confusion, the vertices of $P$ contained in $\matH^n$ are sometimes called \emph{real}. The polyhedron $P$ is itself \emph{ideal} if it has no real vertex. 

In the Klein model the subspaces of $\matH^n$ are the affine subspaces of $\matR^n$ intersected with the unit ball. With this model the closure $\bar P \subset \bar \matH^n$ of a polyhedron $P\subset \matH^n$ is just a Euclidean polyhedron contained in the closed unit disc. The faces of $\bar P$ are the faces of $P$, plus its ideal vertices. We stress the fact that the ideal vertices of $P$ do not count as faces for $P$, unless otherwise specified.

\subsection{Exercises}

\begin{ex} \label{unique:ex}
Let $P$ be a polyhedron in $\matX^n$. We have 
$$P = H_1 \cap \cdots \cap H_k$$
for a unique minimal set of half-spaces $H_1,\ldots, H_k$. When $n \geq 1$, the polyhedron has $k$ facets $F_1,\ldots, F_k$ and $\partial H_i$ contains $F_i$. When $n=0$ the polyhedron $P$ is a point and we have $k=1$, $P = H_1$ for $\matX^0 = \matS^0$, and $k=0$, $P=\matX^0$ for $\matX^0 = \matH^0, \matR^0$.
\end{ex}


\begin{ex} \label{CH:ex}
The convex hull of some points (that lie in the interior of a half-space in the spherical case) is well-defined in $\matX^n$. The convex hull of finitely many points in $\matX^n$ that are not contained in a hyperplane is a polyhedron. This holds also in $\bar \matH^n$ with few adjustments (we require $n\geq 2$, otherwise we might get segments of infinite length, and the points should not be contained in the closure of a hyperplane). 

Every polyhedron in $\matS^n, \matR^n$ is the convex hull of its vertices.  Every polyhedron in $\matH^n$ is the convex hull in $\bar\matH^n$ of its real and ideal vertices, with its ideal vertices removed.
\end{ex}

From this exercise we deduce that every face of a polyhedron $P$ is itself a polyhedron in its supporting subspace, except when $P\subset \matH^n$ and the face is an edge with at least one ideal endpoint (because it has infinite length).

\begin{ex}
The faces of a polyhedron $P \subset \matS^n, \matR^n$, after adding $\emptyset$ and $P$, form a \emph{lattice}: they form a poset by inclusion, and every two faces have a least upper bound and a greatest lower bound. If $P\subset \matH^n$, this is true for the compactification $\bar P$ (that is, it is true if we consider ideal vertices as faces).
\end{ex}

Two polyhedra, possibly of different geometries, are \emph{combinatorially equivalent} if they have isomorphic face lattices. They are \emph{combinatorially dual} if their face lattices are isomorphic after reversing the inclusions of one of them.

\begin{ex}
Every combinatorial equivalence between two compact polyhedra can be realized via a canonical homeomorphism, by taking barycentric subdivisions and coordinates (that are well-defined in any geometry $\matX^n$!).
\end{ex}

\begin{ex} \label{simplex:ex}
A polyhedron in $\matX^n$ has at least $n+1$ facets, and it has $n+1$ if and only if it is combinatorially a simplex (possibly with some ideal vertices if $\matX^n = \matH^n$). 
\end{ex}

\begin{ex} \label{join:ex}
The product $P\times Q$ of two Euclidean polyhedra $P\subset \matR^m$ and $Q \subset \matR^n$ is a Euclidean polyhedron in $\matR^{m+n}$. The \emph{join} 
$$P*Q = \big\{(x\cos \theta , y\sin \theta ) \in \matR^{m+1} \times \matR^{n+1}\ |\ x \in P, y \in Q, \theta \in [0,\pi/2]\big\}$$
of two spherical polyhedra $P\subset \matS^m$ and $Q \subset \matS^n$ is a spherical polyhedron in $\matS^{m+n+1}$.
\end{ex}

The product of two Euclidean segments is a rectangle, and the join of two spherical segments is a spherical tetrahedron. The join of two points in $\matS^0$ is a segment in $\matS^1$ of length $\ell = \pi/2$, and the join of a point in $\matS^0$ and a segment in $\matS^1$ of length $\ell< \pi/2$ is a spherical triangle with interior angles $\ell, \pi/2, \pi/2$. More generally, the join of two spherical simplexes is a spherical simplex.

\begin{ex} \label{ortho:ex}
Let $S_1, \ldots, S_k \subset \matH^n$ be $k\leq n$ hyperplanes. One of the following assertions holds: 
\begin{enumerate}
\item $S_1\cap \cdots \cap S_k \neq \emptyset$;
\item The hyperplanes are all orthogonal to some horosphere;
\item The hyperplanes are all orthogonal to some $(k-1)$-space $Z \subset \matH^n$.
\end{enumerate}
\end{ex}
\begin{proof}[Hint]
Consider $\matH^n$ inside $\matRP^n$. The hyperplanes $S_i$ extend to hyperplanes $\hat S_i$ in $\matRP^n$, that must intersect at some $P \in \matRP^n$. We get (1), (2), (3) depending on whether $P$ lies in $\matH^n$, $\partial \matH^n$, or outside of $\bar \matH^n$. 
\end{proof}

\subsection{Links and dihedral angles}
Let $P$ be a polyhedron in $\matX^n$. The \emph{link} of a $k$-dimensional face $F$ of $P$ is a polyhedron in $\matS^{n-k-1}$ obtained  by intersecting $P$ with a small rescaled $\matS^{n-k-1}$ contained in a $(n-k)$-subspace intersecting $F$ orthogonally in some point $x\in \interior F$ and centered at $x$, see Figure \ref{links:fig}.

\begin{figure}
 \begin{center}
\centering
\labellist
\small\hair 2pt
\pinlabel $v$ at 47 70
\pinlabel $e$ at 218 120
\pinlabel $x$ at 218 80
\endlabellist
  \includegraphics[width = 7 cm]{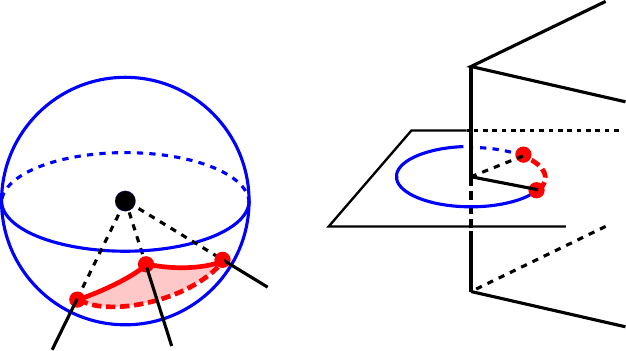}
 \end{center}
 \caption{The link of a (real) vertex $v$ and of an edge $e$ of a three-dimensional polyhedron is a spherical triangle and a spherical arc respectively (both drawn in red).}  \label{links:fig}
\end{figure}

The link of a ridge $F$ is a segment in $\matS^1$ of some length $\alpha \in (0, \pi)$, that we record as the \emph{dihedral angle} of $P$ at $F$, see Figure \ref{links:fig}-(right). We will see that dihedral angles are fundamental for our understanding of polyhedra.

If $P\subset \matH^n$, the \emph{link} of an ideal vertex $v$ of $P$ is the polyhedron in $\matR^{n-1}$ obtained by intersecting $P$ with a small horosphere centered at $v$. The dihedral angle of an ideal vertex of a polygon is zero by convention. 

We remark that the link of a face is a spherical polyhedron, while the link of an ideal vertex is a Euclidean polyhedron.


\subsection{Gram matrix} \label{Gram:subsection}
Let $P$ be a polyhedron in $\matX^n$. By Exercise \ref{unique:ex} we have 
$$P = H_1 \cap \cdots \cap H_k$$
for a unique minimal set of half-spaces $H_1,\ldots, H_k$, whose boundaries contain the $k$ facets $F_1,\ldots, F_k$ of $P$ if $n\geq 1$. Let $v_i$ be the unit normal vector of $H_i$. 
The \emph{Gram matrix} of $P$ is the $k\times k$ symmetric matrix $G$ with entries
\begin{equation*} 
G_{ij} = \langle v_i, v_j \rangle.
\end{equation*}

The Gram matrix is clearly invariant under isometries of $\matH^n, \matS^n$ and similarities of $\matR^n$. The low-dimensional cases are easily understood. If $P$ is a point in $\matS^0$, we have $k=1$ and hence $G= (1)$. If $P$ is a point in $\matR^0, \matH^0$ we have $k=0$ and hence $G= \emptyset$. If $P$ is a segment
in $\matX^1$ of some length $\ell$ then $G$ equals
\begin{equation} \label{G2:eq}
\begin{pmatrix} 1 & -\cos \ell \\ -\cos \ell & 1 \end{pmatrix}, \qquad
\begin{pmatrix} 1 & -1 \\ -1 & 1 \end{pmatrix}, \qquad
\begin{pmatrix} 1 & -\cosh \ell \\ -\cosh \ell & 1 \end{pmatrix}
\end{equation}
depending on $\matX^1 = \matS^1, \matR^1, \matH^1$. If $\matX^1 = \matS^1$ then $\ell < \pi$.
If $n\ge 2$ we have
$$
G_{ij} = \begin{cases} 
1 & {\rm\ if\ } i = j, \\
-\cos \alpha & {\rm if\ } \partial H_i {\rm\ and\ } \partial H_j {\rm\ are\ incident\ with\ angle}\ \alpha, \\
-1 & {\rm if\ } \partial H_i {\rm\ and\ } \partial H_j {\rm\ are\ parallel}, \\
-\cosh d & {\rm if\ } \partial H_i {\rm\ and\ } \partial H_j {\rm\ are\ ultraparallel\ with\ distance\ } d.
\end{cases}
$$

\begin{figure}
 \begin{center}
\centering
\labellist
\small\hair 2pt
\endlabellist
  \includegraphics[width = 9 cm]{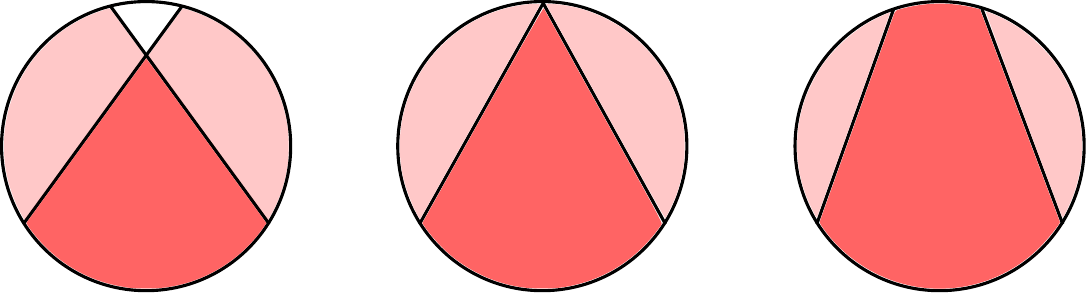}
 \end{center}
 \caption{The hyperplanes $\partial H_i$ and $\partial H_j$ can be incident, parallel, or ultraparallel in $\matH^n$.}  \label{FiFj:fig}
\end{figure}

See Figure \ref{FiFj:fig}. The angle $\alpha$ is the interior one with respect to $P$, and coincides with the dihedral angle of the face $F_i\cap F_j$ when this intersection is non-empty. Two disjoint hyperplanes in $\matH^n$ are \emph{parallel} or \emph{ultraparallel} depending on whether their closures in $\bar \matH^n$ intersect or not. Two hyperplanes can be parallel only in $\matR^n$ or $\matH^n$, and ultraparallel only in $\matH^n$.

We deduce in particular that $G_{ij} \leq 1$ and $G_{ij} = 1$ if and only if $i=j$. In the geometries $\matS^n$ and $\matR^n$ we also have $G_{ij} > -1$ and $G_{ij} \geq -1$ respectively.
We say that $G$ is \emph{decomposable} if 
$$G = \begin{pmatrix} G_1 & 0 \\ 0 & G_2 \end{pmatrix}$$
after possibly acting simultaneously on rows and columns via some permutation $\sigma$. As every symmetric matrix, $G$ decomposes uniquely (up to permutations) into some indecomposable principal submatrices.

\begin{ex} \label{G:ex}
The Gram matrix $G$ of a polyhedron $P$ is decomposable if and only if either $\matX^n = \matR^n$ and $P=P_1\times P_2$ with $\dim P_i > 0$, or $\matX^n = \matS^n$ and $P=P_1*P_2$, and $G_i$ is the Gram matrix of $P_i$.
\end{ex}

The following exercise is of fundamental importance.

\begin{ex}
Let $P \subset \matX^n$ be a polyhedron with $n\geq 1$. The unit normal vectors $v_1,\ldots, v_k$ generate the space $\matR^n, \matR^{n+1}$, or $\matR^{n,1}$. Therefore the signature of $G$ is 
$$(n,0,k-n), \quad (n+1,0,k-n-1), \quad {\rm or} \quad (n,1,k-n-1)$$
depending on the geometry $\matX^n = \matR^n, \matS^n$, or $\matH^n$. 
\end{ex}
\begin{proof}[Hint]
To prove that $v_1,\ldots, v_k$ generate,
use the fact that $P$ is contained in the interior of a half-space if $\matX^n = \matS^n$ and that it has finite volume if $\matX^n = \matR^n, \matH^n$.
\end{proof}

\section{Non-obtuse polyhedra} \label{non-obty:section}
We introduce a class of particularly well-behaved polyhedra called \emph{non-obtuse}, which contains all the yet-to-be-defined Coxeter polyhedra. This class can be defined in two natural ways, that are luckily equivalent by a theorem of Andreev \cite{A3}, whose proof is seldom reported despite its fundamental importance in the theory of Coxeter polyhedra. The theory of non-obtuse polyhedra was masterfully described by Vinberg \cite{V}, a source that we strongly suggest for further reading. Most of the material here is taken from there.

\subsection{Definition}
Let $P\subset \matX^n$ be a polyhedron, with Gram matrix $G$. There are two natural ways to define when $P$ is \emph{non-obtuse}:
\begin{enumerate}
\item If $G_{ij} \leq 0$ for all $i\neq j$, or
\item If the dihedral angles of all the ridges are $\leq \pi/2$.
\end{enumerate}
We adopt (1) as a definition, because it is stronger and robust, and it makes sense also in low dimensions. Later on, we will prove Andreev's Theorem \ref{nonobtuse:teo} that reassuringly asserts that (1) $\Longleftrightarrow$ (2) when $n\geq 2$.

We easily classify the low-dimensional non-obtuse polyhedra.
A polyhedron $P$ in $\matX^0$ is a point, we have $G=(1)$ or $\emptyset$ depending on the geometry, and hence the point $P$ is non-obtuse in any case. A polyhedron $P \subset \matX^1$ is a segment of some length $\ell>0$ (with $\ell < \pi$ if $\matX^1=\matS^1$) and its Gram $2\times 2$ matrix $G$ is as in \eqref{G2:eq} from Section \ref{Gram:subsection}. If $\matX^1 = \matH^1, \matR^1$, the segment is always non-obtuse, while if $\matX^1 = \matS^1$ the segment is non-obtuse $\Longleftrightarrow$ $\ell \leq \pi/2$.

By Exercise \ref{G:ex} the class of non-obtuse polyhedra is closed under products and joins. 
The following theorem shows that the non-obtuse condition is very restrictive in the spherical and Euclidean geometries:

\begin{teo} \label{non-obtuse:teo}
Every non-obtuse polyhedron in $\matS^n$ is a simplex. Every non-obtuse polyhedron in $\matR^n$ is a product of simplexes.
\end{teo}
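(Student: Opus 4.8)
The plan is to read the non-obtuse hypothesis off the Gram matrix $G$ and to analyze it block by block. Since the normals $v_i$ are unit vectors we have $G_{ii}=1$, so I would write $G = I - A$, where $A$ is the symmetric matrix with $A_{ii}=0$ and $A_{ij} = -G_{ij}\geq 0$ for $i\neq j$; the non-obtuse condition is precisely that $A$ has nonnegative entries. The matrix $G$ is decomposable exactly when $A$ is (simultaneous row/column permutations preserve $I$), and for symmetric matrices reducibility and decomposability coincide. By Exercise \ref{G:ex} the indecomposable blocks of $G$ are the Gram matrices of the factors in a join decomposition $P = P_1*\cdots*P_m$ in the spherical case and in a product decomposition $P = P_1\times\cdots\times P_m$ in the Euclidean case, so the whole statement will follow once each indecomposable block is understood.

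The technical heart, and the step I expect to be the main obstacle, is a linear-algebra lemma that I would prove using the Perron--Frobenius theorem. Suppose $G = I - A$ is indecomposable and positive semidefinite, with $A\geq 0$ irreducible. Then $A$ has a Perron eigenvalue $\rho$ equal to its largest eigenvalue, which is simple and carries a strictly positive eigenvector $u$. Positive semidefiniteness of $G$ forces every eigenvalue of $A$ to be $\leq 1$, hence $\rho\leq 1$, and $\ker G$ is exactly the $1$-eigenspace of $A$. Therefore either $\rho<1$ and $G$ is positive definite, or $\rho=1$ and $\ker G$ is one-dimensional, spanned by the strictly positive vector $u$. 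In short, an indecomposable non-obtuse $G$ has corank at most one, and a one-dimensional kernel is always generated by a vector with all entries positive. (Equivalently one can avoid quoting Perron--Frobenius and split a null vector into its positive and negative parts, using $G_{ij}\leq 0$ and semidefiniteness to show each part is again null.)

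For the spherical case recall that $G$ is positive semidefinite of rank $n+1$. I decompose $G$ into indecomposable blocks. If some block had corank one, the lemma would give a relation $\sum_i u_i v_i = 0$ with all $u_i>0$, the sum ranging over the indices of that block. Pairing with an arbitrary $x\in P$ and using $\langle v_i,x\rangle\leq 0$ yields $0=\sum_i u_i\langle v_i,x\rangle$ with every summand $\leq 0$, hence $\langle v_i,x\rangle=0$ for all $i$ in the block; thus $P$ lies in a hyperplane $\partial H_i$, contradicting $\Vol(P)>0$. So no block is singular, $G$ is positive definite, and its rank equals its size, giving $k=\rk G=n+1$. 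By Exercise \ref{simplex:ex} the polyhedron $P$ is a simplex.

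For the Euclidean case I write $P = P_1\times\cdots\times P_m$ following the decomposition of $G$ into indecomposable blocks $G_1,\dots,G_m$, with $P_j\subset\matR^{n_j}$ and $\sum_j n_j = n$. Each $G_j$ is the Gram matrix of $P_j$, hence positive semidefinite of rank $n_j$; on the other hand $P_j$ has at least $n_j+1$ facets (Exercise \ref{simplex:ex}), so $G_j$ is singular and by the lemma has corank exactly one. Consequently $P_j$ has precisely $n_j+1$ facets and is a simplex, and $P$ is a product of simplexes. In both geometries the only genuinely delicate input is the Perron--Frobenius control of the kernel; once the positivity of the null vector is secured, the geometric conclusions are immediate.
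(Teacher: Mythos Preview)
Your proof is correct and follows essentially the same approach as the paper: both write $G=I-A$ with $A\ge 0$, invoke Perron--Frobenius on an indecomposable block to see that the lowest eigenvalue of $G$ is simple with a strictly positive eigenvector, and then use that positive null vector (paired against an interior point of $P$) to rule out the degenerate spherical case, leaving only the simplex/product-of-simplexes count via Exercise~\ref{simplex:ex}. The only differences are organizational---you phrase the Perron--Frobenius input as a standalone lemma and treat the blocks in place rather than first reducing to the indecomposable case---and you additionally sketch the elementary ``split into positive and negative parts'' alternative, which the paper does not mention.
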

\begin{proof}
Let $G$ be the Gram matrix of a non-obtuse polyhedron $P$ in $\matS^n$ or $\matR^n$. By Exercises \ref{join:ex} and \ref{G:ex} we may suppose that $G$ is indecomposable. Therefore $G=I-B$ for some indecomposable $B\geq 0$. By the Perron -- Frobenius Theorem the matrix $B$ has a largest simple positive eigenvalue $\lambda>0$ with positive eigenvector $v>0$. Therefore $G$ has a lowest simple eigenvalue $1-\lambda$ with the same eigenvector $v>0$. 

The signature of $G$ is either $(n,0,k-n)$ or $(n+1,0,k-n-1)$ depending on whether we work in $\matR^n$ or $\matS^n$. Since the lowest eigenvalue of $G$ is simple, the signature is one of these: $(n,0,0), (n,0,1)$, $(n+1,0,0)$, or $(n+1,0,1)$. By Exercise \ref{simplex:ex} the first case is excluded, and the second and third cases yield a simplex. In the fourth case we would have $\lambda = 1$ and $Gv=0$, which gives a dependence relation for the columns of $G$ with positive coefficients. Since they generate $\matR^{n+1}$, the same relation holds for the normal vectors of the facets of $P$, a contradiction, since the scalar product of each such vector with any fixed interior point of $P$ is negative.
\end{proof}

\begin{figure}
 \begin{center}
  \includegraphics[width = 10 cm]{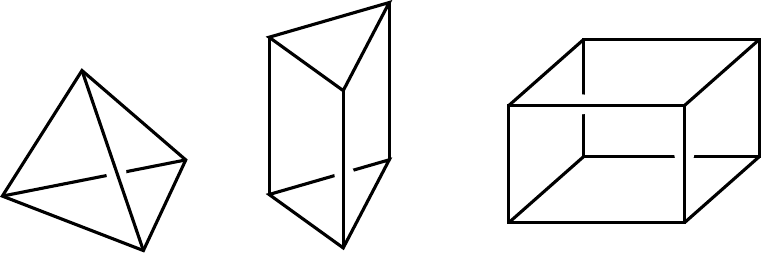}
 \end{center}
 \caption{The only nonobtuse polyhedra in $\matR^3$.} \label{nonobtuse3:fig}
\end{figure}

The only non-obtuse polygons in $\matR^2$ are the non-obtuse triangles and the rectangles.
The only non-obtuse polyhedra in $\matR^3$ are those shown in Figure \ref{nonobtuse3:fig}, that are a tetrahedron with non-obtuse dihedral angles, a rectangular prism with non-obtuse base triangles, and a rectangular parallelepiped.
We will encounter many more types of non-obtuse polyhedra in the hyperbolic space $\matH^n$.

\begin{prop} \label{face:non-obtuse:prop}
Every face $f$ of a non-obtuse polyhedron $P\subset \matX^n$ is either a non-obtuse polyhedron or an edge of infinite length (only if $\matX^n = \matH^n$). The dihedral angles of $f$ are smaller or equal than the corresponding ones of $P$.
\end{prop}
\begin{proof}
Let $F_1$ be a facet of $P$, adjacent to some facets $F_2, \ldots, F_h$. If $F_1$ is not an edge of infinite length, it is a polyhedron. Let $v_1,\ldots, v_h$ be their unit normal vectors. The facet $F_1$ is contained in a hyperplane $S$ and has facets $F_1 \cap F_i$ with $i=2,\ldots, h$, with (unnormalized) normal vectors $v_i' = v_i - \langle v_i,v_1 \rangle v_1$. We get
\begin{align*}
\langle v_i', v_j' \rangle & = \langle v_i - \langle v_i,v_1 \rangle v_1, v_j - \langle v_j,v_1 \rangle v_1\rangle \\
& = \langle v_i, v_j \rangle - \langle v_i, v_1 \rangle \langle v_j, v_1 \rangle \leq \langle v_i, v_j \rangle \leq 0
\end{align*}
for every $i \neq j$. Hence $F_1$ is non-obtuse, and its dihedral angles are $\leq$ the corresponding ones of $P$. By iterating we deduce this for every face of $P$.
\end{proof}

As anticipated, Andreev proved the following natural and useful criterion \cite{A3}.

\begin{teo} \label{nonobtuse:teo}
If a polyhedron $P\subset \matX^n$ with $n\geq 2$ has all dihedral angles $\leq \pi/2$, two facets intersect $\Longleftrightarrow$ their supporting hyperplane do. Therefore $P$ is non-obtuse.  
\end{teo}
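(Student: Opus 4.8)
The plan is to prove the nontrivial implication and then read off non-obtuseness. First I would record the trivial direction together with a reformulation: since $F_i=P\cap\partial H_i$, one has the identity
\[
F_i\cap F_j = P\cap\partial H_i\cap\partial H_j,
\]
so, writing $L=\partial H_i\cap\partial H_j$, the content is precisely the implication $L\neq\emptyset\Rightarrow P\cap L\neq\emptyset$: an acute-angled $P$ sitting inside the wedge $H_i\cap H_j$ must reach its edge $L$. Granting this, ``non-obtuse'' follows at once from the Gram formula. If two facets meet they span a ridge, whose angle $\alpha$ is $\le\pi/2$ by hypothesis, so $G_{ij}=-\cos\alpha\le0$; if they do not meet, then by the equivalence their hyperplanes are disjoint, hence parallel or ultraparallel, so $G_{ij}=-1$ or $G_{ij}=-\cosh d\le0$. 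In $\matS^n$ every two hyperplanes meet, so the statement forces all pairs of facets to be adjacent, consistently with Theorem \ref{non-obtuse:teo}.

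For the implication itself I would argue by contradiction using a shortest-segment argument, organised as an induction on $n$. Suppose $L\neq\emptyset$ but $P\cap L=\emptyset$; then $\bar F_i$ and $\bar F_j$ are disjoint compact subsets of the convex body $\bar P$, and I take a shortest geodesic segment $[x,y]$ with $x\in\bar F_i$, $y\in\bar F_j$. By convexity $[x,y]\subset\bar P$, and minimality makes it meet orthogonally the minimal faces $A\ni x$, $B\ni y$ containing its endpoints in their relative interiors. The clean case is $A=F_i$, $B=F_j$: then $[x,y]$ is a common perpendicular of positive length to the two incident hyperplanes $\partial H_i,\partial H_j$, which is impossible — a geodesic orthogonal to both would force $v_i\parallel v_j$ in $\matR^n$, and in $\matS^n$ or $\matH^n$ would force the two hyperplanes to be disjoint, contradicting $L\neq\emptyset$. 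This obstruction is uniform across the three geometries and is the geometric heart of the proof; note that it uses only convexity, not the angle hypothesis.

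The real work is to rule out the ``bending'' case, where the foot $x$ (say) lies in a proper face $A\subsetneq F_i$, so that $A\subseteq F_i\cap F_m$ for some other facet $F_m$. This is exactly where the acute-angle hypothesis must enter: the first-order minimality condition expresses the direction of $[x,y]$ through the outward facet normals at $x$, and combining it with the inequalities $G_{im}\le0$ coming from the ridges $F_i\cap F_m$ I would descend either to the facet $F_m$ or to the spherical link of $A$ — which is again acute-angled, its dihedral angles being ridge angles of $P$ — and invoke the inductive hypothesis to relocate the foot into a facet interior, reducing to the clean case.

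I expect this bending step to be the main obstacle, for two intertwined reasons. First, the natural way to descend is to use Proposition \ref{face:non-obtuse:prop} to see that the relevant face is acute-angled and then apply induction; but that Proposition presupposes non-obtuseness, which is part of what we are proving. The induction therefore cannot be phrased as ``reduce to a facet'': instead one must carry the three assertions — $P$ is non-obtuse, the facet equivalence holds for $P$, and every proper face of $P$ is acute-angled — simultaneously, breaking the circularity step by step. Second, the base and spherical cases need separate, direct treatment: for $n=2$ a short angle-sum computation for convex polygons suffices; for $\matX^n=\matS^n$ every pair of hyperplanes already meets, so the claim is that all facets are pairwise adjacent, equivalently that $P$ is a simplex, which I would establish by a direct positivity argument on the Gram matrix; and throughout $\matH^n$ one must work in $\bar\matH^n$ and check that whenever $\bar F_i$ and $\bar F_j$ share only an ideal point the hyperplanes are parallel, i.e. $G_{ij}=-1$, so that no counterexample is created at infinity.
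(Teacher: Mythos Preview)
Your approach is different from the paper's and, as you yourself acknowledge, leaves the hard part --- the ``bending case'' --- unresolved. You correctly identify that the clean case (both feet in the interiors of $F_i,F_j$) is easy, but the entire content of the theorem lives in the degenerate case where a foot lies on a lower-dimensional face, and there you only gesture at a descent via first-order conditions and links without carrying it out. The circularity you worry about is less severe than you think: the computation in the proof of Proposition~\ref{face:non-obtuse:prop} only uses $\langle v_i,v_1\rangle\le0$ and $\langle v_j,v_1\rangle\le0$, and these come from genuine ridges of $P$ (since $F_i\cap F_1$ and $F_j\cap F_1$ are facets of $F_1$), so one does get at once that facets of $P$ have dihedral angles $\le\pi/2$ from the hypothesis alone --- the paper in fact opens with exactly this remark. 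But that observation does not by itself complete your descent. Also, your ``uniform across the three geometries'' claim for the clean case is false in $\matS^n$: two intersecting great hyperspheres \emph{do} admit common perpendicular segments of positive length (the equator is perpendicular to any two meridians), so the existence of such a segment is no contradiction there; you are right that $\matS^n$ must be handled separately.

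The paper avoids the bending difficulty altogether by a different induction. After settling $\matS^n$ directly (showing by induction on $n$ that $P$ is a simplex), it allows infinite-volume polyhedra in $\matR^n,\matH^n$ and inducts on both $n$ and the number $k$ of facets. The geometric core is to rule out one explicit configuration: two disjoint facets $F_i,F_j$ both adjacent to a third $F_h$, with $\partial H_i\cap\partial H_j$ poking out of $H_h$ (Figure~\ref{nonobtuse:fig}). This is killed by finding, via Exercise~\ref{ortho:ex}, a $2$-plane or horosphere orthogonal to all three hyperplanes, in which they cut out a triangle with angle sum $>\pi$. Once that configuration is forbidden, removing any half-space $H_h$ gives a polyhedron $P_h=\bigcap_{i\neq h}H_i$ whose ridges are all already ridges of $P$, hence still acute; the inductive hypothesis on $k-1$ facets then handles every pair $F_i,F_j$ with $i,j\neq h$, and varying $h$ covers everything. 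The induction on $k$ thus plays the structural role your bending analysis was meant to play, but without ever examining distance minima on faces.
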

\begin{proof}
We start by noting that the proof of Proposition \ref{face:non-obtuse:prop} applies to this context and shows that the dihedral angles of the facets of $P$ are also $\leq \pi/2$. 

We first consider the spherical case. We prove by induction on $n = \dim P$ that $P$ is in fact a simplex. If $n=2$ we get a triangle since the angles of a $k$-gon in $\matS^2$ sum to $>\pi(k-2)$. For general $n\geq 3$, by the induction hypothesis every link and every facet of $P$ is a simplex, and this easily implies that $P$ is a simplex.

We turn to the geometries $\matX^n = \matR^n, \matH^n$. It is convenient to exceptionally allow polyhedra to have infinite volume, and to prove the assertion in this more general setting, by induction on the dimension $n$ and the number $k$ of facets of $P$. By what already proved in the spherical setting, the links of all the points are simplexes, so in particular two facets may intersect only in a ridge.

The polyhedron $P$ has some facets $F_1,\ldots, F_k$ and is the intersection of half-spaces $H_1,\ldots, H_k$. Suppose that there are two disjoint facets $F_i, F_j$ adjacent to the same $F_h$, such that $\partial H_i \cap \partial H_j \cap (\matH^n \setminus H_h) \neq \emptyset$ as in Figure \ref{nonobtuse:fig}. This is the key configuration that we want to rule out.

\begin{figure}
 \begin{center}
\centering
\labellist
\small\hair 2pt
\pinlabel $P$ at 90 20
\pinlabel $F_h$ at 92 64
\pinlabel $F_i$ at 10 28
\pinlabel $F_j$ at 160 40
\pinlabel $P'$ at 90 95
\pinlabel $\partial H_i$ at 60 116
\pinlabel $\partial H_j$ at 125 100
\endlabellist
  \includegraphics[width = 5 cm]{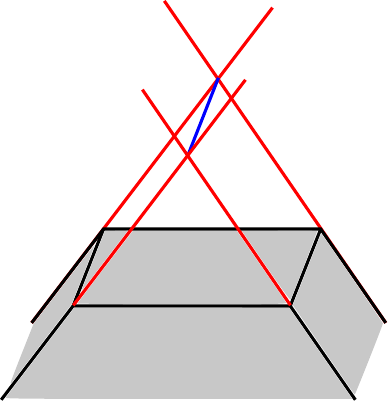}
 \end{center}
 \caption{A facet $F_h$ and two incident facets $F_i,F_j$ of $P$ such that $\partial H_i \cap \partial H_j \cap (\matH^n \setminus H_k) \neq \emptyset$. This is the key configuration that we want to rule out while proving Theorem \ref{nonobtuse:teo}. The dihedral angles of $P$ are non-obtuse, and this leads to a contradiction.} \label{nonobtuse:fig}
\end{figure}

The figure also shows the polyhedron $P' = H_i\cap H_j \cap (\matH^n \setminus \interior {H_h})$, that has three facets and three ridges. If $n=2$ then $P'$ is a triangle whose inner angles sum to $> \pi$, a contradiction. If $n\geq 3$, by the induction hypothesis on $F_h$ the supporting subspaces $\partial H_h \cap \partial H_i, \partial H_h \cap \partial H_j$ of the ridges $F_h\cap F_i, F_h\cap F_j$ do not intersect (since the ridges do not). By Exercise \ref{ortho:ex} there is either a horosphere or a plane that is orthogonal to $\partial H_h, \partial H_i, \partial H_j$, and in both cases the three hyperplanes bound a triangle there with inner angles $>\pi$, a contradiction. 

For every $h\leq k$ we consider the (possibly infinite volume) polyhedron with $k-1$ facets $P_h = \cap_{i\neq h} H_i$. By what just proved, every ridge of $P_h$ is contained in a ridge of $P$. Therefore the dihedral angles of $P_h$ are all $\leq \pi/2$, and by our induction hypothesis if two facets of $P_h$ are disjoint then their supporting hyperplanes are. Therefore the same holds for $P$ for every pair of facets $F_i,F_j$ with $i,j \neq h$. Since $h$ is arbitrary, this holds for every pair $i,j$. The proof is complete.
\end{proof}

We note that the closures in $\bar \matH^n$ of two disjoint faces of a polyhedron $P\subset \matH^n$ could intersect at infinity at an ideal point. This happens for instance for many pairs of faces in a regular ideal octahedron. 
Two such faces are nevertheless considered to be disjoint here, following our definitions.

\subsection{Principal submatrices}
Let $P\subset \matX^n$ be a polyhedron, with Gram matrix $G$. We can infer much of the geometry of $P$ by looking at the principal matrices of $G$.

Let $F_1,\ldots, F_k$ be the facets of $P$. A face $f$ of $P$ determines a principal submatrix $G_f$ of $G$ consisting of the $G_{ij}$ such that the facets $F_i$ and $F_j$ contain $f$. 
Similarly, an ideal vertex $v$ of $P$ determines a principal submatrix $G_v$ of $G$ consisting of the $G_{ij}$ such that the closures in $\bar\matH^n$ of the facets $F_i$ and $F_j$ contain $v$.

\begin{prop} \label{Gf:prop}
The matrices $G_f$, $G_v$ are the Gram matrices of the links of $f$, $v$. 
\end{prop}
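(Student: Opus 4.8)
The plan is to compute, for each link, the outward unit normal vectors of its facets inside the Euclidean model space that hosts the link, and to check that their pairwise products reproduce the relevant entries of $G$. Denote by $F_{i_1},\dots,F_{i_m}$ the facets of $P$ whose closures contain the face $f$ (resp. the ideal vertex $v$), with outward unit normals $v_{i_1},\dots,v_{i_m}$; thus $G_f$ (resp. $G_v$) is the $m\times m$ matrix with entries $\langle v_{i_a},v_{i_b}\rangle$. Since a facet $F_{i_a}$ meets the link in a facet of the link and every facet of the link arises in this way, it suffices to show that the outward unit normals of the link are, after the natural isometric identification of its model space, exactly the vectors $v_{i_a}$.

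For a genuine face $f$ of dimension $d$, fix $x\in\interior{f}$ and let $N$ be the $(n-d)$-subspace through $x$ meeting $f$ orthogonally, so that the link lives in a small sphere $\matS^{n-d-1}\subset N$ centred at $x$. In the models $\matX^n=\matS^n,\matH^n$ the point $x$ is a unit (resp. unit timelike) vector and the induced product on $x^\perp$ is positive definite; in $\matR^n$ one argues identically using direction spaces. Because $x\in F_{i_a}\subset\partial H_{i_a}$ we have $\langle v_{i_a},x\rangle=0$, so $v_{i_a}\in x^\perp$, and because $F_{i_a}\supset f$ the vector $v_{i_a}$ is orthogonal to the tangent directions of $f$; hence $v_{i_a}$ lies in the Euclidean subspace $T_xN=(T_xf)^\perp\cap x^\perp$ that models $\matS^{n-d-1}$. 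There $v_{i_a}$ is still a unit vector, it is orthogonal to the trace of $F_{i_a}$, and it points outward from $P$, so it is precisely the outward unit normal of the corresponding facet of the link. The Gram matrix of the link therefore has entries $\langle v_{i_a},v_{i_b}\rangle=G_{i_ai_b}$, which is $G_f$.

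The ideal case is the delicate one, and the main obstacle, because the relevant model space is not an orthogonal complement but a quotient by a null line. Write the ideal vertex as $v=[w]$ with $w$ a future-directed null vector and realize the link on a horosphere $O_t=\{\,x\mid\langle x,w\rangle=t\,\}$, $t<0$, isometric to $\matR^{n-1}$. Since $w$ is null we have $w\in w^\perp$, the product restricted to $w^\perp$ is positive semidefinite with kernel $\matR w$, and the quotient $w^\perp/\matR w$ is Euclidean of dimension $n-1$; crucially, the product descends to it. A facet $F_{i_a}$ through $v$ satisfies $\langle v_{i_a},w\rangle=0$, so $v_{i_a}\in w^\perp$ and has image $\bar v_{i_a}\in w^\perp/\matR w$ with $\langle\bar v_{i_a},\bar v_{i_b}\rangle=\langle v_{i_a},v_{i_b}\rangle$. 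It remains to identify $w^\perp/\matR w$ isometrically with the tangent spaces of $O_t$ and to match the normals: for $p\in O_t$ one has $T_pO_t=p^\perp\cap w^\perp$, which meets $\matR w$ trivially since $\langle w,p\rangle=t\neq 0$, so the projection $T_pO_t\to w^\perp/\matR w$ is an isometric isomorphism, compatible as $p$ varies with the flat structure of $O_t$. Finally, if $p$ lies on the trace of $F_{i_a}$ then $\langle v_{i_a},p\rangle=0$, so $v_{i_a}\in T_pO_t$ is already a unit vector orthogonal to that trace and pointing outward, hence the outward normal there, mapping to $\bar v_{i_a}$. Consequently the Gram matrix of the Euclidean link is $\big(\langle\bar v_{i_a},\bar v_{i_b}\rangle\big)=G_v$, and the point that makes everything work is simply that $\matR w$ is exactly the kernel of the restricted product, so the ambient Lorentzian products of the normals survive the quotient unchanged.
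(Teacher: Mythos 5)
Your proof is correct and follows essentially the same route as the paper: in both cases one identifies the Euclidean model space hosting the link (the positive definite complement of the face, resp.\ the horosphere modelled on $w^\perp/\matR w$) and checks that the outward unit normals of the link's facets are the vectors $v_{i_a}$ themselves, so their pairwise products are the entries of $G_f$ or $G_v$. Your treatment of the ideal vertex via the quotient by the null line is just a coordinate-free rephrasing of the paper's ``parallel affine copy of $W$'' argument.
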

\begin{proof}
We first consider $f$. The unit normal vectors $v_j$ such that $F_j$ contains $f$ span a positive definite vector space $W$ whose orthogonal $W^\perp$ contains $f$ (or a parallel copy of it if $\matX^n = \matR^n$), and the link of $f$ can be realized in the unit sphere of $W$ as a polyhedron with the same normal vectors $v_j$.

We turn to $v$. Here $\matX^n = \matH^n$. The unit normal vectors $v_j$ such that $\bar F_j$ contains $v$ span a positive semi-definite vector space $W$ whose orthogonal $W^\perp$ is the light ray $v$. A parallel affine copy $W'$ of $W$ intersects $\matH^n$ in a horosphere, and the link of $V$ can be realized in $W' \cap \{x_1=0\}$ as a polyhedron with normal vectors $v_j$.
\end{proof}

We now apply Theorem \ref{non-obtuse:teo} and deduce the following.

\begin{cor}
If $P\subset \matH^n$ is a non-obtuse polyhedron, the links of all its faces and ideal vertices are also non-obtuse. In particular these are spherical simplexes and products of Euclidean simplexes, respectively.
\end{cor}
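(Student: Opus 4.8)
The plan is to reduce everything to the two results already in hand: Proposition \ref{Gf:prop}, which identifies the Gram matrix of each link as a principal submatrix of $G$, and Theorem \ref{non-obtuse:teo}, which classifies non-obtuse polyhedra in the spherical and Euclidean geometries. The corollary then follows by a short bookkeeping argument on matrix entries.

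First I would fix a face $f$ (or an ideal vertex $v$) of $P$ and invoke Proposition \ref{Gf:prop} to describe its link as a polyhedron whose Gram matrix is the principal submatrix $G_f$ (resp.\ $G_v$) of $G$. Here it is worth keeping track of the ambient geometry: the link of an ordinary $k$-face lives in $\matS^{n-k-1}$, hence is spherical, while the link of an ideal vertex lives in $\matR^{n-1}$, hence is Euclidean. The crucial observation is then that non-obtuseness is automatically inherited by principal submatrices: the entries of $G_f$ form a subset of the entries of $G$, with the diagonal entries still equal to $1$ and every off-diagonal entry of $G_f$ equal to some off-diagonal $G_{ij} \leq 0$. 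Thus $(G_f)_{ab} \leq 0$ for all $a \neq b$, which is exactly the defining condition (1) for the link to be non-obtuse, and the identical argument applies to $G_v$.

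For the final sentence I would simply feed this back into Theorem \ref{non-obtuse:teo}. Since the link of an ordinary face is a non-obtuse spherical polyhedron, it is a simplex; since the link of an ideal vertex is a non-obtuse Euclidean polyhedron, it is a product of simplexes. No iteration over faces is required here, because Proposition \ref{Gf:prop} already treats every face and every ideal vertex uniformly through its associated principal submatrix.

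I do not expect a genuine obstacle: all the real work is contained in Theorem \ref{non-obtuse:teo} and Proposition \ref{Gf:prop}. The only points demanding any care are purely notational — distinguishing that links of finite faces are spherical (yielding simplexes) while links of ideal vertices are Euclidean (yielding products of simplexes) — together with the trivial but essential remark that deleting rows and columns of a symmetric matrix cannot turn a nonpositive off-diagonal entry into a positive one.
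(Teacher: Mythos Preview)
Your proposal is correct and is precisely the intended argument: the paper gives no explicit proof for this corollary, merely prefacing it with ``We now apply Theorem \ref{non-obtuse:teo} and deduce the following,'' so the deduction you spell out---principal submatrices inherit the non-obtuse condition via Proposition \ref{Gf:prop}, then Theorem \ref{non-obtuse:teo} forces the spherical and Euclidean links to be simplexes and products of simplexes---is exactly what is meant.
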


\begin{cor}
Every non-obtuse polyhedron $P\subset \matH^n$ is \emph{simple}, that is every $h$-dimensional face is contained in exactly $n-h$ facets.
\end{cor}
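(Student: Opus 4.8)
The plan is to determine the local combinatorics of $P$ at each face directly from the link of that face, invoking the two preceding corollaries. Since $P$ is compact it has no ideal vertices, so every face $f$ is real and its link is an honest spherical polyhedron. If $f$ has dimension $h$, then by definition its link $L$ lives in $\matS^{n-h-1}$, and by the Corollary above $L$ is non-obtuse; hence by Theorem \ref{non-obtuse:teo} it is a \emph{simplex}.

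Now I would simply count facets. A simplex in $\matS^{n-h-1}$ has exactly $(n-h-1)+1=n-h$ facets by Exercise \ref{simplex:ex}. The key point is that these are in canonical bijection with the facets of $P$ that contain $f$: by Proposition \ref{Gf:prop} the Gram matrix of $L$ equals $G_f$, whose rows are indexed precisely by the $F_j$ with $f\subset F_j$, and the number of rows of the Gram matrix of a polyhedron is its number of facets. Hence there are exactly $n-h$ facets of $P$ through $f$, which is the assertion that $P$ is simple. The extreme cases $h=n-1$ and $h=n$ are trivial (a facet lies in only itself, and $P$ in no facet), so one only needs to run the argument for $h\leq n-2$, where $\matS^{n-h-1}$ has nonnegative dimension.

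The step needing the most care is the bijection between the facets of $L$ and the facets of $P$ through $f$ — that each $F_j\supset f$ contributes exactly one facet to $L$, with distinct facets arising from distinct $F_j$ and no redundancy. This is exactly what Proposition \ref{Gf:prop} packages, so once it is in hand the count is immediate. The indispensable hypothesis is compactness: it forces every link to be spherical, hence a simplex, whereas at an ideal vertex the link would only be Euclidean (a product of simplexes by the Corollary), and there simplicity can genuinely fail.
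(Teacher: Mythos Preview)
Your argument is correct and is precisely the intended one: the paper states this as an immediate corollary of the fact that the link of each face of a non-obtuse polyhedron is a spherical simplex, and your proof simply spells out the facet count via Proposition~\ref{Gf:prop} and Exercise~\ref{simplex:ex}.
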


We should warn the reader that if a non-obtuse polyhedron $P\subset \matH^n$ is not compact then its compactification $\bar P$ is not necessarily simple, because an ideal vertex may be contained in the closures of more than $n$ facets: the ideal regular octahedron is an example.
The following definition is crucial.

\begin{defn}
Let $P\subset \matX^n$ be a polyhedron with Gram matrix $G$. A principal submatrix of $G$ is 
\begin{itemize}
\item \emph{spherical} if it is positive definite;
\item \emph{Euclidean} if it has rank $n-1$ and decomposes into undecomposable matrices, each of signature $(k,0,1)$ for some $k>0$.
\end{itemize}
\end{defn}

\begin{lemma} \label{bijection:lemma}
Let $P \subset \matH^n$ be a non-obtuse polyhedron. The assignments $f \to G_f$ and $v \to G_v$ yield a bijective correspondence between the faces and ideal vertices of $P$ and the spherical and Euclidean principal submatrices of $G$.
\end{lemma}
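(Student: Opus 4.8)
The plan is to establish three things in turn: that $f\mapsto G_f$ and $v\mapsto G_v$ take values in the spherical and the Euclidean submatrices respectively, that they are injective, and that they are surjective — the last being the real content. For the first two I would argue as follows. By Proposition~\ref{Gf:prop} the matrices $G_f,G_v$ are the Gram matrices of the links of $f$ and $v$, and by the corollary that these links are again non-obtuse they are a spherical simplex and a Euclidean product of simplexes respectively (Theorem~\ref{non-obtuse:teo}). Reading off signatures then settles the claim: a simplex in $\matS^m$ has Gram matrix of signature $(m+1,0,0)$, hence positive definite, so $G_f$ is spherical; a product of Euclidean simplexes in $\matR^{n-1}$ has a block-diagonal Gram matrix with one block of signature $(k,0,1)$ per factor and total rank $n-1$, which is exactly the definition of Euclidean, so $G_v$ is Euclidean.

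Injectivity is then formal. In the Klein model $\bar P$ is a convex Euclidean polytope, and every face of it — including the ideal vertices, which are precisely its vertices on $\partial\matH^n$ — equals the intersection of the facets containing it; hence a face or ideal vertex is determined by the set of rows and columns that its Gram submatrix occupies in $G$. Since a fixed index set yields either a positive definite or a degenerate submatrix, but not both, a face and an ideal vertex are never sent to the same submatrix, and the combined map is injective.

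The surjectivity I would prove by induction on $n=\dim P$, treating the two classes separately. For a spherical $G_I$, positivity of its $2\times2$ principal minors forces $|G_{ij}|<1$, so by the Gram dictionary the hyperplanes $\partial H_i,\partial H_j$ are incident and by Theorem~\ref{nonobtuse:teo} the facets $F_i,F_j$ meet; fixing $i_0\in I$, this makes every $F_i$ with $i\in I':=I\setminus\{i_0\}$ a facet of the non-obtuse polyhedron $F_{i_0}\subset\matH^{n-1}$. The identity $\langle v_i',v_j'\rangle=G_{ij}-G_{ii_0}G_{ji_0}$ from the proof of Proposition~\ref{face:non-obtuse:prop} is exactly the Schur complement of the pivot $G_{i_0i_0}=1$, so positive definiteness descends to the Gram submatrix of $F_{i_0}$ on $I'$. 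The inductive hypothesis then produces a face $g$ of $F_{i_0}$ whose index set within $F_{i_0}$ is exactly $I'$; since a facet $F_j$ of $P$ contains $g$ iff $F_{i_0}\cap F_j$ is a facet of $F_{i_0}$ containing $g$, i.e. iff $j\in I'$, we get $I(g)=\{i_0\}\cup I'=I$ and $G_g=G_I$, closing this case.

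I expect the Euclidean case to be the main obstacle. Here $G_I$ is positive semidefinite and degenerate, so the normals $\{v_i\}_{i\in I}$ span a subspace $W\subset\matR^{n,1}$ on which the form is positive semidefinite with one-dimensional radical a light ray $\ell$; I would set $\xi=[\ell]\in\partial\matH^n$ and note that $\ell\perp v_i$ for every $i\in I$, so each $\partial H_i$ is asymptotic to $\xi$. Intersecting with a horosphere at $\xi$ realizes, via Proposition~\ref{Gf:prop}, the Euclidean polyhedron with Gram matrix $G_I$ — a nonempty compact product of simplexes that is the candidate link of $\xi$. The remaining and hardest point is to show $\xi\in\bar P$ with $I(\xi)=I$: that $\xi$ lies on $\bar F_i$ for $i\in I$ is visible from this link, but ruling out that some further facet $F_j$ with $j\notin I$ separates $\xi$ from $P$ is the ideal-vertex analogue of the key configuration excluded in Theorem~\ref{nonobtuse:teo}. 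Controlling the global position of $\xi$ relative to all the remaining half-spaces, rather than merely pairwise, is where the non-obtuse hypothesis must be exploited again — via Exercise~\ref{ortho:ex} and a triangle-with-angle-sum-exceeding-$\pi$ contradiction — and this is the step I would budget the most effort for.
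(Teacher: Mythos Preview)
Your treatment of the forward map and of injectivity is fine, and your spherical surjectivity argument by induction on $n$ via the Schur complement is a legitimate alternative to the paper's direct method. The paper instead projects $P$ orthogonally onto $S=W^\perp\cap\matH^n$, where $W$ is spanned by the $v_j$ with $j\in J$, and verifies $\pi(P)\subset P$ by writing $\pi$ explicitly and using that $H^{-1}=I+B+B^2+\cdots\geq 0$ when $H=I-B$ with $B\geq 0$ and $\|B\|<1$. Both approaches work; yours trades a clean global projection for a recursive descent through facets, which is slightly more bookkeeping but perfectly sound.

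The Euclidean case, however, has a genuine gap, and the missing idea is simpler than the geometric route you are bracing for. You correctly identify the light ray as the radical of the form on $W=\mathrm{span}\{v_j:j\in J\}$, but you do not see how to pin down a generator of that radical explicitly. The paper does this with Perron--Frobenius: writing $H=I-B$ with $B\geq 0$ indecomposable on each block, the largest eigenvalue of $B$ is $1$ with a \emph{strictly positive} eigenvector $w$, so $Hw=0$. Setting $v=\sum_{j\in J} w_j v_j$, one gets for every $i\notin J$
\[
\langle v, v_i\rangle=\sum_{j\in J} w_j G_{ji}\leq 0,
\]
because $w_j>0$ and $G_{ji}\leq 0$ by non-obtuseness. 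This one-line inequality is exactly the statement that $[v]$ lies on the correct side of every remaining half-space, so $[v]\in\bar P$ with no need for Exercise~\ref{ortho:ex} or angle-sum contradictions. You were right that the non-obtuse hypothesis is what is needed, but it enters algebraically through the signs of $G_{ji}$, not through a configuration argument.

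You also do not address the other half of $I(\xi)=I$: why no facet $F_i$ with $i\notin J$ passes through $[v]$. The paper handles this by a rank count: $G_{[v]}$ is the Gram matrix of a Euclidean product of simplexes, so it decomposes into blocks of signature $(n_i,0,1)$ with $\sum n_i=n-1$; since $H$ is already Euclidean with the same total rank $n-1$ and each indecomposable block of $H$ sits inside some block of $G_{[v]}$, the inclusion $H\subset G_{[v]}$ must be an equality. Without this step your bijection is not complete.
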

\begin{proof}
The submatrices $G_f$ and $G_v$ are Gram matrices of a non-obtuse spherical and flat polyhedron, that is a simplex and a product of simplexes by Theorem \ref{non-obtuse:teo}. Therefore they are spherical and Euclidean. 

Conversely, suppose that by selecting a set $J \subset \{1,\ldots, k\}$ of rows and columns we get a spherical principal submatrix $H$. The vectors $v_j$ with $j \in J$ are thus independent and span a positive definite subspace $W$, and $S = W^\perp \cap \matH^n$ is a subspace. We now prove that $f=S\cap P$ is a face of $P$ with $G_f = H$.

The orthogonal projection $\pi\colon \matR^{n,1} \to W^\perp$ is
$$\pi(x) = x-\sum_{j,l\in J} (H^{-1})_{jl} \langle x, v_{j} \rangle v_{l}.$$
To show this, note that
$$\langle \pi(x), v_{i} \rangle = 
\langle x, v_{i} \rangle - \sum_{j,l\in J} (H^{-1})_{jl} \langle x, v_{j} \rangle H_{li} =
\langle x, v_{i} \rangle - \langle x, v_{i} \rangle = 0
$$
for every $i\in J$ and therefore $\pi(x) \in W^\perp$.
The projection $\pi$ induces an orthogonal projection $\pi\colon \matH^n \to S$ that is of the same form up to renormalizing. A point $x \in \matH^n$ lies in $P$ if and only if 
$$\langle x, v_i \rangle \leq 0$$
for all $i=1,\ldots, k$. If this holds, then 
$$\langle \pi(x), v_i \rangle =
\langle x-\sum_{j,l\in J} (H^{-1})_{jl} \langle x, v_{j} \rangle v_{l}, v_i \rangle
= \langle x, v_i \rangle -\sum_{j,l \in J} (H^{-1})_{jl} \langle x, v_{j} \rangle G_{li}.
$$

We note that $H = I - B$ for some $B\geq 0$ with largest eigenvalue $<1$ because $H$ is positive definite. Therefore $\|B\|<1$ and $H^{-1} = I + B + B^2 + \cdots$, therefore $H^{-1} \geq 0$. We deduce that $\langle \pi(x), v_i \rangle \leq 0$ for every $i\not \in J$, we already know that $\langle \pi(x), v_i \rangle = 0$ for all $i\in J$, and therefore $\pi(x) \in P$. This implies that $\pi(P) \subset P$. In particular $f = S \cap P = \pi(P)$ is not empty and is hence a face of $P$ with $G_f = H$.

Finally, let a set $J \subset \{1,\ldots, k\}$ of rows and columns provide a Euclidean principal submatrix $H$. We have $H = I-B$ with $B\geq 0$ that decomposes into indecomposable matrices, each with largest eigenvalue $\lambda = 1$. By the Perron -- Frobenius Theorem there is a $w>0$ with $Bw=w$ and hence $Hw=0$. The vector
$$ v = \sum_{j \in J} w_j v_j$$
is orthogonal to each $v_j$ with $j \in J$. Therefore $v \in W \cap W^\perp$ is isotropic, where $W$ is the space generated by $v_j$ with $j \in J$. We have $v\neq 0$, since for any interior point $x \in P$ we have $\langle x, v_j \rangle < 0$ and hence $\langle x,v \rangle < 0$. Therefore $[v] \in \partial \matH^n$.

We have $\langle v, v_i \rangle \leq 0$ for all $i\not\in J$. Therefore $[v]$ is an ideal vertex of $P$. It remains to prove that the facets incident to $[v]$ are precisely the $F_j$ with $j\in J$. We know that each $F_j$ is incident to $[v]$, hence $H$ is a principal submatrix of $G_{[v]}$, and we want to prove that $H=G_{[v]}$. 
From Theorem \ref{non-obtuse:teo} we know that $G_{[v]}$ decomposes into some $h$ matrices $G_{[v]}^i$ of signature $(n_i,0,1)$, with $n_1+\cdots+n_h = n-1$. The matrix $H$ is Euclidean and hence it also decomposes into some $h'$ matrices $H^i$ of signature $(n_i',0,1)$ with $n_1' + \cdots + n_{h'}' = n-1$. Each $H^i$ is a submatrix of some $G_{[v]}^i$. We deduce that $H=G_{[v]}^i$.
\end{proof}

This lemma is very important because it tells us that the whole combinatorial structure of $P$ may be deduced from its Gram matrix $G$.
In the proof we have also shown this interesting geometric fact.

\begin{prop}
Let $f$ be a face of a non-obtuse polyhedron $P \subset \matH^n$. The orthogonal projection $\pi\colon \matH^n \to S$ onto the subspace $S$ containing $f$ sends $P$ to $f$.
\end{prop}

\subsection{Vinberg's Realization Theorem}
A theorem of Vinberg \cite{V} characterizes completely the Gram matrices of non-obtuse hyperbolic polyhedra. 


\begin{teo} \label{realization:teo}
A symmetric $k\times k$ matrix $G$ is the Gram matrix of a non-obtuse polyhedron $P\subset \matH^n$ with $k$ facets if and only if $G_{ii} =1, G_{ij} \leq 0$ for all $i\neq j$, $G$ has signature $(n,1,k-n-1)$, and moreover:
\begin{enumerate}
\item $G$ contains at least one spherical submatrix of rank $n$;
\item Each spherical submatrix of rank $n-1$ is contained in 2 distinct submatrices of $G$, each of which is either spherical of rank $n$ or Euclidean.
\end{enumerate}
The polyhedron $P$ is uniquely determined by $G$ up to isometries of $\matH^n$.
\end{teo}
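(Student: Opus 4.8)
The plan is to prove the two implications separately, to dispose of the normalizations and of uniqueness by linear algebra, and to concentrate the real work on showing that the two combinatorial conditions suffice. For necessity, suppose $P$ is a non-obtuse polyhedron with Gram matrix $G$. The relations $G_{ii}=1$ and $G_{ij}\leq 0$ hold by definition, and the signature $(n,1,k-n-1)$ was already recorded when the Gram matrix was introduced, since the finite-volume hypothesis forces the normals $v_i$ to span $\matR^{n,1}$. Conditions (1) and (2) are then read off from the dictionary of Lemma \ref{bijection:lemma}: a spherical submatrix of rank $r$ corresponds to a face of dimension $n-r$, and a Euclidean submatrix of rank $n-1$ to an ideal vertex. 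Thus a spherical submatrix of rank $n$ is a vertex and one of rank $n-1$ is an edge, so (1) records that $P$ possesses a vertex and (2) that every edge is a segment with exactly two endpoints, each of them real or ideal, both of which are elementary incidence facts about finite-volume polyhedra.

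For sufficiency I would first realize $G$ geometrically. Since $G$ is symmetric of signature $(n,1,k-n-1)$, there exist vectors $v_1,\ldots,v_k$ spanning $\matR^{n,1}$ with $\langle v_i,v_j\rangle = G_{ij}$; this is the standard fact that a symmetric matrix is the Gram matrix of a spanning family of vectors in the pseudo-Euclidean space whose signature it prescribes. Setting $H_i=\{x:\langle x,v_i\rangle\leq 0\}$ and $P=H_1\cap\cdots\cap H_k\subset\matH^n$, it then remains to prove that $P$ is a polyhedron with exactly $k$ facets and Gram matrix $G$; non-obtuseness is automatic from $G_{ij}\leq 0$.

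The crux is to show that $P$ really has the prescribed combinatorics: nonempty interior, finite volume, and no redundant half-space. I would argue by induction on $n$, reconstructing the face structure from the inside out and letting the two conditions drive the bookkeeping. Condition (1) provides a spherical submatrix of rank $n$, whose $n$ vectors are independent and positive definite; exactly as in the proof of Lemma \ref{bijection:lemma}, where the inequality $H^{-1}\geq 0$ forces the relevant orthogonal projections to land back inside $P$, these cut out a genuine vertex $x_0$ whose link is a spherical simplex. Condition (2) is then the propagation rule: each rank-$(n-1)$ spherical submatrix through a vertex is flanked on its two sides by a second spherical rank-$n$ submatrix or by a Euclidean rank-$(n-1)$ one, meaning the associated edge terminates at another real vertex or runs out to an ideal vertex, with the trichotomy incident/parallel/ultraparallel governed by Exercise \ref{ortho:ex}. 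Passing to links and facets via Proposition \ref{Gf:prop} feeds the induction, and the finiteness of $k$ forces the resulting locally finite convex region to close up into a finite-volume polyhedron whose facets are precisely the $\partial H_i$. This combinatorial reconstruction is where I expect the main obstacle to lie: proving that the $k$ half-spaces meet in a set of nonempty interior and finite volume with every $\partial H_i$ a genuine facet, rather than in a degenerate, lower-dimensional, or infinite-volume region, or one missing walls at infinity, is exactly the point where both conditions must be used in tandem and where the induction on dimension carries the full weight.

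Uniqueness is then immediate: if $\{v_i\}$ and $\{w_i\}$ are two spanning realizations of the same $G$ in $\matR^{n,1}$, the assignment $v_i\mapsto w_i$ extends to a linear isometry of $\matR^{n,1}$, because a spanning configuration is determined by its Gram matrix up to an isometry of the ambient quadratic space; this isometry preserves the hyperboloid and carries one polyhedron to the other.
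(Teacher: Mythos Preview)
Your treatment of necessity and of uniqueness matches the paper's. The difficulty is entirely in the sufficiency direction, and here your sketch contains a genuine circularity and diverges from the paper's method.

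The circularity is this: you propose to locate a first vertex $x_0$ by invoking the projection argument from Lemma~\ref{bijection:lemma}, but that argument \emph{starts} from a point already known to lie in $P$ and projects it onto the subspace $S=W^\perp\cap\matH^n$. With only the abstract matrix $G$ in hand you do not yet know that $P=\bigcap H_i$ meets $\matH^n$ at all; the candidate point $W^\perp\cap\matH^n$ determined by a spherical rank-$n$ submatrix $J$ satisfies $\langle x_0,v_j\rangle=0$ for $j\in J$, but there is no a~priori reason the remaining $\langle x_0,v_i\rangle$ are $\leq 0$. The paper resolves this differently: writing $G=I-B$ with $B\geq 0$, Perron--Frobenius supplies a lowest eigenvalue $\lambda<0$ of $G$ with eigenvector $w\geq 0$, and the combination $v=\sum_j w_jv_j$ is then timelike with $\langle v,v_i\rangle=\lambda w_i\leq 0$ for every $i$, producing an interior point of $P$ directly. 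Only after this does the projection argument (applied to one hyperplane at a time) show that each $\partial H_i$ is a genuine facet.

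For finite volume the paper does not induct on $n$ either. It passes to the projective picture, setting $\hat P=\{[x]\in\matRP^n:\langle x,v_i\rangle\leq 0\}$, which is an ordinary Euclidean polyhedron in an affine chart, and observes that $P=\hat P\cap\matH^n$ has finite volume precisely when the $1$-skeleton of $\hat P$ lies entirely in $\bar\matH^n$. This is a connectivity statement on the edge graph of $\hat P$: some vertex lies in $\bar\matH^n$ with a full neighbourhood there (condition~(1)), and every edge leaving such a vertex ends at another such vertex (condition~(2)). Lemma~\ref{bijection:lemma}, now legitimately applicable since $P$ has nonempty interior, translates these vertex and edge conditions into the language of spherical and Euclidean principal submatrices. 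Your edge-propagation intuition is correct, but it is run on the projective polytope $\hat P$ in a single step rather than through a dimensional induction, and it is not clear how your proposed induction would be organized, since Proposition~\ref{Gf:prop} gives Gram matrices of links (which are spherical or Euclidean), not of facets (which are hyperbolic).
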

\begin{proof}
Let $G$ be a $k\times k$ symmetric matrix with $G_{ii}=1$, $G_{ij} \leq 0$ for all $i\neq j$, and signature $(n,1,k-n-1)$. By linear algebra we can find some generators $v_1,\ldots, v_k \in \matR^{n,1}$ such that $G_{ij} = \langle v_i, v_j \rangle$ for all $i,j$. 
We have $G = I - B$ with $B\geq 0$. By the Perron -- Frobenius Theorem $G$ has a lowest eigenvalue $\lambda < 0$ with eigenvector $w\geq 0$. Set
$$v = \sum_{j=1}^k w_jv_j.$$
We have
$$\langle v, v_i \rangle = \sum_{j=1}^k w_j\langle v_j, v_i \rangle = (Gw)_i  = \lambda w_i \leq 0
$$
for all $i$, with a strict inequality for some $i$, hence $\langle v,v\rangle < 0$. Up to reversing all the vectors $v_j$ we may suppose that by rescaling $v$ we get a point in $\matH^n$. We define
$$P = \{x \in \matH^n\ |\ \langle x, v_i \rangle \leq 0 \}$$
and note that it has non-empty interior since it contains the rescaled $v$. The hyperplane $S_i = \{\langle x, v_i \rangle = 0\}$ intersects $P$ in a facet $F_i$: to show this, note that the orthogonal projection $\pi \colon \matH^n \to S_i$ is 
$$\pi(x) = x- \langle x,v_i \rangle v_i$$
and $x \in P$ easily implies $\pi(x) \in P$. Therefore $G$ is the Gram matrix of $P$. Since $v_1,\ldots, v_k$ are unique up to isometry, the matrix $G$ determines $P$.

It remains to prove that $P$ has finite volume if and only if the conditions (1) and (2) hold. We project $\matH^n$ inside $\matRP^n$ and define 
$$\hat P = \{[x] \in \matRP^n\ |\ \langle x, v_i \rangle \leq 0\}.$$

Since the vectors $v_i$ generate $\matR^{n,1}$, the subset $\hat P\subset \matRP^n$ is a polyhedron in some affine chart $\hat P \subset \matR^n \subset \matRP^n$. We have $P = \hat P \cap \matH^n$ and $P$ has finite volume precisely when $\hat P \setminus P$ consists of finitely many (possibly none) points (the ideal vertices of $P$). This holds if and only if the 1-skeleton of $\hat P$ is entirely contained in $\bar \matH^n$, and this is in turn equivalent to the following requirements: (1) $\hat P$ has at least one vertex in $\bar \matH^n$, with a neighbourhood entirely in $\bar \matH^n$, and (2) every edge of $\hat P$ departing from one vertex in $\bar \matH^n$ with a neighbourhood entirely in $\bar \matH^n$ must end in another such vertex in $\bar \matH^n$. By Lemma \ref{bijection:lemma} (whose proof does not require $P$ to have finite volume) a vertex in $\bar \matH^n$ with a neihbourhood entirely in $\bar\matH^n$ corresponds to either a spherical submatrix of rank $n$ or a Euclidean one of rank $(n-1)$, and an edge exiting from it to a spherical submatrix of rank $n-1$, so
we can rephrase (1) and (2) as stated.
\end{proof}

It is instructive to use points (1) and (2) to deduce the following.

\begin{ex} \label{indec:ex}
The Gram matrix $G$ of a non-obtuse $P\subset \matH^n$ is indecomposable.
\end{ex}

We also mention for completeness the flat and spherical cases, whose proof is simpler. In light of Theorem \ref{non-obtuse:teo} it suffices to consider simplexes. Let $n\geq 1$.

\begin{teo} \label{realization2:teo}
A symmetric $(n+1)\times (n+1)$ matrix is the Gram matrix of a non-obtuse simplex $P\subset \matR^n$ or $P\subset \matS^n$ if and only if $G_{ii} = 1, G_{ij} \leq 0$ for all $i\neq j$, and $G$ has signature $(n,0,1)$ or $(n+1,0,0)$ respectively. The simplex $P$ is uniquely determined by $G$ up to similarities of $\matR^n$ or isometries of $\matS^n$.
\end{teo}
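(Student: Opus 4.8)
The plan is to prove necessity and sufficiency separately, running the spherical and Euclidean cases in parallel but flagging where they diverge. Necessity is immediate from the general discussion of Gram matrices: a non-obtuse simplex has exactly $n+1$ facets, so its Gram matrix $G$ is $(n+1)\times(n+1)$, with $G_{ii}=1$ because the $v_i$ are unit normals and $G_{ij}=-\cos\alpha_{ij}\le 0$ because every dihedral angle is $\le\pi/2$; substituting $k=n+1$ into the signature formula for finite-volume polyhedra gives $(n+1,0,0)$ in $\matS^n$ and $(n,0,1)$ in $\matR^n$. All the real content is therefore in the sufficiency and uniqueness claims.

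For the spherical case I would argue directly. Since $G$ is positive definite, I choose $v_1,\dots,v_{n+1}\in\matR^{n+1}$ (for the standard positive definite product) with Gram matrix $G$; as $\det G\neq 0$ they form a basis. Setting $P=\{x\in\matS^n\ |\ \langle x,v_i\rangle\le 0\ \forall i\}$, the dual basis $u_1,\dots,u_{n+1}$ with $\langle u_i,v_j\rangle=-\delta_{ij}$ exhibits a strictly interior point $\sum_i u_i$, and because the $v_i$ are a basis the cone $\{\langle x,v_i\rangle\le 0\}$ is pointed and simplicial; intersecting with $\matS^n$ yields a spherical $n$-simplex, lying in a hemisphere, whose facet normals are exactly the $v_i$. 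Hence its Gram matrix is $G$ and it is non-obtuse. Uniqueness is clean here: $G$ determines the $v_i$ up to $O(n+1)$, i.e. up to an isometry of $\matS^n$.

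For the Euclidean case the novelty is that $G$ is only positive semidefinite, of rank $n$ with a one-dimensional kernel. I realize $v_1,\dots,v_{n+1}\in\matR^n$ with Gram matrix $G$; they span $\matR^n$ and, since the only relation $\sum c_jv_j=0$ forces $c\in\ker G$, they satisfy a single dependence relation. The crucial step is to make this relation \emph{positive}: writing $G=I-B$ with $B\ge 0$ and invoking Perron--Frobenius, the kernel is spanned by a vector $w>0$, whence $\sum_j w_jv_j=0$ with all $w_j>0$. This positivity says the normals positively span $\matR^n$, so pairing any recession direction $x$ with $\sum_j w_jv_j=0$ shows the region $P=\{x\ |\ \langle x,v_i\rangle\le a_i\}$ is bounded for every choice of offsets $a_i$; moreover strict positivity of $w$ forces every $n$ of the $v_i$ to be independent, so $P$ is a genuine $n$-simplex with all $n+1$ facets and Gram matrix $G$.

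The main obstacle is twofold and lives entirely in the Euclidean case. First, the strict positivity of the Perron--Frobenius eigenvector---hence the boundedness of $P$---requires $G$ to be indecomposable; this is precisely the line between a simplex and a product of lower-dimensional simplexes, and a decomposable $G$ of signature $(n,0,1)$, for instance the $3\times 3$ matrix built from a block $(1)$ and a block $\matr{1}{-1}{-1}{1}$, produces an unbounded degenerate region even though it satisfies every stated hypothesis. I would therefore record this reduction to the indecomposable (genuine simplex) case, as licensed by Theorem \ref{non-obtuse:teo}. Second, for uniqueness one must obtain $P$ up to \emph{similarity} even though $G$ records nothing about the offsets $a_i$: the point is that $G$ fixes the facet directions up to $O(n)$, and a simplex is determined up to similarity by its facet directions alone, since all admissible offset vectors $(a_i)$ yield simplexes with identical dihedral angles and the $(n+1)$-dimensional group of similarities sweeps out exactly this $(n+1)$-parameter family. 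Proving this last equivalence cleanly is the step I expect to require the most care.
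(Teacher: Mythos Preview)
Your argument is correct and in several places more careful than the paper's brief proof. What is interesting is that you and the paper deploy Perron--Frobenius in \emph{opposite} cases. The paper uses it in the spherical case to locate an interior point: writing $G=I-B$ with $B\ge 0$, the lowest eigenvalue $\lambda>0$ has an eigenvector $w\ge 0$, and $v=-\sum_j w_jv_j$ satisfies $\langle v,v_i\rangle=-\lambda w_i\le 0$, putting the rescaled $v$ inside $P=\{x\in\matS^n\mid\langle x,v_i\rangle\le 0\}$. Your dual-basis argument there is arguably cleaner and works uniformly regardless of whether $G$ is decomposable. Conversely, for the Euclidean case the paper simply sets $P=\{x\in\matR^n\mid\langle x,v_i\rangle\le 1\}$ and says nothing further, whereas you invoke Perron--Frobenius on the kernel of $G$ to extract the strictly positive relation $\sum_j w_jv_j=0$ and deduce boundedness and genuine simpliciality. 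Your flagging of the indecomposability caveat is apt: a decomposable $G$ of signature $(n,0,1)$ satisfies the stated hypotheses yet is not the Gram matrix of any simplex, and the paper's proof does not address this either. Your uniqueness discussion (facet directions determine a Euclidean simplex up to similarity, since the $(n+1)$-parameter family of offsets is swept out by translations and dilations) is the right idea and simply makes explicit what the paper leaves unsaid.
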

\begin{proof}
By linear algebra we can find some generators $v_j$ in $\matR^{n}$ or $\matR^{n+1}$ having Gram matrix $G$. In the Euclidean case we define $P = \{x \in \matR^n\ |\ \langle x, v_i \rangle \leq 1 \}$.
In the spherical case we have $G=I-B$ with $B\geq 0$, so by Perron Frobenius $G$ has lowest eigenvalue $\lambda > 0$ with eigenvector $w\geq 0$. We set
$v = -\sum_{j} w_jv_j$ and prove that
$\langle v, v_i \rangle = -\sum_{j=1}^k w_j\langle v_j, v_i \rangle = -(Gw)_i  = -\lambda w_i \leq 0
$ with a strict inequality for some $i$. Hence $P = \{x \in \matS^n\ |\ \langle x, v_i \rangle \leq 0 \}$ contains the rescaled $v$ in its interior.
\end{proof}

\section{Polyhedra in dimension 2 and 3} \label{poly23:section}
We apply the theory exposed in the previous section to list and study non-obtuse polyhedra in dimension $n=2$ and $n=3$. It turns out that in these dimensions the non-obtuse polyhedra are completely classified in all geometries. 

Recall that when we say that a polyhedron in $\matX^n$ is unique, we always mean up to isometry in $\matS^n$ and $\matH^n$, and up to similarities in $\matR^n$. 

\subsection{Polygons} Non-obtuse polygons are easily classified.

\begin{prop} \label{triangles:prop}
For every $0\leqslant \alpha, \beta, \gamma \leqslant \pi/2$ there is a unique triangle in $\matX^2$ with angles $\alpha, \beta, \gamma$, where $\matX^2 = \matH^2, \matR^2, \matS^2$ depends on whether the sum $\alpha+\beta+\gamma$ is smaller, equal to, or larger than $\pi$.
\end{prop}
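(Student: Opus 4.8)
The plan is to reduce everything to the $3\times 3$ Gram matrix and to read off the geometry from its signature, exactly in the spirit of Theorems \ref{realization:teo} and \ref{realization2:teo}. First I would observe that a triangle with the prescribed angles, should it exist, is a non-obtuse polygon with three edges, so its Gram matrix is the symmetric matrix $G$ with $G_{ii}=1$ and off-diagonal entries $-\cos\alpha,-\cos\beta,-\cos\gamma$: in $\matX^2$ the ridges are the vertices, the dihedral angle at a vertex is the interior angle there, and in a triangle every two edges are incident (at a real or ideal vertex), so $G_{ij}=-\cos(\text{angle at }F_i\cap F_j)$. Thus $G$ is completely determined by $\alpha,\beta,\gamma$ up to an irrelevant permutation of the edges, and since each off-diagonal entry lies in $[-1,0]$ the sign hypotheses of the realization theorems hold. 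Once I know which of the three signatures $G$ carries, existence follows from the appropriate realization theorem, and uniqueness follows because any triangle with these angles has this same $G$, which determines it up to isometry (or similarity in $\matR^2$).

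The crux is the determinant
\[
\det G = 1-\cos^2\alpha-\cos^2\beta-\cos^2\gamma-2\cos\alpha\cos\beta\cos\gamma,
\]
whose sign I claim obeys the trichotomy with $\pi$. To prove this I would read $\det G$ as a quadratic in $t=\cos\gamma$ with leading coefficient $-1$; using $(1-\cos^2\alpha)(1-\cos^2\beta)=\sin^2\alpha\sin^2\beta$ its two roots come out to $t=-\cos(\alpha+\beta)$ and $t=-\cos(\alpha-\beta)$, so $\det G>0$ exactly when $-\cos(\alpha-\beta)<\cos\gamma<-\cos(\alpha+\beta)$. Because $\alpha,\beta,\gamma\in[0,\pi/2]$, the lower inequality is automatic (as $\pi-|\alpha-\beta|\geq \pi/2\geq\gamma$), while $\cos\gamma<-\cos(\alpha+\beta)=\cos(\pi-\alpha-\beta)$ is equivalent, by monotonicity of cosine on $[0,\pi]$, to $\gamma>\pi-\alpha-\beta$, that is $\alpha+\beta+\gamma>\pi$. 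The two boundary cases then give $\det G=0\Leftrightarrow\alpha+\beta+\gamma=\pi$ (the classical angle identity) and $\det G<0\Leftrightarrow\alpha+\beta+\gamma<\pi$.

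Finally I would upgrade the sign of the determinant to the full signature. The leading principal minors are $1>0$ and $\sin^2(\cdot)\geq 0$, so after ordering the edges so that a positive angle sits in the top-left block, Sylvester's criterion yields signature $(3,0,0)$ when $\det G>0$ and $(2,0,1)$ when $\det G=0$; when $\det G<0$ the matrix cannot be negative definite (its diagonal entries are positive), and a $3\times 3$ symmetric matrix with negative determinant that is not negative definite necessarily has signature $(2,1,0)$. By Theorem \ref{realization2:teo} the first two cases produce a unique non-obtuse simplex in $\matS^2$ and in $\matR^2$ respectively, and by Theorem \ref{realization:teo} the third produces a unique triangle in $\matH^2$; for the hyperbolic case Vinberg's conditions are immediate for a triangle, since every edge meets the other two (so condition (2) holds automatically) and $\bar P$ always has a vertex.

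I expect the main obstacle to be the determinant--angle-sum trichotomy of the second paragraph: isolating the roots $-\cos(\alpha\pm\beta)$ and verifying that only the upper inequality is active throughout the range $[0,\pi/2]$. A secondary nuisance is the degenerate boundary behaviour, where a zero angle forces an ideal vertex (and where the sole zero-angle Euclidean case $0,\tfrac\pi2,\tfrac\pi2$ is genuinely degenerate), so some care is needed to confirm that the realization theorems still apply and that the correct geometry is selected.
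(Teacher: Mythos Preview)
Your proof is correct and follows the same route as the paper: form the $3\times 3$ Gram matrix, determine the sign of $\det G$ in terms of $\alpha+\beta+\gamma$, read off the signature, and invoke the realization theorems. The only cosmetic difference is in how the determinant trichotomy is established: the paper exhibits the symmetric factorization
\[
\det G \;=\; -4\,\cos\tfrac{\alpha+\beta+\gamma}{2}\,\cos\tfrac{\alpha+\beta-\gamma}{2}\,\cos\tfrac{\alpha-\beta+\gamma}{2}\,\cos\tfrac{-\alpha+\beta+\gamma}{2},
\]
from which the sign is immediate (the last three factors are nonnegative on $[0,\pi/2]^3$), whereas you reach the same conclusion by factoring $\det G$ as a quadratic in $\cos\gamma$ with roots $-\cos(\alpha\pm\beta)$ --- the two factorizations are of course equivalent.
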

\begin{proof}
The Gram matrix of one such triangle is
$$G = \begin{pmatrix}
1 & -\cos \alpha & -\cos \beta \\
-\cos \alpha & 1 & -\cos \gamma \\
-\cos \beta & -\cos \gamma & 1
\end{pmatrix}$$
and its determinant is
\begin{align*}
\det G & = 1-\cos^2\alpha -\cos^2 \beta - \cos^2\gamma - 2\cos\alpha\cos\beta\cos\gamma \\
& = -4\cos\frac{\alpha+\beta+\gamma}2 \cdot \cos\frac{\alpha+\beta-\gamma}2
\cdot \cos\frac{\alpha-\beta+\gamma}2 \cdot \cos\frac{-\alpha+\beta+\gamma}2.
\end{align*}
We have $\det G > 0, =0, <0$ precisely when $\alpha+\beta+\gamma > \pi, =\pi, <\pi$. The signature of $G$ is accordingly $(3,0,0), (2,0,1), (2,1,0)$ and gives a triangle in $\matS^2, \matR^2, \matH^2$.
\end{proof}

\begin{ex}
For every $k\geq 4$ and $0\leq \theta_1, \ldots, \theta_k \leq \pi/2$ with $\sum\theta_i < (k-2)\pi$ there is a (typically non unique) polygon in $\matH^2$ with consecutive angles $\theta_1,\ldots,\theta_k$. 
\end{ex}

\subsection{Tetrahedra}
Consider a vertex $v$ of a non-obtuse polyhedron $P \subset \matX^3$ as in Figure \ref{v:fig}-(left). The figure shows the dihedral angles $\alpha_1, \alpha_2, \alpha_3 \leq \pi/2$ of the edges and the interior angles $\theta_1,\theta_2,\theta_3\leq \pi/2$ of the faces incident to $v$. Since the link of $v$ is a spherical triangle with angles $\alpha_i$, we have $\alpha_1+\alpha_2+\alpha_3>\pi$. Proposition \ref{face:non-obtuse:prop} says that $\theta_i \leq \alpha_i$. By the spherical law of cosines in fact we have
\begin{equation} \label{cosines:eqn}
\cos \theta_i = \frac{\cos \alpha_i + \cos \alpha_{i+1} \cos \alpha_{i+2}}{\sin \alpha_{i+1} \sin \alpha_{i+2}}.
\end{equation}

\begin{figure}
 \begin{center}
\centering
\labellist
\small\hair 2pt
\pinlabel $v$ at 25 80
\pinlabel $\alpha_1$ at 5 40
\pinlabel $\alpha_2$ at 63 30
\pinlabel $\alpha_3$ at 100 55
\pinlabel $\theta_3$ at 35 52
\pinlabel $\theta_1$ at 55 60
\pinlabel $\alpha_1$ at 160 58
\pinlabel $\alpha_2$ at 190 55
\pinlabel $\alpha_3$ at 220 70
\pinlabel $\alpha_4$ at 235 15
\pinlabel $\alpha_5$ at 185 40
\pinlabel $\alpha_6$ at 180 7
\pinlabel $\alpha_1$ at 285 70
\pinlabel $\alpha_2$ at 320 60
\pinlabel $\alpha_3$ at 365 80
\pinlabel $\alpha_4$ at 355 30
\pinlabel $\alpha_5$ at 310 40
\pinlabel $\alpha_6$ at 310 10
\pinlabel $\alpha_7$ at 345 85
\pinlabel $\alpha_8$ at 320 120
\pinlabel $\alpha_9$ at 310 85
\endlabellist
  \includegraphics[width = 11 cm]{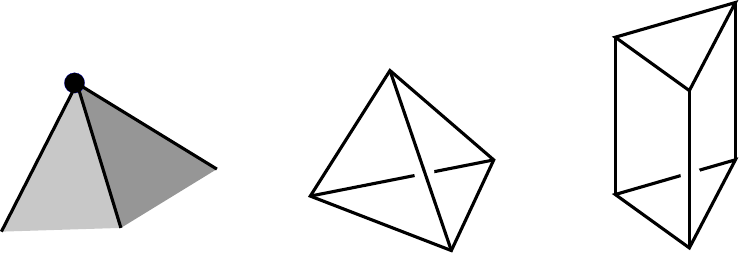}
 \end{center}
 \caption{A real vertex $v$ of a non-obtuse polyhedron $P\subset \matH^3$. Here $\alpha_1,\alpha_2,\alpha_3 \leq \pi/2$ are the dihedral angles and $\theta_1,\theta_2,\theta_3 \leq \pi/2$ are the angles of the faces adjacent to $v$, with $\theta_i$ opposite to $\alpha_i$ (left). A tetrahedron with dihedral angles $\alpha_1,\ldots, \alpha_6$ (center) and a triangular prism with dihedral angles $\alpha_1,\ldots,\alpha_9$ (right).} \label{v:fig}
\end{figure}

With this formula we can deduce the angles of the faces of $P$ from its dihedral angles. It is also valid if $v$ is ideal: in this case $\alpha_1+\alpha_2+\alpha_3=\pi$ and  $\theta_i=0$.

Let $T$ be a tetrahedron as in Figure \ref{v:fig}-(center), with some abstract dihedral angles $0 < \alpha_i \leq \pi/2$ assigned to its edges. We assume that $\alpha_i+\alpha_j+\alpha_k\geq \pi$ at each vertex, and we use \eqref{cosines:eqn} to assign three abstract angles $0\leq  \theta_i,\theta_j,\theta_k \leq \pi/2$ to each triangular face of $T$, that depend on the dihedral angles $\alpha_i$. 

The following theorem is proved in the compact case by Roeder \cite{R}. The statement is surprisingly simple and similar to Proposition \ref{triangles:prop}.

\begin{teo}
There exists a unique tetrahedron in $\matX^3$ with dihedral angles $\alpha_i$, where
$\matX^3 = \matH^3, \matR^3, \matS^3$ depends on whether the sum  $\theta_i + \theta_j + \theta_k$ of the angles of some (and hence every) face is smaller, equal to, or larger than $\pi$.
\end{teo}
\begin{proof}
The condition is clearly necessary since each face lies in a copy of $\matS^2, \matR^2, \matH^2$ correspondingly. To show that it is sufficient, we write the Gram matrix 
$$G = \begin{pmatrix}
1 & -\cos \alpha_{1} & -\cos \alpha_{2} & -\cos \alpha_6 \\
-\cos \alpha_1 & 1 & -\cos\alpha_3 & -\cos\alpha_5 \\
-\cos \alpha_2 & -\cos\alpha_3 & 1 & -\cos\alpha_4 \\
-\cos \alpha_6 & -\cos\alpha_5 & -\cos\alpha_4 & 1
\end{pmatrix}
$$
and note that since $\alpha_i+\alpha_j+\alpha_k\geq \pi$ at every vertex, every principal $3\times 3$ submatrix is either spherical or Euclidean. Set $s_i = \sin\alpha_i$. Via Gauss moves we find
\begin{align*}
\det G & = 
s_1^2s_2^2s_6^2 \det \begin{pmatrix}
1 & -\cos \theta_i & -\cos \theta_j \\
-\cos \theta_i & 1 & -\cos \theta_k \\
-\cos \theta_j & -\cos \theta_k & 1
\end{pmatrix} \\
& =-4s_1^2s_2^2s_6^2 
\cos\!\frac{\theta_i+\theta_j+\theta_k}2 \cos\!\frac{\theta_i+\theta_j-\theta_k}2
\cos\!\frac{\theta_i-\theta_j+\theta_k}2  \cos\!\frac{-\theta_i+\theta_j+\theta_k}2
\end{align*}
where $\theta_i,\theta_j,\theta_k$ are the interior angles of the front face of $T$ in Figure \ref{v:fig}-(center). If at least one principal $3\times 3$ matrix is spherical, it has signature $(3,0,0)$, and hence the signature of $G$ is $(3,1,0), (3,0,1), (4,0,0)$ depending on whether $\det G <0, =0, >0$, that is on whether $\theta_i+\theta_j+\theta_k <\pi, =\pi, >\pi$. If all the principal $3\times 3$ matrices are Euclidean, we have $\alpha_i+\alpha_j+\alpha_k=\pi$ at each vertex, hence $\theta_a=0$ for all $a$ and therefore $\det G < 0$ implies that the signature is $(3,1,0)$.
\end{proof}

Every assignment of dihedral angles to $T$ such that $\alpha_i+\alpha_j+\alpha_k \geq \pi$ at each vertex has a unique realization in the appropriate geometry. If $\alpha_i+\alpha_j+\alpha_k=\pi$ at some vertex, the tetrahedron is hyperbolic and this vertex is ideal. When all the vertices are ideal we can easily deduce that $\alpha_1=\alpha_4, \alpha_2=\alpha_5,\alpha_3=\alpha_6$.

\begin{ex}
Consider a triangular prism with angles $0<\alpha_1,\ldots, \alpha_9\leq \pi/2$ as in Figure \ref{v:fig}-(right). We assume that $\alpha_i+\alpha_j+\alpha_k \geq \pi$ at every vertex. There exists a triangular prism in $\matH^3$ with dihedral angles $\alpha_i$ if and only if the following holds:
\begin{enumerate}
\item $\alpha_1+\alpha_2+\alpha_3 < \pi$;
\item $(\alpha_4,\ldots,\alpha_9) \neq (\pi/2, \ldots, \pi/2)$.
\end{enumerate}
The hyperbolic polyhedron is unique.
\end{ex}

\subsection{The Andreev -- Roeder Theorem}
An elegant theorem of Andreev characterizes completely the non-obtuse hyperbolic polyhedra in $\matH^3$. The original proof in the compact case \cite{A} contained a gap that was fixed by Roeder \cite{RHD}. The extension to the non-compact case is also due to Andreev \cite{A2}, and it is obtained by approximating a non-compact polyhedron via compact ones.

Let $P\subset \matR^3$ be a 3-dimensional polyhedron such that every vertex is adjacent to either 3 or 4 edges. Assume that $P$ is neither a tetrahedron nor a triangular prism. Let us assign some abstract dihedral angles $0<\alpha_i\leq \pi/2$ to the edges of $P$. 

\begin{figure}
 \begin{center}
\centering
\labellist
\small\hair 2pt
\pinlabel $\alpha_1$ at -10 70
\pinlabel $\alpha_2$ at 30 40
\pinlabel $\alpha_3$ at 95 95
\pinlabel $\alpha_1$ at 110 70
\pinlabel $\alpha_2$ at 150 40
\pinlabel $\alpha_3$ at 255 60
\pinlabel $\alpha_4$ at 215 95
\pinlabel $\alpha_1$ at 270 70
\pinlabel $\alpha_2$ at 310 40
\endlabellist
  \includegraphics[width = 12 cm]{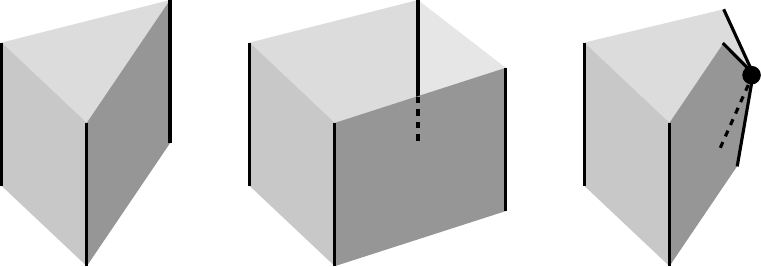}
 \end{center}
 \caption{Some configurations of faces. In the left (center) figure we suppose that the 6 (8) endpoints of the 3 (4) edges cointaining the labels $\alpha_i$ are all distinct. In the right figure we suppose that the left face does not contain the right vertex.} \label{Andreev:fig}
\end{figure}

\begin{teo}
The polyhedron $P$ can be realized as a hyperbolic polyhedron $P\subset \matH^3$ with the assigned dihedral angles if and only if the following holds:
\begin{enumerate}
\item $\alpha_i+\alpha_j+\alpha_k\geq \pi$ at each 3-valent vertex;
\item $\alpha_i +\alpha_j+\alpha_k+\alpha_l= 2\pi$ at each 4-valent vertex;
\item $\alpha_1 + \alpha_2 + \alpha_3 < \pi$ for every three faces as in Figure \ref{Andreev:fig}-(left);
\item $\alpha_1 + \alpha_2 + \alpha_3 +\alpha_4 < 2\pi$ for every four faces as in Figure \ref{Andreev:fig}-(center);
\item $\alpha_1+\alpha_2 < \pi$ for every three faces as in Figure \ref{Andreev:fig}-(right).
\end{enumerate}
The hyperbolic polyhedron $P$ is unique up to isometry.
\end{teo}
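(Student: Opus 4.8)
The plan is to split the statement into the easy necessity of conditions (1)--(5) and the considerably harder joint proof of existence and uniqueness, the latter via a continuity argument. For necessity, each condition is a local configuration forced by the geometry of a non-obtuse polyhedron. Conditions (1) and (2) come from the vertex links. By Proposition \ref{Gf:prop} the link of a $3$-valent vertex is a non-obtuse spherical triangle when the vertex is real and a Euclidean triangle when it is ideal; in either case its angles, which are the dihedral angles $\alpha_i,\alpha_j,\alpha_k$ of the incident edges, satisfy $\alpha_i+\alpha_j+\alpha_k\geq\pi$, with equality precisely in the ideal case. A $4$-valent vertex cannot be real, since a spherical quadrilateral with all angles $\leq\pi/2$ would have angle sum $\leq 2\pi$ while a genuine spherical quadrilateral needs sum $>2\pi$; hence such a vertex is ideal and its link is a non-obtuse Euclidean quadrilateral, which with all angles $\leq\pi/2$ is forced to be a rectangle, giving condition (2). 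Conditions (3)--(5) are the \emph{prismatic} conditions, concerning triples or quadruples of pairwise adjacent faces whose edges do not all meet at a common vertex. For each such configuration I would argue exactly as in the proof of Theorem \ref{non-obtuse:teo}: were the relevant angle sum too large, the principal submatrix of $G$ indexed by those faces would become spherical (or Euclidean in the borderline case), and then Lemma \ref{bijection:lemma} would produce a face or ideal vertex where these faces meet, contradicting the prescribed combinatorics of $P$.

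For existence and uniqueness I would use the continuity (deformation) method, the technical core of the Andreev--Roeder argument. Fix the combinatorial type of $P$ with edge set $E$, let $\mathcal{A}\subset\matR^E$ be the set of angle assignments satisfying (1)--(5), and observe that these are linear constraints, so $\mathcal{A}$ is convex and in particular connected. Let $\mathcal{P}$ be the space of hyperbolic polyhedra of this combinatorial type up to isometry, and let $\Phi\colon\mathcal{P}\to\mathcal{A}$ record the dihedral angles; the necessity direction guarantees that $\Phi$ lands in $\mathcal{A}$. The goal is to show that $\Phi$ is a homeomorphism onto $\mathcal{A}$, which yields existence (surjectivity) and uniqueness (injectivity) at once. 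That $\Phi$ is a local homeomorphism I would establish through infinitesimal rigidity, most cleanly via the Schläfli differential formula, showing that the derivative of $\Phi$ is an isomorphism onto the tangent space of the constraint set and that domain and target have equal dimension. I stress that uniqueness is \emph{not} a formal consequence of Vinberg's Theorem \ref{realization:teo}: the entries $G_{ij}$ coming from non-adjacent and therefore parallel or ultraparallel facets are not determined by the dihedral angles, and the fact that these hidden distances are nonetheless pinned down by the angles is precisely the rigidity content that must be proved.

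The step I expect to be the main obstacle --- and historically the one where Andreev's original proof was incomplete and Roeder supplied the correction --- is \emph{properness} of $\Phi$. One must show that if a sequence of realizations has dihedral angles converging to a point inside $\mathcal{A}$, then the polyhedra themselves converge to a genuine polyhedron of the same combinatorial type, with no face collapsing, no vertex escaping to infinity, and no two non-adjacent faces becoming tangent. This requires uniform geometric control together with a case analysis ruling out each degeneration, by checking that it would force the limiting angles onto the part of $\partial\mathcal{A}$ excluded by (1)--(5). Granting properness, $\Phi(\mathcal{P})$ is closed in $\mathcal{A}$; being a local homeomorphism it is also open; and once one knows $\mathcal{A}\neq\emptyset$ and exhibits at least one realization (by a direct construction or an induction on the number of faces), connectedness of $\mathcal{A}$ forces $\Phi$ to be onto. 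Combined with the injectivity coming from rigidity --- equivalently, since $\mathcal{A}$ is simply connected, from the triviality of the resulting covering once a single fiber is a point --- this shows that $\Phi$ is a homeomorphism. Finally, the tetrahedron and triangular prism are excluded precisely because in those low-complexity cases the dimension count and the prismatic conditions degenerate, and they are handled directly by the explicit results proved earlier in this section.
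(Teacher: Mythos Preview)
The paper does not prove this theorem. In the introduction it explicitly says that the Andreev--Roeder Theorem is the notable exception to ``most of the theorems are provided with complete proofs'', and refers the reader to \cite{RHD}; in the body, the theorem is simply stated and followed only by the one-line remark rephrasing conditions (2), (4), (5). So there is no paper proof to compare against.

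Your outline is, at the level of strategy, the one carried out in \cite{RHD}: prove necessity of (1)--(5) locally, then set up the map $\Phi$ from realizations to admissible angle assignments and show it is a homeomorphism by combining openness (local injectivity/rigidity), properness (no degeneration inside $\mathcal A$), connectedness of $\mathcal A$, and nonemptiness of the fiber over some point. Two cautions about the sketch. First, the rigidity/injectivity step in \cite{RHD} is not done via the Schl\"afli formula but via a Cauchy-type sign-change argument adapted to non-obtuse hyperbolic polyhedra; the Schl\"afli identity by itself only gives one linear relation among the $d\alpha_i$ and does not yield local injectivity. Second, the historically delicate point was not properness per se but the \emph{nonemptiness} step --- producing one realization for every combinatorial type admitting an angle assignment in $\mathcal A$ --- which in Andreev's argument relied on a combinatorial reduction lemma whose proof was incomplete; this is precisely what Roeder repaired. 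Your plan is a fair roadmap, but in its present form it is an outline rather than a proof: each of rigidity, properness, and nonemptiness requires substantial work that you would need to supply.
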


Conditions (2), (4), (5) are equivalent to $(\alpha_i, \alpha_j, \alpha_k, \alpha_l) = (\pi/2,\pi/2,\pi/2,\pi/2)$, $(\alpha_1,\alpha_2,\alpha_3,\alpha_4) \neq (\pi/2,\pi/2,\pi/2,\pi/2)$, $(\alpha_1,\alpha_2) \neq (\pi/2,\pi/2)$ respectively.

\subsection{Higher dimension?}
Having completely classified the non-obtuse polyhedra in dimension $n\leq 3$ and in all geometries, it is natural to wonder whether this elegant picture extends somehow in higher dimension. The scene changes abruptly when $n\geq 4$: there is no known general classification, not even conjecturally, of non-obtuse polyhedra, say, in $\matH^4$ or $\matH^5$. 

\section{Coxeter polyhedra} \label{Coxy:section}

We finally introduce the main protagonist of this paper. Coxeter polyhedra are non-obtuse polyhedra with nice dihedral angles: they inherit all the good properties of non-obtuse polyhedra, and add many more, so that they can be used as building blocks to construct various more complicated objects, like uniform polyhedra, discrete groups, tessellations, hyperbolic manifolds $\ldots$

\subsection{Tessellations}
A \emph{tessellation} of $\matX^n$ is a locally finite covering $T$ of $\matX^n$ with polyhedra that pairwise intersect only in mutual faces. All the polyhedra and their faces form the \emph{faces} of $T$, and those of dimension $n$ are called \emph{facets}. If $\matX^n = \matH^n$ the \emph{ideal vertices} of $T$ are those of the polyhedra in $T$.


\subsection{Coxeter polyhedron}
Let $P\subset \matX^n$ be a polyhedron with $n\geq 2$ if $\matX^n = \matH^n$ and $n\geq 1$ if $\matX^n = \matR^n$. We say that $P$ is a
\emph{Coxeter polyhedron} if its Gram matrix $G$ satisfies the following requirement: whenever $G_{ij} \in (-1,1)$ we must have $G_{ij} = - \cos (\pi/m_{ij})$ for some $m_{ij} \geq 2$. In particular, we have $G_{ij} \leq 0$ for every $i\neq j$, and hence $P$ is non-obtuse.

This definition is robust because it is expressed solely in terms of $G$, but we need a more geometric equivalent formulation. In virtue of Andreev's Theorem \ref{nonobtuse:teo}, when $n\geq 2$ the polyhedron $P$ has $k$ facets $F_1,\ldots, F_k$, and $G_{ij} = -\cos(\pi/m_{ij})$ if and only if the facets $F_i$ and $F_j$ intersect with dihedral angle $\pi/m_{ij}$.
Therefore a Coxeter polyhedron is:
\begin{itemize}
\item a point in $\matS^0$, or 
\item a segment in $\matS^1$ of length $\ell = \pi/k$ for some $k\geq 2$, or any segment in $\matR$, or
\item a polyhedron $P \subset \matX^n$ with $n\geq 2$ whose dihedral angles divide $\pi$.  
\end{itemize}

A Coxeter polyhedron $P\subset \matX^n$ determines a \emph{Coxeter group} $\Gamma < \Isom(\matX^n)$ as follows. If $n\geq 1$, let $F_1,\ldots, F_k$ be the facets of $P$ and let $r_i \in \Isom(\matX^n)$ be the reflection along the hyperplane containing $F_i$, for $i=1,\ldots, k$. The Coxeter group $\Gamma$ is the group generated by these reflections $r_1,\ldots, r_k$. In the special case where $P$ is a point in $\matS^0$, we define $\Gamma = \Isom(\matS^0) = \{ \pm 1\}$ generated by $r_1=-1$. 

The great relevance of Coxeter polyhedra in geometry stems from the following fundamental fact, proved by Coxeter \cite{C} in 1934 for $\matS^n$ and $\matR^n$ and then easily generalized to $\matH^n$.

\begin{teo}
The Coxeter group $\Gamma < \Iso(\matX^n)$ is discrete and $\{g(P)\ |\ g \in \Gamma\}$ is a tessellation of $\matX^n$. The Coxeter group $\Gamma$ has the following presentation:
$$\langle\ r_l\ | \ r_l^2, \ (r_ir_j)^{m_{ij}}\ \rangle
$$
where $l=1,\ldots, k$ and $i \neq j$ are such that $G_{ij} = - \cos (\pi/m_{ij})$. When $n \geq 2$ this is equivalent to saying that the facets $F_i, F_j$ intersect with dihedral angle $\pi/m_{ij}$.

Every discrete group $\Gamma < \Iso(\matX^n)$ generated by reflections along hyperplanes with finite volume quotient is the Coxeter group of some Coxeter polyhedron $P$.
\end{teo}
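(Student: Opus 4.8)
The plan is to establish the three assertions of the first part simultaneously through a developing-map argument, carried out by induction on $n = \dim P$ with the three geometries treated together, and then to obtain the converse by cutting $\matX^n$ along the mirrors of $\Gamma$. The induction begins with $\matX^1$, where a Coxeter segment in $\matS^1$ (of length $\pi/m$) or in $\matR^1$ tiles the line elementarily. I would first introduce the abstract Coxeter group
$$W = \langle\ s_1, \ldots, s_k\ |\ s_l^2,\ (s_i s_j)^{m_{ij}}\ \rangle,$$
where $m_{ij} = \infty$ (no relation) when $F_i$ and $F_j$ are disjoint, together with the evident surjection $\rho \colon W \to \Gamma$ sending $s_l \mapsto r_l$. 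To produce the tessellation I would build the $\matX^n$-complex $\Sigma$ obtained from $W \times P$ by gluing $(w, x)$ to $(w s_i, x)$ whenever $x$ lies in the facet $F_i$, and define the developing map $\mathcal{D} \colon \Sigma \to \matX^n$ by $(w, x) \mapsto \rho(w)(x)$; this is well defined because $r_i$ fixes the hyperplane $\partial H_i \supset F_i$ pointwise.

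The heart of the proof is to show that $\mathcal{D}$ is a local homeomorphism. Across a facet this is immediate, as $r_i$ reflects $P$ onto the adjacent chamber filling the opposite side of $\partial H_i$. The decisive point is injectivity around a ridge $F_i \cap F_j$: since the dihedral angle there equals exactly $\pi/m_{ij}$, the subgroup generated by $r_i$ and $r_j$ is dihedral of order $2m_{ij}$, and its $2m_{ij}$ chambers close up around the ridge filling the full angle $2\pi$ with disjoint interiors, which is precisely the content of the relation $(s_i s_j)^{m_{ij}} = 1$. Around a face of higher codimension I would pass to its link, which by Proposition \ref{Gf:prop} and Theorem \ref{non-obtuse:teo} is a non-obtuse, hence Coxeter, simplex in a lower-dimensional sphere, while around an ideal vertex the link is a non-obtuse Euclidean polyhedron in $\matR^{n-1}$; in either case the induction hypothesis furnishes the local tiling, so $\mathcal{D}$ is a local homeomorphism everywhere.

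Once this is known I would verify that $\mathcal{D}$ is a covering map, checking completeness of $\Sigma$ so that paths lift; then, since $\matX^n$ is simply connected and $\Sigma$ is connected, $\mathcal{D}$ is a homeomorphism. This single conclusion delivers all three assertions: the chambers $\rho(w)(P)$ tile $\matX^n$, the map $\rho$ is injective so the displayed presentation is correct, and $W \cong \Gamma$ acts simply transitively on chambers, whence $\Gamma$ is discrete. I expect the main obstacle to lie exactly here, in upgrading ``local homeomorphism'' to ``covering map'': one must rule out nontrivial wrapping of the developing map, which requires the completeness argument together with careful control of the chambers accumulating near the ideal vertices of $\matH^n$, where only the Euclidean base case governs the local picture.

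For the converse, given a discrete $\Gamma < \Isom(\matX^n)$ generated by reflections with $\Vol(\matX^n / \Gamma) < \infty$, I would consider the arrangement of all mirror hyperplanes of reflections in $\Gamma$, which is locally finite by discreteness, and let $P$ be the closure of one connected component of its complement. Such a chamber is convex, is cut out by the finitely many mirrors carrying its facets, and has finite volume because $\Vol(\matX^n/\Gamma) < \infty$; its dihedral angle at any ridge equals $\pi/m$ since the stabilizer of the ridge acts on the normal circle as a finite reflection subgroup of $O(2)$, that is a dihedral group. Hence $P$ is a Coxeter polyhedron, and applying the first part to $P$ shows that the reflections in its facets already generate $\Gamma$, so $\Gamma$ is exactly its associated Coxeter group.
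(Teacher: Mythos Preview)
Your argument is essentially the paper's own: build a complex of copies of $P$ indexed by the group, use the inductive hypothesis on links of faces (spherical) and ideal vertices (Euclidean) to see that the developing map is a local isometry, invoke completeness to conclude it is a covering and hence a homeomorphism, and treat the converse by cutting along all mirror hyperplanes. The only organizational difference is that you start directly with the abstract group $W$ and obtain the presentation and the tessellation in one stroke, whereas the paper first runs the construction with $\Gamma$ and then repeats it with the abstractly presented $\Gamma'$ to identify the two; your packaging is a bit more economical but the content is the same.

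One small correction: the implication ``non-obtuse, hence Coxeter'' for the link is false as stated (a non-obtuse polyhedron need not have dihedral angles dividing $\pi$). What you actually need, and what Proposition~\ref{Gf:prop} gives, is that the Gram matrix of the link is the corresponding principal submatrix of the Gram matrix of $P$, so the link inherits the \emph{same} dihedral angles $\pi/m_{ij}$ and is therefore itself Coxeter; Theorem~\ref{non-obtuse:teo} then tells you it is a simplex (spherical case) or a product of simplices (Euclidean case).
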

\begin{proof}[Sketch of the proof]
We proceed by induction on $n$. For every $g\in \Gamma$ we define a copy $P_g$ of $P$, and we identify the facet $F_i$ of $P_g$ with the same facet $F_i$ of $P_{gr_i}$ for all $i,g$. We show that the resulting space $X$ is naturally isometric to $\matX^n$. 

We first prove this locally. For every $h$-face $f$ of $P$, the subgroup $\Gamma_f<\Gamma$ generated by the reflections $r_i$ such that $f\subset F_i$ is the Coxeter group of the link $Q \subset \matS^{n-h-1}$ of $f$. By the inductive hypothesis $\{g(Q)\ |\ g\in\Gamma_f\}$ form a tessellation of $\matS^{n-h-1}$.  Every point in $X$ is contained in the translate of some face $f$, and since its link in $X$ is $\matS^{n-h-1}$, it has a neighbourhood isometric to a small ball in $\matX^n$.

We have proved that $X$ is locally isometric to $\matX^n$, and moreover it is complete. If $P$ is compact this is obvious; if $P\subset \matH^n$ has some ideal vertex $v$ some care is needed, and we conclude by induction as above using the link $Q \subset \matR^{n-1}$ of $v$.

The developing map $X \to \matX^n$ that sends $P_g$ to $g(P)$ is a local isometry between two complete metric spaces, hence it is a covering, hence an isometry because $\matX^n$ is simply connected. Thus via this identification we get the first part of the theorem. 

We prove that a presentation for $\Gamma$ is as stated: let $\Gamma'$ be the abstract group defined via our candidate presentation. We have a surjection $\Gamma' \to \Gamma$, and by repeating the same construction using $\Gamma'$ instead of $\Gamma$ we get another $X'$ that covers $\matX^n$ isometrically. We deduce that $X=X'$ and $\Gamma = \Gamma'$.

Finally, if $\Gamma < \Isom(\matX^n)$ is discrete and has finite volume quotient, the fixed hyperplanes of all the reflections in $\Gamma$ cut $\matX^n$ into a tessellation where each polyhedron $P$ is a fundamental domain, and $\Gamma$ is the Coxeter group of $P$.
\end{proof}

By Proposition \ref{Gf:prop} the link of a face or of an ideal vertex of a Coxeter polyhedron is also a Coxeter polyhedron, of spherical and Euclidean type respectively. 

\begin{warning}
The face of a Coxeter polyhedron may not be a Coxeter polyhedron! Its dihedral angles are non-obtuse but may not divide $\pi$. 
\end{warning}

\subsection{Coxeter diagrams}
We know from Theorems \ref{realization:teo} and \ref{realization2:teo} that a Coxeter polyhedron $P\subset \matX^n$ is fully determined by its $k\times k$ Gram matrix $G$. It is often convenient to describe $G$ via a \emph{Coxeter diagram}, that is a graph $D$ with $k$ nodes, that we temporarily label as $1,\ldots, k$, and: 
\begin{enumerate}
\item One edge decorated with $m_{ij}$ connecting the nodes with labels $i,j$ if $G_{ij} = - \cos (\pi/m_{ij})$ and $m_{ij} \geq 3$. If $m_{ij}=3$, the number $m_{ij}$ is omitted;
\item One \emph{thick} edge connecting the nodes with labels $i,j$ if $G_{ij} = -1$; 
\item One \emph{dashed} edge decorated with $d_{ij}$
connecting the nodes with labels $i,j$ if $G_{ij} = - \cosh d_{ij}$. 
\end{enumerate}


The Coxeter diagram $D$ is \emph{spherical}, \emph{Euclidean}, or \emph{hyperbolic} according to the geometry of $\matX^n$; the geometry can be deduced directly from $D$ since it depends on $\det G$ being positive, null, or negative, and $G$ is fully encoded by $D$.

When $n\geq 1$ the nodes in $D$ correspond to the facets of $P$, and when $n\geq 2$ the edges of type (1) correspond to the ridges of $P$.
When $n\geq 2$, two nodes are \emph{not} connected by an edge $\Longleftrightarrow$ the corresponding facets meet with angle $\pi/2$.
Coxeter diagrams are visually convenient when $P$ has few facets and many right-angled dihedral angles, since no edge is drawn for these. 

A Coxeter polyhedron $P$ is \emph{irreducible} if its diagram $D$ is connected, and \emph{reducible} otherwise. This is equivalent to the corresponding Gram matrix $G$ being indecomposable. By Exercises \ref{G:ex} and \ref{indec:ex}, a Coxeter polyhedron $P$ is reducible if and only if it is either a product (in $\matR^n$) or a join (in $\matS^n$) of two Coxeter polyhedra of smaller dimensions. Every Coxeter polyhedron in $\matH^n$ is automatically irreducible, and every simplex in $\matR^n$ is automatically irreducible.

\subsection{Coxeter subdiagrams}
Let a Coxeter diagram $D$ describe a Coxeter polyhedron $P \subset \matX^n$.
A set of nodes in $D$ generates a subdiagram $D'\subset D$ that consists of these nodes plus all the edges in $D$ joining them. 
Subdiagrams correspond to principal submatrices of the Gram matrix. 

If a subdiagram $D' \subset D$ is the Coxeter diagram of some Coxeter polytope $Q\subset \matX^k$, we say that it is a \emph{Coxeter subdiagram}.
By Lemma \ref{bijection:lemma}, the faces of $P$ correspond to the spherical Coxeter subdiagrams of $D$, while the ideal vertices of $P$ correspond to the Euclidean subdiagrams with $k=n-1$. 

\begin{figure}
 \begin{center}
\centering
\labellist
\small\hair 2pt
\endlabellist
  \includegraphics[width = 11 cm]{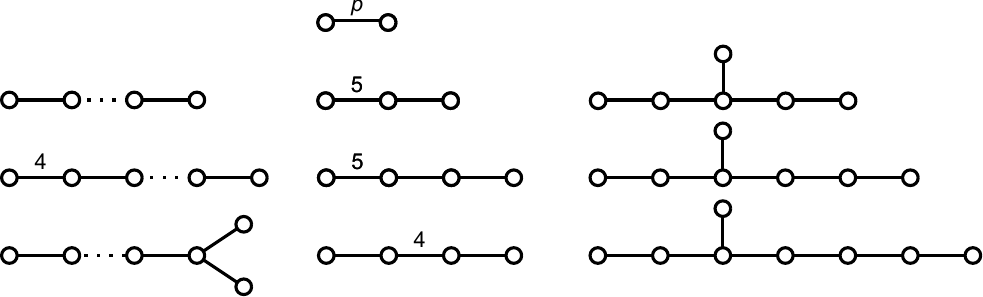}
 \end{center}
 \caption{The Coxeter diagrams of the irreducible spherical Coxeter simplexes. The diagrams in the first column have at least 1, 2, 4 nodes respectively, and we also have $p\geq 5$ (the cases $p=3,4$ are covered by other diagrams). The diagrams with one node, and with two nodes with edge label $p\geq 3$, represent a point in $\matS^0$ and an arc in $\matS^1$ of length $\pi/p$.}  \label{Coxeter-spherical:fig}
\end{figure}

\begin{figure}
 \begin{center}
\centering
\labellist
\small\hair 2pt
\endlabellist
  \includegraphics[width = 11 cm]{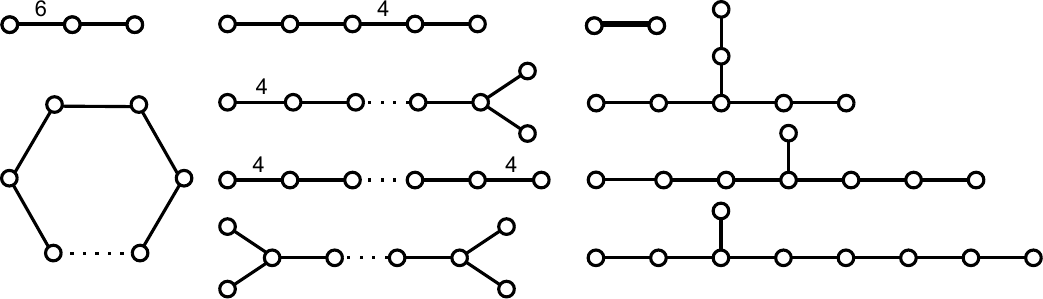}
 \end{center}
 \caption{The Coxeter diagrams of the Euclidean Coxeter simplexes. The bottom left diagram is a closed polygon with at least 3 nodes. The three diagrams in the bottom center have at least 4, 3, 5 nodes respectively. The top right diagram with two nodes and one thickened edge represents a segment in $\matR$.}  \label{Coxeter-Euclidean:fig}
\end{figure}

\begin{figure}
 \begin{center}
\centering
\labellist
\small\hair 2pt
\endlabellist
  \includegraphics[width = 11 cm]{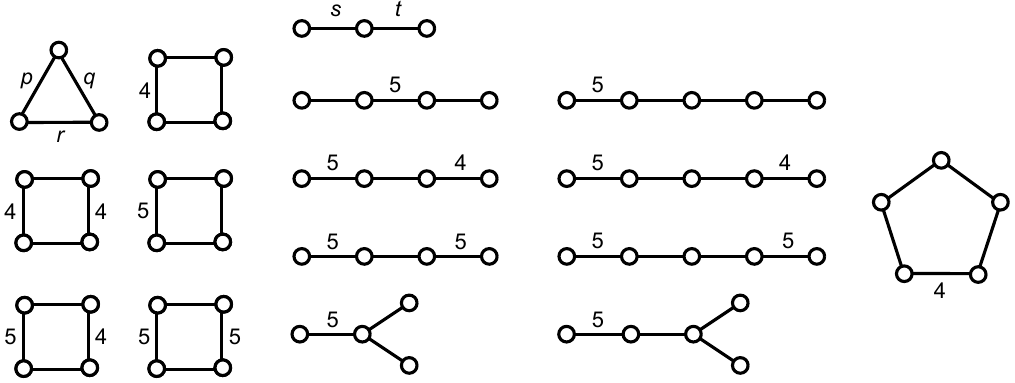}
 \end{center}
 \caption{The Coxeter diagrams of the compact hyperbolic Coxeter simplexes. Here $p,q,r,s,t\geq 3$ with $(p,q,r)\neq (3,3,3)$, and $(s,t) \neq (3,3), (3,4), (3,5), (3,6)$ and their permutations.}  \label{Coxeter-hyperbolic:fig}
\end{figure}

\begin{figure}
 \begin{center}
\centering
\labellist
\small\hair 2pt
\endlabellist
  \includegraphics[width = 11.5 cm]{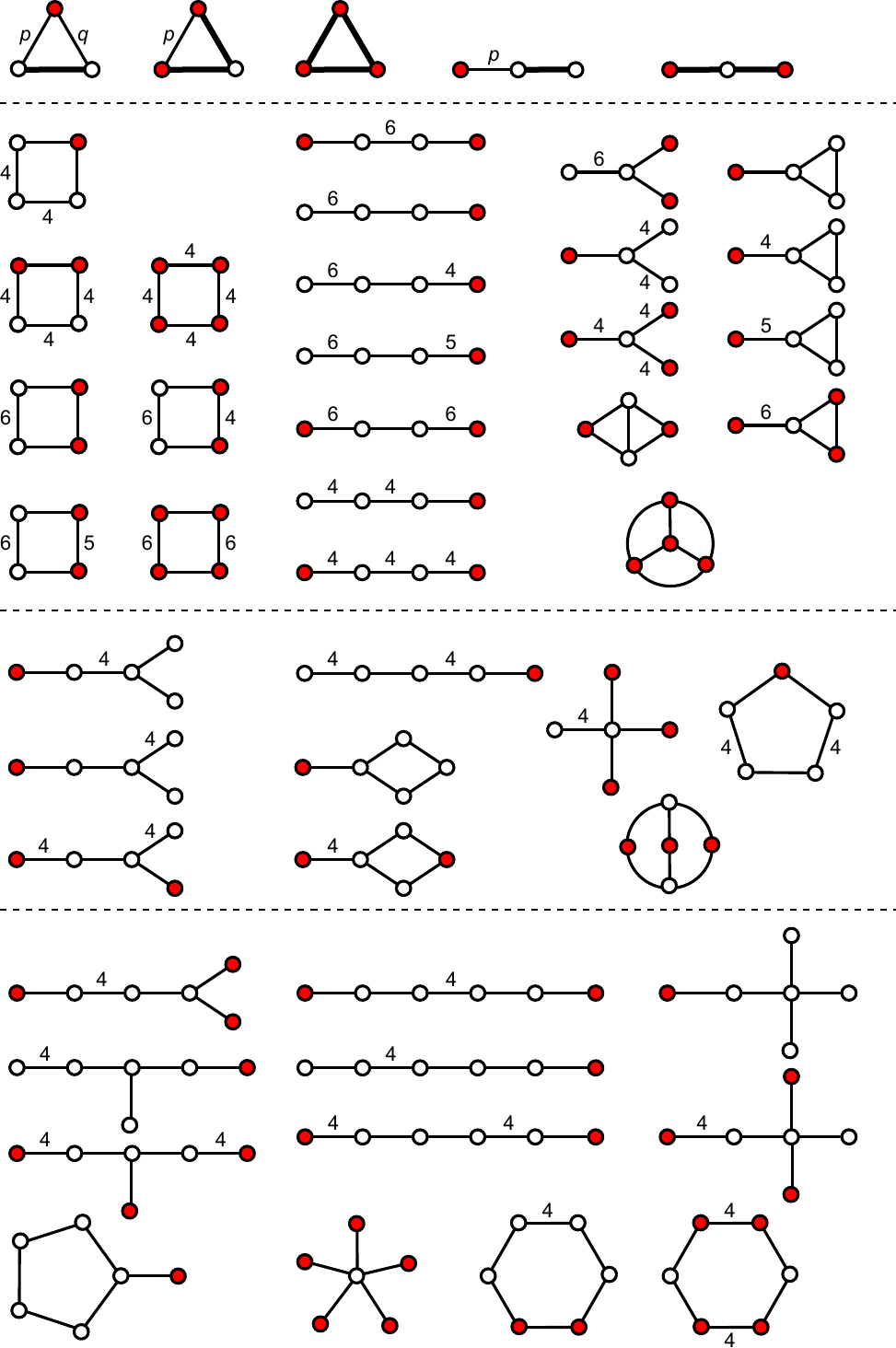}
 \end{center}
 \caption{The Coxeter diagrams of the non-compact hyperbolic Coxeter simplexes of dimension $2,3,4$ and 5. Here $p,q\geq 3$. The red nodes indicate the facets that are opposite to the ideal vertices.}  \label{Coxeter-hyperbolic-ideal:fig}
\end{figure}

\begin{figure}
 \begin{center}
\centering
\labellist
\small\hair 2pt
\endlabellist
  \includegraphics[width = 12.5 cm]{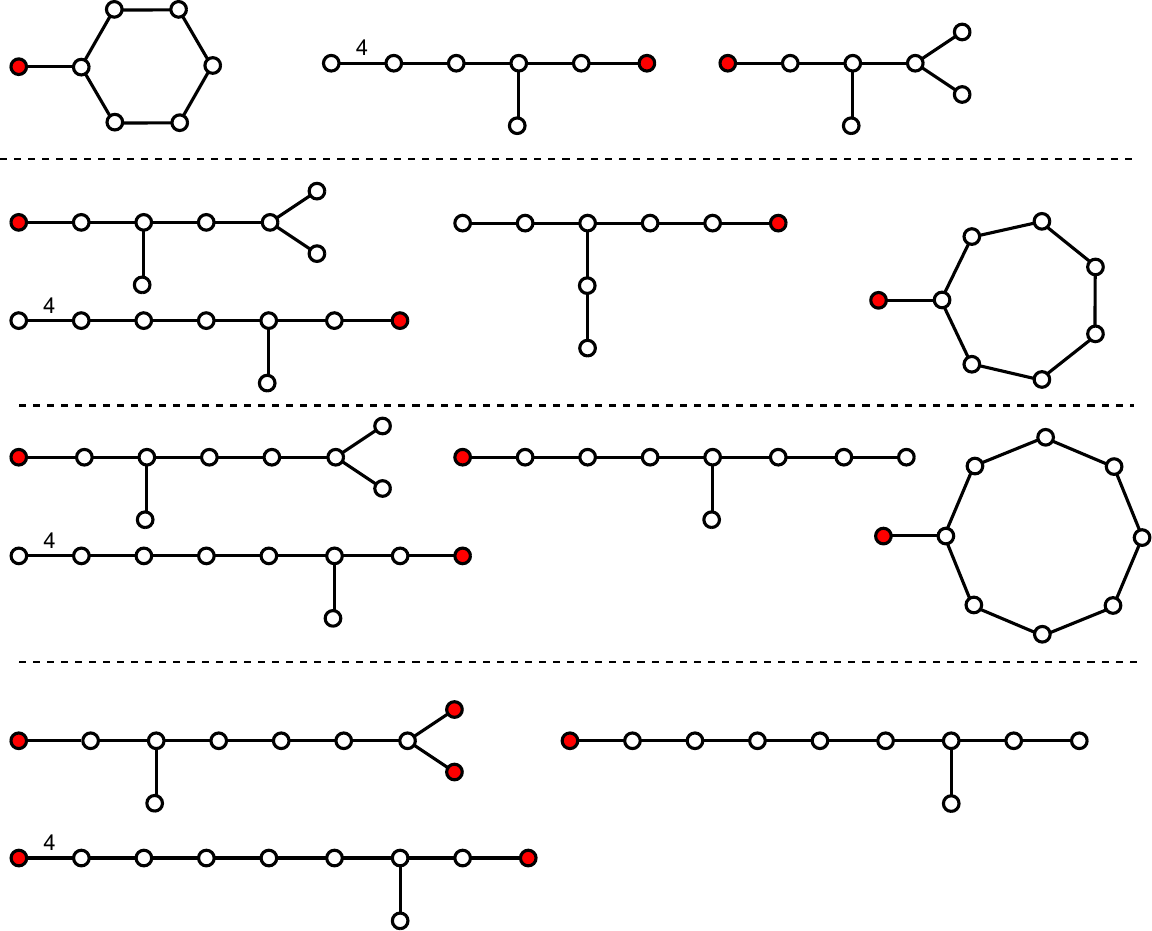}
 \end{center}
 \caption{The Coxeter diagrams of the non-compact hyperbolic Coxeter simplexes of dimension $6, 7, 8$ and $9$. The red nodes indicate the facets that are opposite to the ideal vertices.}  \label{Coxeter-hyperbolic-ideal2:fig}
\end{figure}

\subsection{Coxeter simplexes}
The Coxeter diagram of a Coxeter $n$-simplex is quite peculiar: it has $n+1$ nodes and no dashed edges, and no thick edges if $n\geq 3$; by removing a node, that corresponds to some facet $F$, we get the (spherical or Euclidean) Coxeter diagram of the link of the (real or ideal) vertex opposite to $F$.

The classification of Coxeter simplexes is due to Coxeter \cite{C} for $\matS^n$, $\matR^n$, to Lann\'er \cite{L} for $\matH^n$ in the compact case, and to Koszul \cite{K} and Chein \cite{Ch} in the non-compact case. They have altogether proved the following remarkable theorem. Recall that every Coxeter simplex in $\matR^n$ or $\matH^n$ is automatically irreducible.

\begin{teo} \label{irreducible:Coxeter:teo}
The irreducible Coxeter simplexes in $\matS^n, \matR^n, \matH^n$ are precisely those represented by the Coxeter diagrams shown in Figure \ref{Coxeter-spherical:fig}, \ref{Coxeter-Euclidean:fig}, \ref{Coxeter-hyperbolic:fig}, \ref{Coxeter-hyperbolic-ideal:fig}, and \ref{Coxeter-hyperbolic-ideal2:fig}.
\end{teo}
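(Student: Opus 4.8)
The plan is to translate the classification entirely into linear algebra via the Gram matrix, and then to run a hereditary ``forbidden subdiagram'' induction. By Exercise~\ref{simplex:ex} a Coxeter $n$-simplex has exactly $n+1$ facets, so its Gram matrix $G$ is $(n+1)\times(n+1)$, and irreducibility means $G$ is indecomposable, i.e.\ its diagram is connected. Writing $G=I-B$ with $B\geq 0$, the off-diagonal entries are $-\cos(\pi/m_{ij})$ with $m_{ij}\in\{3,4,5,\dots\}\cup\{\infty\}$ (a thick edge giving $-1$); no ultraparallel entries occur, since every two facets of a simplex share a ridge. By Theorems~\ref{realization:teo} and~\ref{realization2:teo} the geometry is read off from the signature: $G$ is positive definite, signature $(n+1,0,0)$, for $\matS^n$; positive semidefinite of corank one, signature $(n,0,1)$, for $\matR^n$; and of signature $(n,1,0)$ for $\matH^n$. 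So the theorem becomes the problem of classifying the connected diagrams of each of these three signature types.

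The key structural input is that deleting a node from a simplex diagram yields the link of the opposite vertex. By Lemma~\ref{bijection:lemma} such a link is spherical when that vertex is real and Euclidean when it is ideal. Combined with the elementary fact that a proper principal submatrix of a positive definite (resp.\ corank-one semidefinite) matrix is again positive definite, this produces a clean hierarchy: spherical diagrams have all subdiagrams spherical; Euclidean diagrams have all \emph{proper} subdiagrams spherical; compact hyperbolic (Lann\'er) diagrams are connected, of signature $(n,1,0)$, with every proper subdiagram spherical; and non-compact hyperbolic (Koszul--Chein) diagrams are connected, of signature $(n,1,0)$, with every proper connected subdiagram spherical or Euclidean and at least one Euclidean. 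I would establish the four lists in exactly this order, each feeding into the next.

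The engine in every case is a minimal-forbidden-subdiagram argument driven by determinant computations. First I would compute, via the standard continued-fraction recursion for the determinant of a path and of a cycle, that every candidate Euclidean diagram ($\widetilde A_n,\widetilde B_n,\widetilde C_n,\widetilde D_n,\widetilde E_{6,7,8},\widetilde F_4,\widetilde G_2$) has determinant $0$. Since a proper subdiagram of a spherical diagram must have strictly positive determinant, no spherical diagram can contain any of these; this immediately forbids cycles, vertices of valence $\geq 4$ (as $\widetilde D_4$ is Euclidean), a second branch point, and overly large labels or arm-lengths (via $\widetilde E_8$, etc.). Eliminating these possibilities leaves precisely the finite spherical list of Figure~\ref{Coxeter-spherical:fig}. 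The Euclidean diagrams are then exactly the connected diagrams of determinant $0$ all of whose proper subdiagrams are spherical, yielding Figure~\ref{Coxeter-Euclidean:fig}. For the two hyperbolic lists one imposes that \emph{every} one-node deletion be spherical (Lann\'er) or spherical/Euclidean (Koszul--Chein); since spherical and Euclidean diagrams of high rank are so rigid, these gluing constraints sharply limit the number of nodes and reduce the search to the finite tables of Figures~\ref{Coxeter-hyperbolic:fig}--\ref{Coxeter-hyperbolic-ideal2:fig}.

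The hard part is the sharp dimension bound, namely showing that the hyperbolic lists terminate: compact simplexes only for $n\leq 4$ and non-compact ones only for $n\leq 9$. This is not forced by a single forbidden subdiagram; rather one must show that any connected diagram on too many nodes, with all node-deletions spherical or Euclidean, nonetheless contains a subdiagram of the wrong signature or fails to attain signature $(n,1,0)$. Controlling this requires the precise determinant recursions for chains together with careful sign bookkeeping for the lowest eigenvalue, and is exactly where the combinatorial labor of \cite{C,L,K,Ch} is concentrated. I would present the resulting enumeration in full but defer the most repetitive determinant verifications to those references.
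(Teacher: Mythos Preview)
Your proposal is correct and follows essentially the same strategy as the paper's sketch: both reduce the problem to the Gram matrix signature, use the key hereditary fact that one-node deletions of a simplex diagram must be spherical or Euclidean (corresponding to real or ideal vertices via Lemma~\ref{bijection:lemma}), and identify the geometry by determinant computations. The only difference is organizational: the paper proceeds by induction on $n$, classifying all three geometries simultaneously at each dimension, whereas you stratify by geometry (spherical, then Euclidean as minimal non-spherical, then Lann\'er, then Koszul--Chein); this is a cosmetic reordering of the same work, and your forbidden-subdiagram formulation is the standard way to carry out what the paper summarizes as ``make a list.''
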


\begin{proof}[Sketch of the proof]
By Vinberg's Realization Theorem \ref{realization:teo}, a diagram with $n+1$ nodes and some edges, some of which are labeled with integers $\geq 3$, is the Coxeter diagram of a $n$-simplex in some $\matX^n$ $\Longleftrightarrow$ each of the $n+1$ subdiagrams obtained by removing one node is either a spherical or a Euclidean Coxeter diagram. 

We proceed by induction on $n$. Coxeter triangles are easily classified. Having already classified all the Coxeter diagrams representing $(n-1)$-simplexes, we make a list of all the connected diagrams with $n+1$ nodes such that by removing any vertex we always get the disjoint union of some connected Coxeter diagrams, that are either all spherical or all Euclidean. We identify the geometry of the new $n$-simplex by calculating the determinant of the Gram matrix.

If we are interested only in compact polyhedra, then only spherical subdiagrams are allowed. This makes the classification much shorter and easier to obtain.
\end{proof}

The following corollary is particularly useful when we want to study the faces and ideal vertices of a Coxeter polyhedron $P$.

\begin{cor}
Let $D$ be a Coxeter diagram. A subdiagram $D'\subset D$ is a Coxeter spherical (Euclidean) subdiagram $\Longleftrightarrow$ it is a disjoint union of diagrams shown in Figure \ref{Coxeter-spherical:fig} (\ref{Coxeter-Euclidean:fig}).
\end{cor}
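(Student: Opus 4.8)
The plan is to observe that both the spherical and the Euclidean conditions are \emph{local} to the connected components of $D'$, and then to invoke the classification of irreducible Coxeter simplexes (Theorem \ref{irreducible:Coxeter:teo}) on each component. First I would note that the Gram matrix $G'$ of the subdiagram $D'$ is block-diagonal with respect to the connected components of $D'$: two nodes lying in different components are joined by no edge, so the corresponding off-diagonal entry of $G'$ vanishes. Consequently the signature of $G'$ is the sum of the signatures of the blocks, so $G'$ is positive definite if and only if each block is, while $G'$ decomposes into indecomposable pieces of signature $(k,0,1)$ if and only if each block does. Hence $D'$ is spherical (Euclidean) if and only if each of its connected components is spherical (Euclidean), which reduces the statement to the case of a connected diagram.

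For a connected diagram the claim is essentially a restatement of Theorem \ref{irreducible:Coxeter:teo}. Indeed, a connected Coxeter subdiagram inherits all its labels from $D$, so its $m\times m$ Gram matrix has $1$ on the diagonal and entries $-\cos(\pi/m_{ij})\le 0$ off the diagonal. If this matrix is positive definite, then by Theorem \ref{realization2:teo} it is realized as the Gram matrix of a spherical Coxeter simplex in $\matS^{m-1}$; being connected, this simplex is irreducible, hence its diagram appears in Figure \ref{Coxeter-spherical:fig}. If instead the matrix has signature $(m-1,0,1)$, it is realized by Theorem \ref{realization2:teo} as the Gram matrix of a Euclidean Coxeter simplex in $\matR^{m-1}$, which is automatically irreducible and therefore appears in Figure \ref{Coxeter-Euclidean:fig}. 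The converse implications are immediate, since every diagram drawn in Figure \ref{Coxeter-spherical:fig} (respectively Figure \ref{Coxeter-Euclidean:fig}) is by construction the diagram of a spherical (respectively Euclidean) Coxeter simplex, and hence spherical (Euclidean).

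There is no essential new difficulty here: all the genuine work has already been carried out in Theorem \ref{irreducible:Coxeter:teo}, and the only step requiring care is the reduction to the connected case. The point to verify is that \emph{spherical} and \emph{Euclidean} really are properties that can be tested component by component, which rests on the additivity of the signature over the diagonal blocks of $G'$ and on the fact that each indecomposable summand lives entirely inside a single connected component. One should also check that the labels of $D'$ are all of the form $\pi/m$ with $m$ an integer, so that the classification of Coxeter simplexes genuinely applies; this is automatic because $D'$ is a subdiagram of the Coxeter diagram $D$.
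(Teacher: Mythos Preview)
Your proof is correct and is precisely the intended deduction: the paper states this corollary without proof, since it follows immediately from Theorem \ref{irreducible:Coxeter:teo} by reducing to connected components via the block-diagonal structure of the Gram matrix and then invoking Theorem \ref{realization2:teo}, exactly as you do. One small point worth making explicit is that the paper's definition of a \emph{Euclidean} principal submatrix also imposes the rank condition $\rk G' = n-1$ (with $n$ the ambient dimension), which your argument, like the corollary's statement itself, leaves implicit; this does not affect the component-wise characterization you establish.
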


\begin{rem}
The spherical Coxeter diagrams in Figure \ref{Coxeter-spherical:fig}, listed from top left downwards, are often denoted via the symbols $A_n, B_n, D_n, I_p, H_3, H_4, F_4, E_6, E_7, E_8$. 
All the subscripts except $p$ indicate the number of nodes. 
Sometimes the notation $G_2 = I_6$, $H_2=I_5$, and $C_n=B_n$ is also employed. The Euclidean ones in Figure \ref{Coxeter-Euclidean:fig} are also labeled with similar symbols $\tilde G_2, \tilde A_n, \tilde F_4, \tilde B_n, \tilde C_n, \tilde D_n, \tilde I_1, \tilde E_6, \tilde E_7, \tilde E_8$. 
\end{rem}

\section{Regular and uniform polyhedra and tessellations} \label{regy:section}
In this section we introduce and describe a series of very symmetric and beautiful geometric objects. We study the polyhedra and tessellations with the highest degrees of symmetries: these are called, from the most symmetric to the least, \emph{regular}, \emph{semiregular}, and \emph{uniform}. In some contexts these objects are completely classified, in some others there are only conjecturally complete lists, and in some situations there are just too many objects and no conjectural general picture. 

Coxeter diagrams are of course the most powerful tool to study these very symmetric entities. We describe in particular a geometric fruitful manipulation called the \emph{Wythoff construction} that transforms a Coxeter diagram into a uniform polyhedron or tessellation. Most (but not all) of the symmetric objects that we describe here will be obtained in this way.

\subsection{Definitions}
Tessellations of dimension $n$ are similar to polyhedra of dimension $n+1$ in many aspects, the most important one being that they both have faces of dimension $\leq n$ that are themselves polyhedra.

Let $X$ be either a tessellation of $\matX^n$ or a polyhedron in $\matX^{n+1}$. In both cases $X$ has faces of dimension $\leq n$ and facets of dimension $n$. We exceptionally consider all the ideal vertices of $X$ as vertices, and hence also as faces, of $X$, with the obvious containments (an ideal vertex $v$ is contained in every face whose closure contains $v$).  
A \emph{flag} in $X$ is a sequence $f_0\subset \dots \subset f_{n}$ where $f_i$ is an $i$-face of $X$. 

An \emph{isometry} of $X$ is an isometry of $\matX^n$ or $\matX^{n+1}$ that preserves the faces of $X$ as a set (when $X$ is a polyhedron this is equivalent to an isometry that preserves $X$).

\begin{defn}
We say that $X$ is:
\begin{enumerate}
\item \emph{Regular} if its isometries act transitively on the flags of $X$;
\item \emph{Semiregular} if its isometries act transitively on the vertices of $X$, and all the facets are regular;
\item \emph{Uniform} if its isometries act transitively on the vertices of $X$, and all the facets are regular (if $n=2$) or uniform (defined recursively, if $n\geq 3$).
\end{enumerate}
\end{defn}

Here ideal vertices count as vertices, so the transitive action on the vertices implies in all cases that the vertices of $X$ are either all real or all ideal. A polyhedron or tessellation is regular if and only if its isometry group acts transitively on the maximal simplexes of its barycentric subdivision. Of course (1) $\Longrightarrow$ (2) $\Longrightarrow$ (3). We have (1) $\Longleftrightarrow$ (2) if $n=1$ and (2) $\Longleftrightarrow$ (3) if $n\leq 2$.



\subsection{The Wythoff construction} \label{Wythoff:subsection}
We introduce a geometric construction that generates many uniform tessellations out of a single Coxeter simplex.

Let $P\subset \matX^n$ be an irreducible compact Coxeter simplex, equipped with a fixed \emph{seed} point $p\in P$. 
Pick the half-lines $l_1,\ldots,l_{n+1} \subset \matX^n$ centered in $p$ orthogonal to the facets of $P$ (pointing outward, like the normal vectors of these facets; since $P$ is irreducible, in the spherical case every facet has distance $<\pi/2$ from $p$ and hence the half-lines are well-defined).
The \emph{dual star} in $\matX^n$ with center $p$ is the union of the $\tvect{n+1}2$ distinct $(n-1)$-dimensional cones with vertex $p$ obtained as the convex hull of $n-1$ distinct half-lines in $l_1,\ldots,l_{n+1}$ (if $n=2$ the dual star is just the union of the three half-lines $l_1\cup l_2 \cup l_3$). The polyhedron $P$ intersects the dual star into a codimension-1 complex $Y$ that depends on $p$, see Figure \ref{Wythoffian:fig}.

\begin{figure}
 \begin{center}
\centering
\labellist
\small\hair 2pt
\endlabellist
  \includegraphics[width = 12.5 cm]{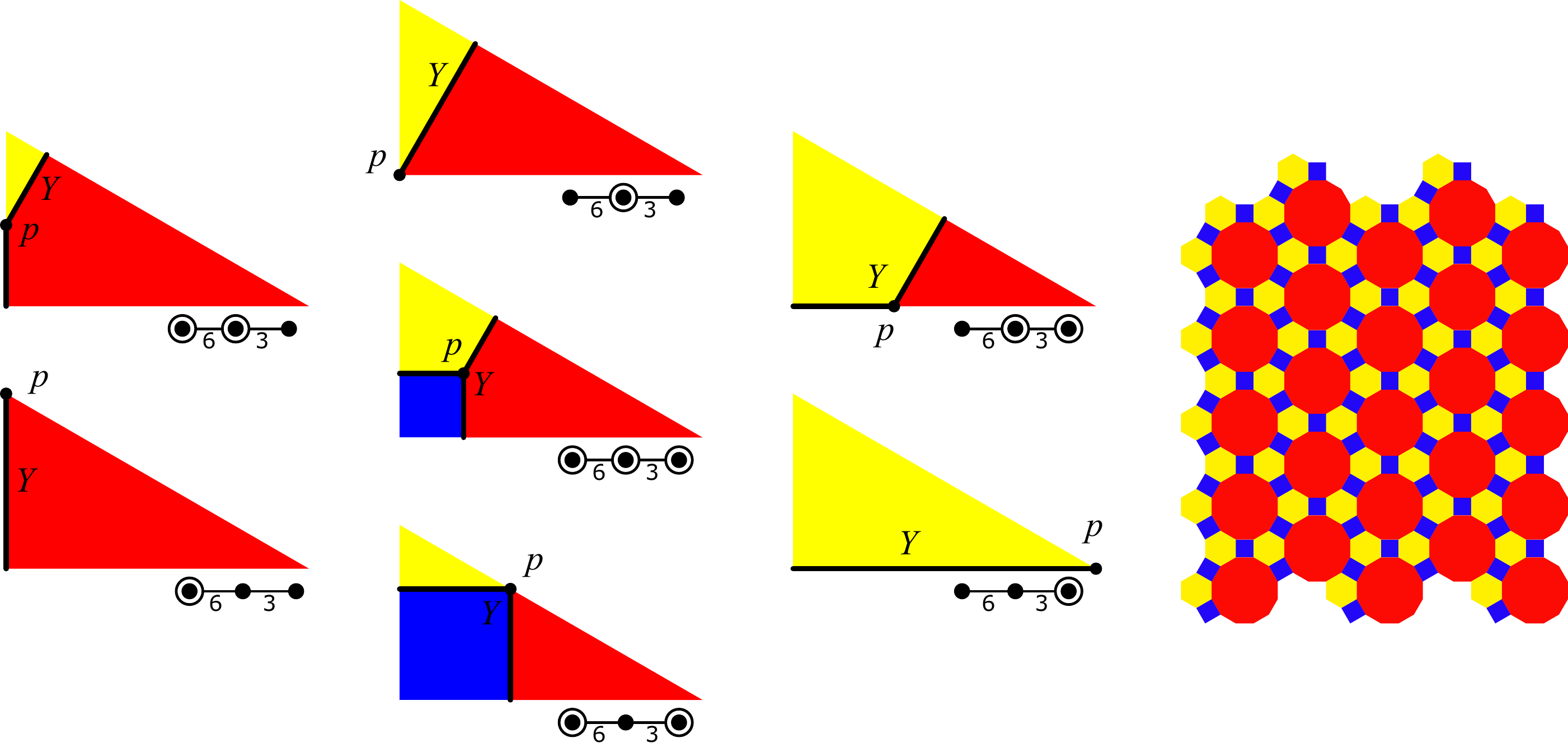}
 \end{center}
 \caption{The Wythoff construction on a Euclidean triangle with angles $\pi/2, \pi/3,\pi/6$ at various points $p$. The 1-dimensional complex $Y$ is drawn in black and depends on $p$. The central configuration produces the tessellation shown on the right. In the examples shown here $p$ is always equidistant from the sides that do not contain it and hence the resulting tessellation is uniform. The Coxeter -- Wythoff diagram is shown in each case.}  \label{Wythoffian:fig}
\end{figure}

Recall that $P=\matX^n/\Gamma$ where $\Gamma$ is generated by the reflections along the facets of $P$. The preimage $\tilde Y$ of $Y$ in $\matX^n$ along the quotient map $\matX^n \to P$ is a codimension-1 subcomplex in $\matX^n$, and the closures of the connected components of its complement form a tessellation $T$ of $\matX^n$ that depends only on $P$ and on the seed $p$. 
See Figure \ref{Wythoffian:fig}.
We say that $T$ is obtained from $P$ via the \emph{Wythoff construction} with seed $p$. 

\begin{rem}
If $\matX^n=\matS^n$, we interpret the tessellation $T$ as a polyhedron $Q \subset \matR^{n+1}$ by taking the convex hull of its vertices. In fact in this case the construction is much simpler to define: the polyhedron $Q$ is just the convex hull of the $\Gamma$-orbit of $p$. By construction $T$ and $Q$ have the same combinatorics.
\end{rem}

\begin{ex} \label{ideal:ex}
The Wythoff construction extends naturally to the case where $P\subset \matH^n$ is a simplex with only one ideal vertex $v$ and the seed $p$ is positioned at $v$. The construction produces a tessellation $T$ of $\matH^n$ into ideal polyhedra. (If we allow more ideal vertices, or a different positioning for $p$, we very likely get tessellations with infinite volume polyhedra, and these are not allowed as tessellations here.)
\end{ex}

\subsection{Well positioned seeds} The Wythoff construction depends continuously on the seed, and we now show that by putting the seed in some nice position we are guaranteed to have a uniform polyhedron or tessellation.

\begin{ex}
Let $P \subset \matX^n$ be a compact simplex. Every face $f$ of $P$ contains a unique point $p$ that is equidistant from all the facets of $P$ not containing $f$.
\end{ex}

We say that a point $p\in P$ as in the previous exercise is \emph{well positioned}. 

\begin{prop}
Let $P\subset \matX^n$ be a Coxeter simplex. A tessellation obtained from the Wythoff construction is uniform if and only if the seed $p$ is well positioned.
\end{prop}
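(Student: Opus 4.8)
The plan is to peel apart the two clauses in the definition of uniformity. Vertex-transitivity comes for free: the Coxeter group $\Gamma$ generated by the reflections $r_i$ preserves the subcomplex $\tilde Y$, hence the tessellation $T$, so $\Gamma\leq\Isom(T)$; and every vertex of $T$ lies in the single orbit $\Gamma p$, since each tile $g(P)$ of the fundamental tessellation carries exactly the one vertex $g(p)$. Thus $T$ is vertex-transitive for \emph{every} seed, and the entire content of the proposition is concentrated in the second clause, that each facet of $T$ be regular (if $n=2$) or uniform (if $n\geq 3$).

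The second step is to recognize the facets of $T$ as one-dimension-lower Wythoffians. Up to the action of $\Gamma$ there is one facet type $\Phi_i$ for each vertex $V_i$ of $P$, and $\Phi_i$ is the convex hull of the orbit $\Gamma_{V_i}\,p$, where $\Gamma_{V_i}=\langle r_j : j\neq i\rangle$ is the finite reflection group fixing $V_i$. This $\Gamma_{V_i}$ is exactly the spherical Coxeter group of the link $\lk(V_i)\subset\matS^{n-1}$, so $\Phi_i$ is precisely the spherical Wythoff construction on $\lk(V_i)$ with seed $p_i$ equal to the direction of $p$ as seen from $V_i$. Since $\Gamma_{V_i}$ acts essentially on the tangent space at $V_i$, the cell $\Phi_i$ is genuinely $n$-dimensional unless $p=V_i$, which happens for at most one index (precisely when the seed sits at a vertex, where the condition below is vacuous). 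Because uniformity is a combinatorial-plus-symmetry condition, invariant under rescaling and independent of the ambient geometry, $\Phi_i$ is uniform if and only if the spherical Wythoffian on $\lk(V_i)$ with seed $p_i$ is.

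The central step is the metric-to-angular translation: $p$ is well positioned in $P$ if and only if each induced seed $p_i$ is well positioned in $\lk(V_i)$. The key observation is that for $j\neq i$ the mirror $\partial H_j$ passes through $V_i$, so if $p$ lies at distance $\rho=\mathrm{dist}(V_i,p)$ from $V_i$ in direction $u$, then $\mathrm{dist}(p,F_j)$ factors as $\rho$ times a strictly monotone function of the angle between $u$ and $\partial H_j$ (in $\matR^n$ simply $\mathrm{dist}(p,F_j)=\rho\,|\langle u,v_j\rangle|$, with the evident $\sin$/$\sinh$ analogues in $\matS^n$ and $\matH^n$). The radial factor $\rho$ is common to all mirrors through $V_i$, so the equalities $\mathrm{dist}(p,F_a)=\mathrm{dist}(p,F_b)$ translate exactly into the equidistances of $p_i$ from the corresponding facets of $\lk(V_i)$. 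Finally, any two facets $F_a,F_b$ of the simplex $P$ share a vertex $V_i$ with $i\neq a,b$, so chaining the per-vertex equidistances across the vertices assembles them into the single global equidistance defining well-positionedness, and conversely global equidistance restricts to each link.

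Assembling these, I would induct on $n$, the base case being the planar one $n=2$ (where the facets are the $2m_i$-gons around the vertices $V_i$, each regular exactly when $p$ is equidistant from the two sides at $V_i$, hence all regular exactly when $p$ is the incenter). The inductive hypothesis gives $\Phi_i$ uniform $\iff$ $p_i$ well positioned, and combining with the first and third steps yields $T$ uniform $\iff$ all $\Phi_i$ uniform $\iff$ all $p_i$ well positioned $\iff$ $p$ is well positioned. I expect the main obstacle to lie in making the facet identification of the second step precise and in verifying the distance factorization uniformly across the three geometries; the bookkeeping of which facets survive when the seed lies on $\partial P$ is a further point that must be handled, although the only genuine degeneracy ($p=V_i$) occurs exactly in the vacuous case where the seed is a vertex.
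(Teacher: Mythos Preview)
Your plan is correct in outline but takes a considerably longer route than the paper's. The paper observes that by construction the isometry group acts transitively not only on the vertices of $T$ but on the vertices of \emph{every face} of $T$ (each face being itself an orbit under the appropriate parabolic subgroup). Unwinding the recursive definition of uniformity and inducting on $n$, this reduces ``$T$ uniform'' to ``every $2$-face is a regular polygon'', i.e.\ to all edges of $T$ having the same length; and since the edges through $p$ are the segments $[p,r_i(p)]$ of length $2\,d(p,F_i)$ for those $i$ with $p\notin F_i$, this is exactly the well-positioned condition.

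Your route through the facets $\Phi_i$ and their spherical link-models $\hat\Phi_i$ can be made to work, but the step ``$\Phi_i$ uniform $\iff$ $\hat\Phi_i$ uniform, because uniformity is a combinatorial-plus-symmetry condition independent of the ambient geometry'' is not a proof as stated: uniformity is \emph{not} geometry-independent in general (a non-square rectangle is vertex-transitive yet not regular). One must check that the radial projection from $V_i$ intertwines the full isometry groups of $\Phi_i$ and $\hat\Phi_i$ (easy, since both fix $V_i$ and act through $O(T_{V_i}\matX^n)$), and then recursively for all of their faces---which do \emph{not} contain $V_i$ and so require a separate center at each stage. This can be carried out, but the paper's edge-length reduction bypasses the whole issue in one stroke.
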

\begin{proof}
By construction the isometry group of the tessellation acts transitively on the vertices, and also on the vertices of each face of the tessellation (fixing the face). By induction on $n$ one sees that such a tessellation is uniform if and only if all the edges have the same length, and this holds precisely when $p$ is well positioned.
\end{proof}

\begin{ex}
If $P\subset \matH^n$ has one ideal vertex $v$ and the seed $p$ is at $v$, the resulting tessellation $T$ is uniform.
\end{ex}


\subsection{Coxeter -- Wythoff diagrams}
We now translate everything into some appropriate diagrams, that will enable us to apply the Wythoff machinery in a simple and systematic way.

A \emph{Coxeter -- Wythoff diagram} is a diagram $D$ of an irreducible Coxeter simplex $P\subset \matX^n$ with some (at least one) encircled nodes. If $P$ is non-compact hyperbolic, we require that $P$ has only one ideal vertex $v$ and $D$ has only one encircled node, corresponding to the facet opposite to $v$.

The encircled nodes determine a seed point $p$ in $P$. In the ideal case, we set $p=v$. In the compact case $p$ is the well positioned point in the face that is the intersection of the facets corresponding to the unencircled nodes. The point $p$ is thus equidistant from the facets corresponding to the encircled nodes. See some examples in Figure \ref{Wythoffian:fig}.

\begin{figure}
 \begin{center}
\centering
\labellist
\small\hair 2pt
\endlabellist
  \includegraphics[width = 12.5 cm]{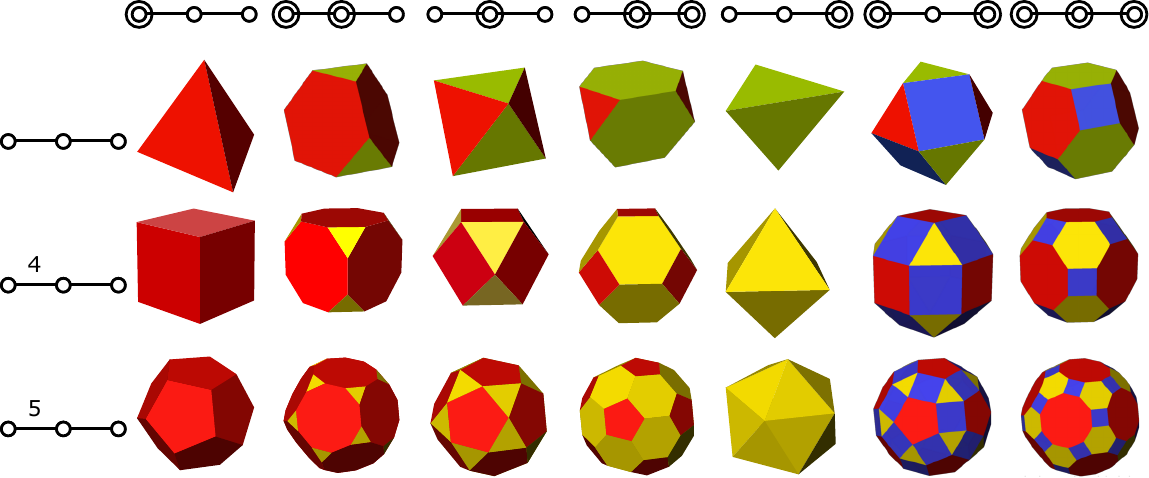}
 \end{center}
 \caption{The uniform polyhedra obtained from the Wythoff construction from a spherical diagram with three nodes.}  \label{Wythoff:fig}
\end{figure}

A Coxeter -- Wythoff diagram $D$ determines a Coxeter polyhedron $P$ and a well-positioned seed $p$, and therefore a uniform tessellation $T$ of $\matX^n$ by applying the Wythoff construction. 
When $\matX^n = \matS^n$ we interpret the uniform tessellation $T$ of $\matS^n$ as a uniform polyhedron $Q$ in $\matR^{n+1}$, by taking the convex hull of its vertices. 
The reader may verify that the spherical Coxeter -- Wythoff diagrams with three nodes produce precisely the uniform polyhedra in $\matR^3$ shown in Figure \ref{Wythoff:fig}.

\subsection{Subdiagrams describe faces}
We now introduce a simple combinatorial method to perfectly understand the face structure of a polyhedron or tessellation produced by a Wythoff construction.

Let $D$ be a Coxeter -- Wythoff diagram, producing a uniform tessellation $T$ of $\matX^n$. A \emph{Coxeter -- Wythoff subdiagram} of $D$ is a proper Coxeter subdiagram $D'\subset D$ such that each connected component of $D'$ contains at least one encircled node. 

A Coxeter -- Wythoff subdiagram $D'\subset D$ with $h$ nodes represents the spherical link of some codimension-$h$ face $f$ of $P$, and it also determines a uniform polyhedron $P_f\subset \matR^{h}$ that is the product of the Euclidean polyhedra produced (via the Wythoff construction) by each connected component of $D'$.
We can check that the tessellation $T$ has a $h$-face orthogonally transverse to $f$ and combinatorially equivalent to $P_f$, and that every face of $T$, considered up to the action of $\Gamma$, arises uniquely in this way. See Figure \ref{Wythoffian:fig} for some examples. We summarize our discoveries:


\begin{prop}
The $h$-faces of $T$, considered up to the action of $\Gamma$, are in natural bijection with the Coxeter -- Wythoff subdiagrams of $D$ with $h$ nodes.

The flags in $T$, considered up to the action of $\Gamma$, are in bijection with the chains $D_1 \subset \cdots \subset D_n=D$, where $D_i$ is a Coxeter -- Wythoff subdiagram with $i$ nodes.
\end{prop}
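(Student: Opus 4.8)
The plan is to turn the correspondence sketched just above into an honest bijection and then obtain the flag statement by iteration. Throughout I use that the copies $g(P)$, $g \in \Gamma$, tile $\matX^n$, so every face of $T$ has a unique $\Gamma$-representative based in $P$, and that by Exercise \ref{G:ex} a reducible spherical diagram corresponds to a join of spherical simplexes; I argue in the compact case, the one-ideal-vertex case being identical after replacing the spherical link by the horospherical one of Example \ref{ideal:ex}. The \emph{forward map} sends a Coxeter--Wythoff subdiagram $D' \subset D$ with $h$ nodes to an $h$-face of $T$ as follows: its nodes name $h$ of the $n+1$ facets of the simplex $P$, and since any proper set of facets of a simplex meets in a face, these cut out a codimension-$h$ face $f$ of $P$; by Lemma \ref{bijection:lemma} the submatrix indexed by $D'$ is spherical and is the Gram matrix of the link of $f$, and the Wythoff construction in this link produces the $h$-dimensional polytope transverse to $f$.

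The key point is that the defining condition of a Coxeter--Wythoff subdiagram is \emph{exactly} the non-degeneracy of this transverse face. The seed $p$ lies on the mirror of every unencircled facet and at positive distance from every encircled one, so its image $\bar p \in \matS^{h-1}$ in the link of $f$ lies on the link-mirror of each unencircled node of $D'$ and off that of each encircled node. By Exercise \ref{G:ex} the link is the join of the connected components of $D'$, its reflection group acting blockwise and irreducibly. In the corresponding orthogonal splitting, the component of $\bar p$ in a block all of whose nodes are unencircled lies in the intersection of that block's mirrors, hence vanishes, and the orbit of $\bar p$ then fails to span that block, so the transverse face collapses. If on the other hand every component carries an encircled node, then $\bar p$ has a nonzero component in each block, and since the orbit of any nonzero vector under a finite irreducible reflection group spans the ambient space, the orbit of $\bar p$ spans $\matR^h$ and the transverse face is genuinely $h$-dimensional. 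Thus $f$ contributes an $h$-face precisely when $D'$ is a Coxeter--Wythoff subdiagram.

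Bijectivity is now formal: since $P$ is a fundamental domain, each $h$-face of $T$ has a unique $\Gamma$-representative transverse to a face of $P$, and recording the facets of $P$ containing that face returns a subdiagram which the previous step identifies as a Coxeter--Wythoff subdiagram, inverting the forward map. For flags, I fix the unique representative basing the top face $f_n$ at $P$; then each $f_i$ is transverse to a face $g_i$ of $P$ with $g_0 \supset \cdots \supset g_n$, because a larger face of $T$ is transverse to a smaller face of $P$. The first part attaches to each $f_i$ a Coxeter--Wythoff subdiagram $D_i$ with $i$ nodes, and the reversed inclusions $g_{i-1} \supset g_i$ translate into $D_{i-1} \subset D_i$; conversely any chain of Coxeter--Wythoff subdiagrams with $i$ nodes reassembles a flag, the empty diagram $D_0$ indexing the single vertex orbit.

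I expect the crux to be the middle step, namely pinning down that the transverse face attains the full dimension $h$ exactly under the encircled-node condition. Its geometric ingredient is the placement of $\bar p$ relative to the link-mirrors, and its algebraic ingredient is that a finite irreducible reflection group spans the orbit of any nonzero vector; assembling these across the join decomposition of a possibly reducible link is where the reasoning must be run most carefully. The remaining ingredients — the simplex face combinatorics, uniqueness of $\Gamma$-representatives, and the matching of inclusions along a flag — are routine bookkeeping once this geometric fact is secured.
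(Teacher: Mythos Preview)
The paper does not actually prove this proposition: the text preceding it describes the forward map informally and then says ``We can check that the tessellation $T$ has a $h$-face orthogonally transverse to $f$ \dots\ and that every face of $T$, considered up to the action of $\Gamma$, arises uniquely in this way,'' leaving the verification to the reader. Your proposal is therefore more detailed than the paper, and the approach you take is exactly the one the paper has in mind.

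Your second paragraph---the dimension computation via the join decomposition of the link and the irreducibility of the block actions---is correct and is indeed the heart of the matter. The placement of $\bar p$ relative to the link mirrors follows as you say (for $i\in D'$ the normal $v_i$ lies in the orthogonal complement of the span of $f$, so $\langle \bar p,v_i\rangle=0$ iff $\langle p,v_i\rangle=0$ iff $i$ is unencircled), and the spanning argument goes through.

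There is however a genuine gap in your third paragraph. You write ``since $P$ is a fundamental domain, each $h$-face of $T$ has a unique $\Gamma$-representative transverse to a face of $P$,'' but the fundamental-domain property alone does not give this. What is missing is the verification that (a) the convex hull of $\Gamma_f\cdot p$ is actually a \emph{face} of $T$ (i.e.\ lies on a supporting hyperplane of the orbit polytope, not merely spans an $h$-plane), and (b) conversely every face of $T$ containing $p$ arises from a unique face $f$ of $P$ in this way. One route to (a) in the spherical picture is to take the projection $c$ of $p$ to the supporting subspace of $f$---which lands in $f$ by the non-obtuse projection property---and check that $\langle\,\cdot\,,c\rangle$ is maximised on $\Gamma\cdot p$ exactly along $\Gamma_f\cdot p$; route (b) then follows from analysing the cell decomposition of $P$ by the dual star $Y$. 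The paper leaves all of this implicit too, so your argument is no less complete than the original, but calling the step ``formal'' overstates matters.

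A minor citation issue: the fact that the submatrix indexed by $D'$ is the Gram matrix of the link of $f$ is Proposition~\ref{Gf:prop}; Lemma~\ref{bijection:lemma} is the hyperbolic bijection statement and is not what you need here.
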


As an instructive example, in Figure \ref{Wythoff:fig} the faces sharing the same colour in each polyhedron lie in the same $\Gamma$-orbit, and there are 1, 2, or 3 orbits depending on the number of Coxeter -- Wythoff subdiagrams with 2 nodes. 

\subsection{Regular polyhedra and tessellations}
We use Coxeter -- Wythoff diagrams to classify the regular polyhedra in $\matR^{n+1}$ and the regular tessellations of $\matR^n, \matH^n$. (A regular tessellation of $\matS^n$ is equivalent to a regular polyhedron in $\matR^{n+1}$.) 
Polyhedra and tessellations are always considered up to isometries in $\matH^k, \matS^k$ and similarities in $\matR^k$. A Coxeter diagram of type
\begin{center}
  \includegraphics[width = 3 cm]{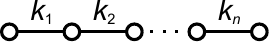}
  \vspace{-.1 cm}
\end{center}
is called \emph{linear}. The linear Coxeter diagrams are listed in Tables \ref{Coxeter-spherical:fig}, \ref{Coxeter-Euclidean:fig}, \ref{Coxeter-hyperbolic:fig}, and \ref{Coxeter-hyperbolic-ideal:fig}. 

\begin{teo} \label{linear:Coxeter:teo}
Every Coxeter -- Wythoff diagram of type
\begin{center}
  \vspace{.1 cm}
  \includegraphics[width = 3 cm]{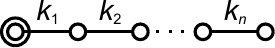}
\end{center}
determines a regular polyhedron in $\matR^{n+1}$ or a regular tessellation of $\matR^n$ or $\matH^n$.  Every regular polyhedron in $\matR^{n+1}$ or regular tessellation in $\matR^n$ or $\matH^n$ is uniquely obtained in this way.
\end{teo}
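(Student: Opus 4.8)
The plan is to prove both implications through the face--diagram dictionary of the previous subsection, treating the spherical, Euclidean, and hyperbolic cases simultaneously.

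First I would show that the displayed diagram produces a regular object. Let $D$ be the linear diagram, a path on $n+1$ nodes with a single encircled node placed at one of the two extremities, let $P\subset\matX^n$ be the corresponding irreducible Coxeter simplex with Coxeter group $\Gamma$, and let $T$ be the tessellation produced by the Wythoff construction (a polyhedron in $\matR^{n+1}$ when $\matX^n=\matS^n$). The key point is purely combinatorial: a Coxeter--Wythoff subdiagram must be connected, since every connected component must contain an encircled node and there is only one; and in a path the connected subdiagrams containing a fixed endpoint are exactly its initial segments. Hence the Coxeter--Wythoff subdiagrams of $D$ are totally ordered by inclusion, one of each size, so there is a \emph{unique} maximal chain. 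By the Proposition identifying the flags of $T$ (up to $\Gamma$) with maximal chains of Coxeter--Wythoff subdiagrams, $T$ has a single $\Gamma$-orbit of flags. Thus $\Gamma\leq\Isom(T)$ already acts transitively on flags, and $T$ is regular.

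For the converse, let $X$ be a regular polyhedron in $\matR^{n+1}$ or a regular tessellation of $\matR^n$ or $\matH^n$, and fix a flag $\Phi=(f_0\subset\cdots\subset f_n)$. The well-positioned barycenters $b_0,\ldots,b_n$ of $f_0,\ldots,f_n$ span an $n$-simplex $\Sigma\subset\matX^n$ (with $\matX^n=\matS^n$ in the polytope case, after radial projection), a top cell of the barycentric subdivision on which $\Isom(X)$ acts simply transitively: transitivity is regularity, and simplicity holds because an isometry fixing the affinely independent points $b_0,\ldots,b_n$ is the identity. For each $i$ let $\rho_i\in\Isom(X)$ be the unique isometry sending $\Phi$ to its $i$-adjacent flag, the one differing only in the $i$-face; it fixes every $b_j$ with $j\neq i$ and moves $b_i$, hence is the reflection in the hyperplane spanned by the facet of $\Sigma$ opposite $b_i$. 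Since the flag graph is connected the $\rho_i$ act transitively on flags, so simple transitivity gives $\Isom(X)=\langle\rho_0,\ldots,\rho_n\rangle$. By the characterization of discrete reflection groups as Coxeter groups, $\Sigma$ is a Coxeter simplex and $\Isom(X)$ is its Coxeter group.

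It remains to show that the diagram of $\Sigma$ is linear and that the Wythoff construction recovers $X$; the linearity is the crux. I would establish the \emph{string property} $(\rho_i\rho_j)^2=1$ whenever $|i-j|\geq 2$: the move at rank $i$ keeps the neighbouring faces $f_{i\pm 1}$ fixed while the move at rank $j$ keeps $f_{j\pm 1}$ fixed, and for $|i-j|\geq 2$ these two sets of constraints do not interact, so $\rho_i\rho_j(\Phi)=\rho_j\rho_i(\Phi)$; simple transitivity then forces $\rho_i\rho_j=\rho_j\rho_i$. Thus non-consecutive mirrors are orthogonal and the diagram is a path, with the seed $b_0=f_0$, a vertex of $\Sigma$, corresponding to an endpoint of this path. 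Finally, running the Wythoff construction on $\Sigma$ with seed $b_0$ produces a tessellation whose vertices form the $\Isom(X)$-orbit of $f_0$ and whose face lattice, read off from the linear diagram via the subdiagram--face dictionary, coincides with that of $X$; hence it equals $X$. The main obstacle is precisely the string property: translating the combinatorial independence of flag changes at distant ranks into the orthogonality of the corresponding mirrors is what pins the diagram down to a path and thereby makes the two directions match.
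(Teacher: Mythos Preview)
Your proof is correct; the forward direction is exactly the paper's (a linear diagram with one endpoint ringed has a unique Coxeter--Wythoff subdiagram of each size, hence a single $\Gamma$-orbit of flags).

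For the converse you and the paper both pass to a top simplex $\Sigma$ of the barycentric subdivision and show it is Coxeter, but then the arguments diverge. The paper does not establish the string relations $(\rho_i\rho_j)^2=1$ directly; instead it runs the forward argument backwards: since $X$ is regular there is a single $\Isom(X)$-orbit of flags, hence by the face--diagram dictionary a single chain of Coxeter--Wythoff subdiagrams, hence exactly one such subdiagram of each size $1,\dots,n$; and the only Coxeter--Wythoff diagram with that property is the linear one with an endpoint ringed. This is shorter and pleasingly symmetric with the first half of the proof. Your route via the string property is the classical one from the theory of regular polytopes; it is more explicit about the geometry (each $\rho_i$ is visibly the reflection in a facet of $\Sigma$, and commutation of non-adjacent $\rho_i,\rho_j$ \emph{is} orthogonality of their mirrors), at the cost of a slightly longer computation. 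One step you should spell out: $\rho_j(\Phi^i)=(\Phi^i)^j$ is not immediate from the definition of $\rho_j$, which only prescribes $\rho_j(\Phi)=\Phi^j$; it holds because for $|i-j|\ge 2$ the isometry $\rho_j$ fixes $f_{i-1}$ and $f_{i+1}$, hence permutes the two $i$-faces between them while already fixing $f_i$, so it must fix $f_i'$ as well.
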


\begin{proof}
The diagram contains a unique Coxeter -- Wythoff subdiagram $D_i$ with $i$ nodes for every $1\leq i\leq n$, hence a unique sequence $D_1\subset \cdots \subset D_n$, therefore the polyhedron or tessellation has a unique flag up to the action of $\Gamma$ and is regular. 

Conversely, let $X$ be a regular tessellation or polyhedron. Since it is regular, it must be preserved by reflections along the codimension 2 faces. Therefore any simplex $P$ in the barycentric subdivision is Coxeter, described by some Coxeter diagram $D$, and $X$ is obtained from $P$ by the Wythoff construction. Therefore $X$ is obtained from some Coxeter -- Wythoff diagram $D$. Regularity implies that $D$ shouold contain a single Coxeter -- Wythoff subdiagram $D_i$ with $i$ nodes for every $i$, and this easily implies that $D$ is necessarily as shown. The uniqueness of the description follows from the discussion below.
\end{proof}

The regular polyhedron in $\matR^{n+1}$ or tessellation in $\matR^n, \matH^n$ produced by the Coxeter -- Wythoff diagram of Theorem \ref{linear:Coxeter:teo} is denoted with the \emph{Schl\"afli symbol} $\{k_1,\ldots,k_n\}$. Every $h$-face of the polyhedron or tessellation is a copy of the regular polyhedron $\{k_1,\ldots,k_{h-1}\}$, actually a hyperbolic version of it if we are in $\matH^n$.
From the classification of the linear Coxeter diagrams in Theorem \ref{irreducible:Coxeter:teo} we immediately deduce the following. 

\begin{cor} \label{regular:cor}
The regular polyhedra in $\matR^{n+1}$ are:
\begin{gather*}
\{p\}, \ \{3,3\}, \ \{3,4\}, \ \{3,5\}, \ \{4,3\}, \ \{5,3\}, \\
\{3,3,3\},\ \{3,3,4\},\ \{3,3,5\},\ \{3,4,3\},\ \{4,3,3\},\ \{5,3,3\}, \\
\{3,\ldots,3\}, \ \{4,3,\ldots,3\}, \ \{3,\ldots,3,4\}
\end{gather*}
with $p\geq 3$. The regular tessellations of $\matR^n$ are:
\begin{gather*}
\{\infty\}, \ \{3,6\}, \ \{4,4\}, \  \{6,3\}, \  \{4,3,4\}, \\  
\{3,3,4,3\}, \  \{3,4,3,3\}, \  \{4,3,3,4\}, \  \{4,3, \ldots, 3,4\}.
\end{gather*} 
The regular tessellations of $\matH^n$ with compact polyhedra are:
\begin{gather*}
\{p,q\}, \  \{3,5,3\},\ \{4,3,5\}, \ \{5,3,4\}, \ \{5,3,5\}, \\ 
\{3,3,3,5\}, \ \{4,3,3,5\}, \ \{5,3,3,3\}, \ \{5,3,3,4\}, \ \{5,3,3,5\} 
\end{gather*}
with $(p-2)(q-2)>4$. The regular tessellations of $\matH^n$ with ideal polyhedra are:
$$\{p,\infty\}, \ \{3,3,6\}, \ \{3,4,4\}, \ \{4,3,6\}, \ \{5,3,6\}, \ \{3,4,3,4\}, \ \{3,3,3,4,3\} $$
with $p\geq 3$.
\end{cor}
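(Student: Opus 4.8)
The whole statement is a corollary of Theorem \ref{linear:Coxeter:teo}, which says that every regular polyhedron or tessellation arises by encircling an endpoint node of a \emph{linear} Coxeter diagram (a path on $n+1$ nodes with successive edge labels $k_1,\ldots,k_n$), carrying the Schl\"afli symbol $\{k_1,\ldots,k_n\}$, and conversely. So the plan is purely to sift the classification of irreducible Coxeter simplices in Theorem \ref{irreducible:Coxeter:teo} (Figures \ref{Coxeter-spherical:fig}--\ref{Coxeter-hyperbolic-ideal2:fig}) for the diagrams that are \emph{paths}, i.e.\ contain no branch node, to record the two dual symbols obtained by encircling the two endpoints (one symbol when the path is palindromic), and to sort the outcome by geometry.

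First I would run this sieve on the spherical diagrams. The linear ones are the two-node dihedral diagram, the two families $A_n$ (a plain path) and $B_n$ (a path ending in a label $4$), and the exceptional $F_4,H_3,H_4$; everything with a branch ($D_n,E_6,E_7,E_8$) is discarded. This yields $\{p\}$ in $\matR^2$; in $\matR^3$ the five symbols $\{3,3\},\{4,3\},\{3,4\},\{5,3\},\{3,5\}$ from the three-node $A_3,B_3,H_3$; in $\matR^4$ the six symbols $\{3,3,3\},\ldots,\{5,3,3\}$ from the four-node $A_4,B_4,F_4,H_4$; and for $n\geq 5$ only the simplex $\{3,\ldots,3\}$ and the dual pair $\{4,3,\ldots,3\},\{3,\ldots,3,4\}$, since $F$ and $H$ have no higher members. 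The identical sieve on the affine diagrams keeps $\widetilde A_1$ (giving $\{\infty\}$), the self-dual family $\widetilde C_n$ (giving $\{4,3,\ldots,3,4\}$, with small members $\{4,4\}$ and $\{4,3,4\}$), $\widetilde G_2$ (giving $\{3,6\},\{6,3\}$) and $\widetilde F_4$ (giving $\{3,3,4,3\},\{3,4,3,3\}$), while the branched $\widetilde B,\widetilde D,\widetilde E$ and the cyclic $\widetilde A_n$ for $n\geq 2$ are excluded; this reproduces the Euclidean list verbatim.

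The hyperbolic case needs the extra compact-versus-ideal dichotomy. For a path diagram with one endpoint encircled, the Wythoff seed $p$ is the opposite vertex of the simplex, and by Lemma \ref{bijection:lemma} $p$ is real or ideal according as the vertex-link subdiagram obtained by deleting that endpoint --- which is exactly the vertex figure $\{k_2,\ldots,k_n\}$ --- is spherical or Euclidean. I would therefore first collect the linear compact (Lann\'er) diagrams of Figure \ref{Coxeter-hyperbolic:fig}, in which every vertex link is spherical: these are the hyperbolic triangles $\{p,q\}$ (hyperbolic precisely when $\pi/p+\pi/q+\pi/2<\pi$, i.e.\ $(p-2)(q-2)>4$, read off from the sign of the Gram determinant exactly as in the triangle computation above), the four linear Lann\'er tetrahedra giving $\{3,5,3\},\{4,3,5\},\{5,3,4\},\{5,3,5\}$, and the five linear Lann\'er $4$-simplices giving $\{3,3,3,5\},\{5,3,3,3\},\{4,3,3,5\},\{5,3,3,4\},\{5,3,3,5\}$; nothing survives past dimension $4$ since Figure \ref{Coxeter-hyperbolic:fig} lists no compact hyperbolic simplex there. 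Then I would collect the linear non-compact diagrams of Figures \ref{Coxeter-hyperbolic-ideal:fig}--\ref{Coxeter-hyperbolic-ideal2:fig} for which deleting the encircled endpoint leaves a Euclidean subdiagram while the cell subdiagram (delete the other endpoint) stays spherical, so that $P$ carries a single ideal vertex placed at the seed as in Example \ref{ideal:ex}: requiring the vertex figure to be a linear Euclidean honeycomb and the cell to be a finite polytope pins these down to $\{p,\infty\}$, the four three-dimensional honeycombs $\{3,3,6\},\{3,4,4\},\{4,3,6\},\{5,3,6\}$, the single $\{3,4,3,4\}$ in dimension $4$, and the single $\{3,3,3,4,3\}$ in dimension $5$, the list terminating because for $n\geq 6$ no finite polytope $\{k_1,\ldots,k_{n-1}\}$ can serve as a cell over a linear Euclidean vertex figure $\{k_2,\ldots,k_n\}$.

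I expect no conceptual obstacle, only the discipline of the enumeration: the genuine care lies in not overlooking a path hidden inside a branched figure, in listing exactly both members of each dual pair (and precisely one for a palindrome such as $A_n$, $\widetilde C_n$, $F_4$, $\{3,5,3\}$, or $\{5,3,5\}$), and in correctly deciding the borderline geometry of a triangle or tetrahedron from the determinant of its $3\times 3$ or $4\times 4$ Gram matrix --- the step that produces the constraint $(p-2)(q-2)>4$ --- together with confirming, in the ideal case, that exactly one vertex link is Euclidean so that the seed sits at a genuine ideal vertex and the tiles are indeed ideal.
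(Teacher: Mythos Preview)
Your proposal is correct and follows essentially the same approach as the paper: invoke Theorem \ref{linear:Coxeter:teo} to reduce to linear diagrams, then sift Figures \ref{Coxeter-spherical:fig}--\ref{Coxeter-hyperbolic-ideal2:fig} for paths, with the extra requirement in the ideal case that the non-encircled nodes form the unique Euclidean subdiagram. The paper's own proof is a one-sentence pointer to these figures, whereas you have spelled out the enumeration in full; the content is the same.
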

\begin{proof}
These arise from the linear Coxeter diagrams in Figures \ref{Coxeter-spherical:fig}, \ref{Coxeter-Euclidean:fig}, \ref{Coxeter-hyperbolic:fig}, \ref{Coxeter-hyperbolic-ideal:fig}, \ref{Coxeter-hyperbolic-ideal2:fig}. In the ideal case we should consider only Coxeter diagrams with one red extremal vertex, and this vertex should be precisely the encircled one. 
\end{proof}

The Schl\"afli symbol $\{k_1,\ldots, k_n\}$ encodes nicely the combinatorial properties of the regular polyhedron or tessellation. Its facets are copies of the regular polyhedron $\{k_1,\ldots, k_{n-1}\}$, with $k_n$ of them meeting at each codimension 3 face. If there are no ideal vertices, the inverted symbol $\{k_n, \ldots, k_1\}$ describes a combinatorially dual polyhedron or tessellation, sharing the same original Coxeter simplex, but obtained via the Wythoff construction with a different seed. We now describe all these regular objects in more detail.

\subsubsection{Regular polyhedra}
The Euclidean regular polyhedra were classified in all dimensions by Schl\"afli \cite{S}, and a standard beautiful reference is Coxeter \cite{C2}. The polyhedra $\{3,3\}$, $\{3,4\}$, $\{3,5\}$, $\{4,3\}$, $\{5,3\}$ are the regular tetrahedron, octahedron, icosahedron, cube, and dodecahedron in Figure \ref{regular:fig}. The three infinite families
$$\{3,\ldots,3\}, \ \{4,3,\ldots,3\}, \ \{3,\ldots,3,4\}$$
describe respectively the regular $n$-simplex, the $n$-cube, and the $n$-cross-polytope, that is dual to the $n$-cube, and is the convex hull of $\pm e_1, \ldots, \pm e_n$ in $\matR^n$.

\begin{figure}
 \begin{center}
  \includegraphics[width = 12.5 cm]{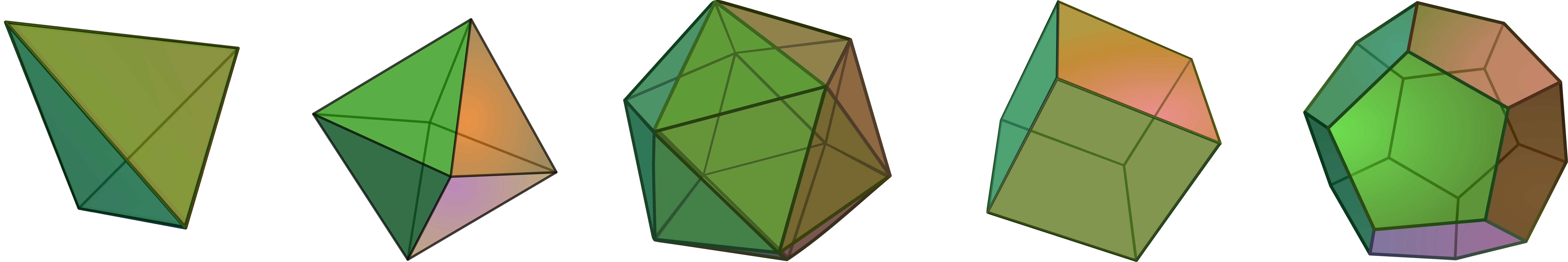}
 \end{center}
 \caption{The five regular Euclidean three-dimensional polyhedra.}  \label{regular:fig}
\end{figure}

\begin{figure}
 \begin{center}
  \includegraphics[width = 6 cm]{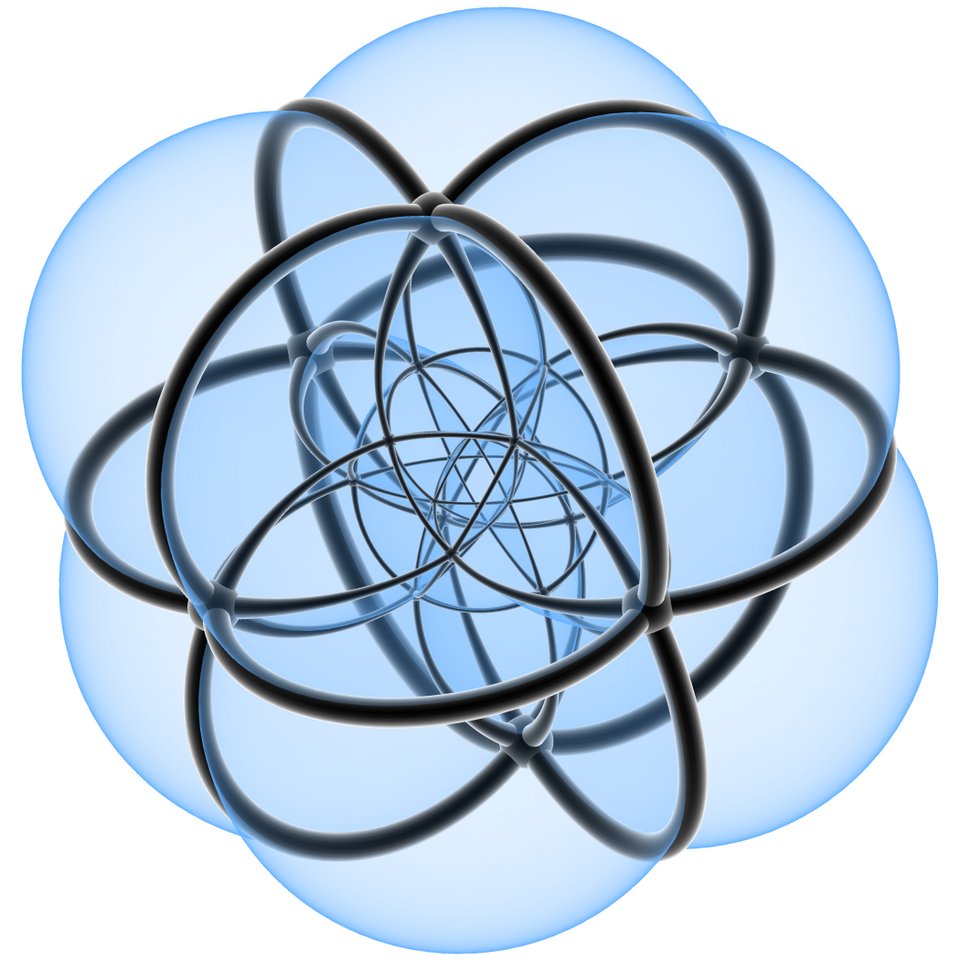}
  \includegraphics[width = 6 cm]{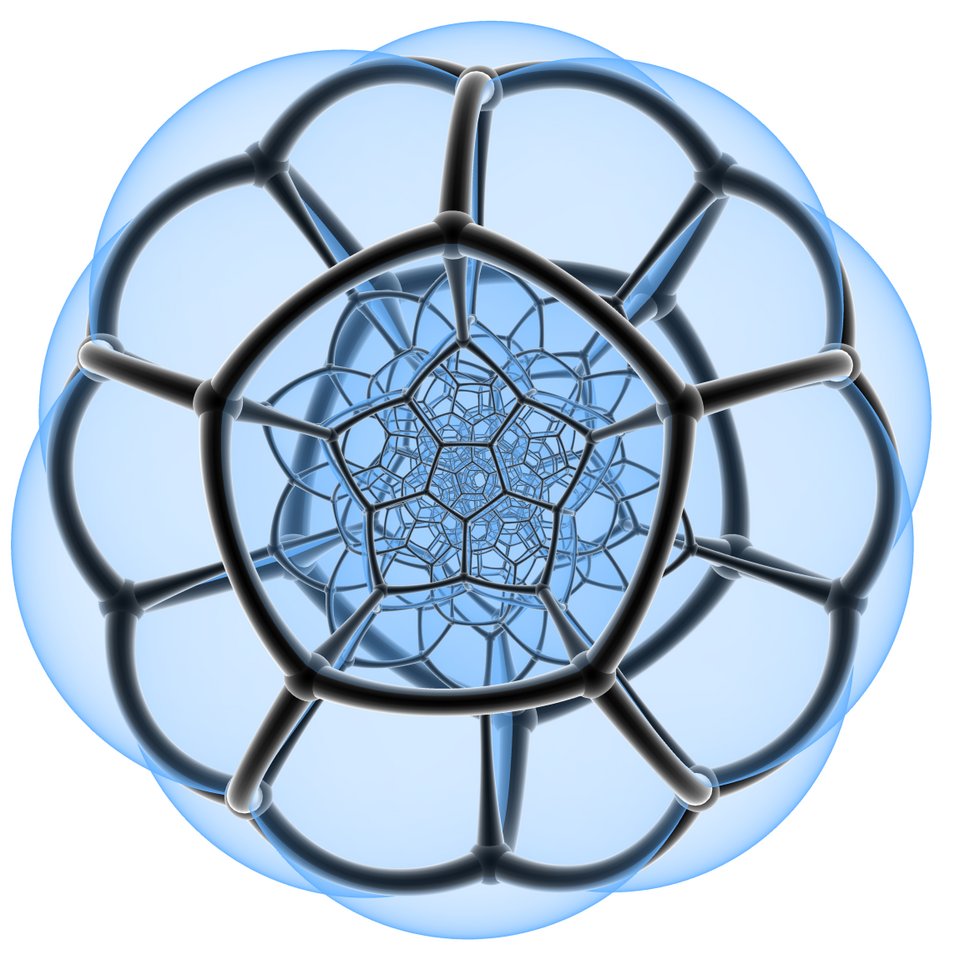}
 \end{center}
 \caption{The tessellations of $\matS^3$ into 24 octahedra (left) and 120 dodecahedra (right) given by the 24-cell and the 120-cell, respectively. The figure shows their stereographic projections in $\matR^3$, hence faces are spherical.}  \label{cells:fig}
\end{figure}

In dimension 4 there are three additional regular polyhedra
$$\{3,3,5\},\ \{3,4,3\},\ \{5,3,3\}$$
called respectively the \emph{600-cell}, the \emph{24-cell}, and the \emph{120-cell}. They can be elegantly defined using quaternions. The unit quaternions $S^3 \subset \matR^4$ contain the \emph{binary tetrahedral} and \emph{binary icosahedral subgroups} $T_{24}^* <I_{120}^*$. The 24-cell and the 600-cell are the convex hulls of these groups. The 120-cell is the dual of the 600-cell.

The 24-cell has 24 octahedral facets and 24 vertices, each with a cubic link. This is the only self-dual regular polyhedron in all dimensions different from a simplex and a polygon. The 600-cell has 600 tetrahedral facets and 120 vertices, each with an icosahedral link. Conversely, the 120-cell has 120 dodecahedral facets and 600 vertices, each with a tetrahedral link. See Figure \ref{cells:fig}.

We may wonder what are the regular polyhedra in $\matH^n$ and $\matS^n$. Given its symmetries, every regular polyhedron $P\subset \matH^n$ centered at the origin in the Klein model is also regular in the Euclidean sense. Therefore a regular polyhedron in $\matH^n$ ($\matS^n$) is combinatorially like a Euclidean one, only with smaller (larger) dihedral angles. The dihedral angles vary continuously with the size of the polyhedron.

\subsubsection{Regular tessellations}
Up to similarities, the regular tessellations of $\matR^n$ are: 
\begin{itemize}
\item The tessellation $\{\infty\}$ of $\matR$ by equal segments; 
\item The tessellations $\{3,6\}$, $\{4,4\}$, $\{6,3\}$ of $\matR^2$ by triangles, squares, hexagons; 
\item The tessellation $\{4,3,\ldots,3,4\}$ of $\matR^n$ by $n$-cubes; 
\item The dual tessellations $\{3,3,4,3\}$ and $\{3,4,3,3\}$ of $\matR^4$, made respectively by cross-polytopes and 24-cells. 
\end{itemize}

The tessellations in $\matR^2$ and $\matR^3$ are shown in Figure \ref{T0:fig}. The two additional regular four-dimensional tessellations may look unexpected: the dihedral angle of the cross-polytope and of the 24-cell in $\matR^4$ is in fact indeed $2\pi/3$, and in these tessellations there are three polyhedra around each triangular ridge. The vertex links of the two tessellations form respectively the 24-cell and the hypercube.

\begin{figure} 
 \begin{center}
  \includegraphics[width = 2.5 cm]{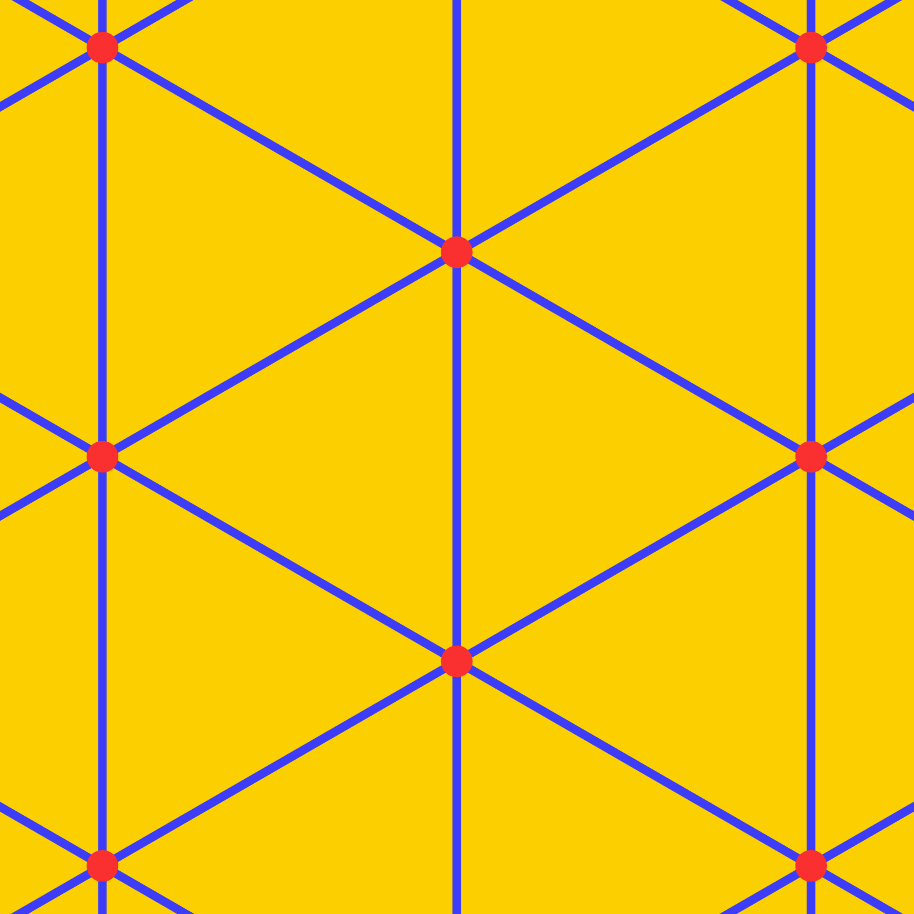}
  \includegraphics[width = 2.5 cm]{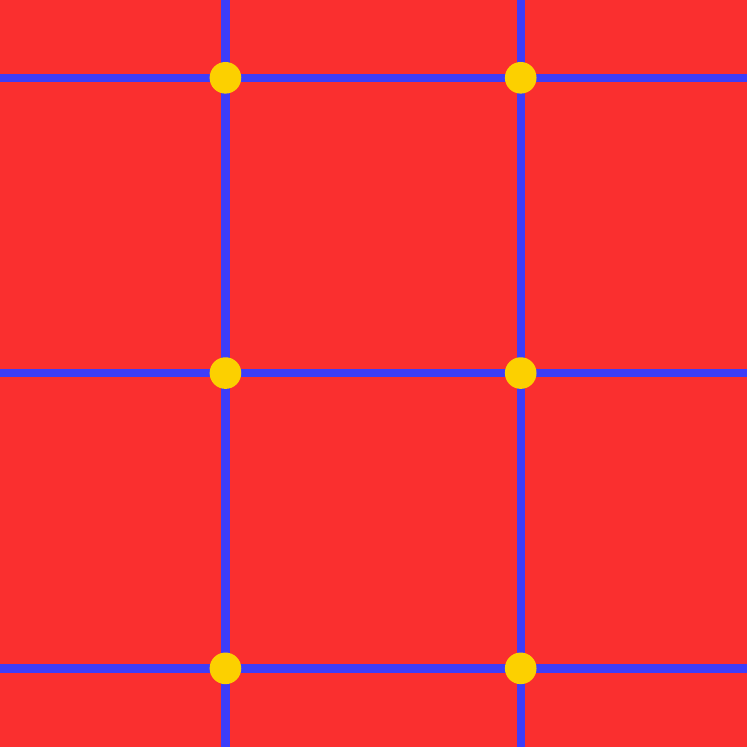}
  \includegraphics[width = 2.5 cm]{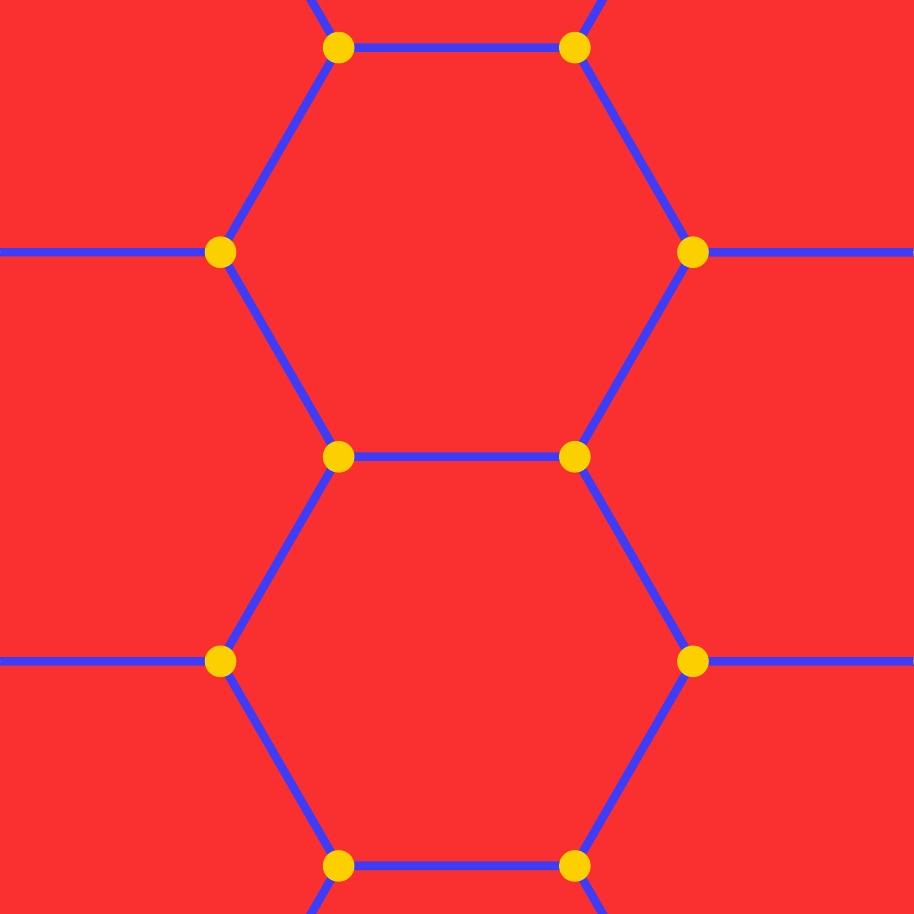}
  \includegraphics[width = 2.5 cm]{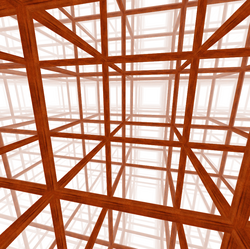}
 \end{center}
 \caption{The three regular tessellations $\{3,6\}, \{4,4\}, \{6,3\}$ of $\matR^2$ and the cubic tessellation $\{4,3,4\}$ of $\matR^3$.}  \label{T0:fig}
\end{figure}

Up to isometries, the regular tessellations of $\matH^n$ are:
\begin{itemize}
\item The tesselations $\{p,q\}$ of $\matH^2$ by regular polygons with $(p-2)(q-2)>4$;
\item The tesselations $\{p,\infty\}$ of $\matH^2$ by ideal regular polygons with $p\geq 3$;
\item The tessellations $\{3,5,3\}$, $\{4,3,5\}$, $\{5,3,4\}$, $\{5,3,5\}$,
$\{3,3,6\}$, $\{3,4,4\}$, $\{4,3,6\}$, $\{5,3,6\}$ of $\matH^3$ by all the 5 regular polyhedra;
\item The tessellations $\{3,3,3,5\}$, $\{4,3,3,5\}$, $\{5,3,3,3\}$, $\{5,3,3,4\}$, $\{5,3,3,5\}$, $\{3,4,3,4\}$ of $\matH^4$ by simplexes, hypercubes, 120-cells (3 times), and 24-cells;
\item The tessellation $\{3,3,3,4,3\}$ of $\matH^5$ made by ideal cross-polytopes.
\end{itemize}

\begin{figure} 
 \begin{center}
  \includegraphics[width = 3 cm]{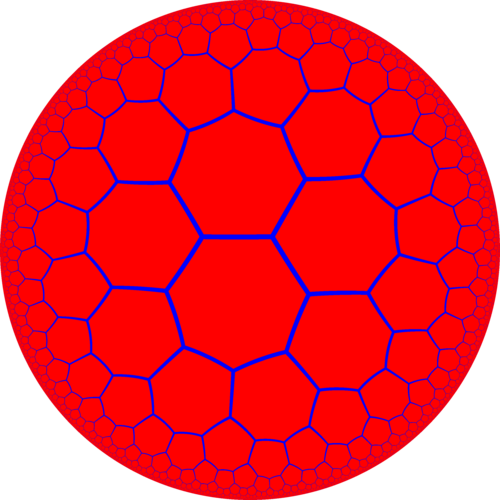}
  \includegraphics[width = 3 cm]{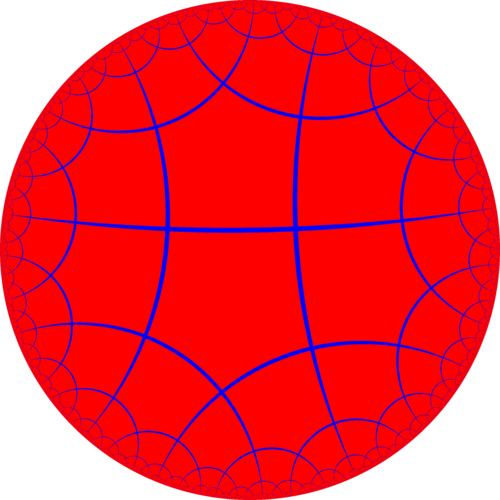}
  \includegraphics[width = 3 cm]{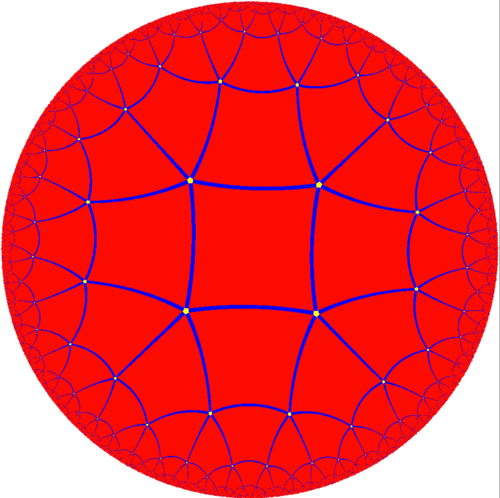}
  \includegraphics[width = 3 cm]{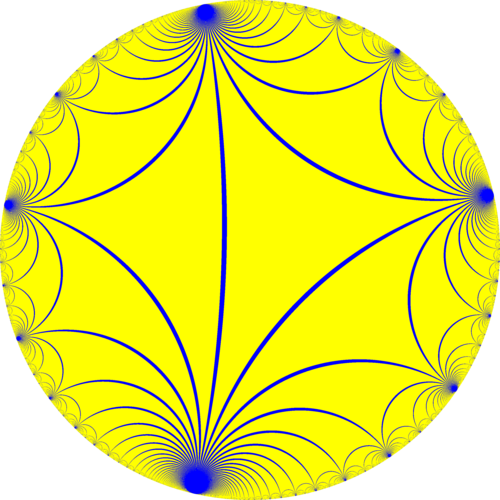}
 \end{center}
 \caption{The tessellations $\{7,3\}$, $\{5,4\}$, $\{4,5\}$, and $\{3,\infty\}$ of $\matH^2$.
 }  \label{H2:fig}
\end{figure}

\begin{figure} 
 \begin{center}
  \includegraphics[width = 5.5 cm]{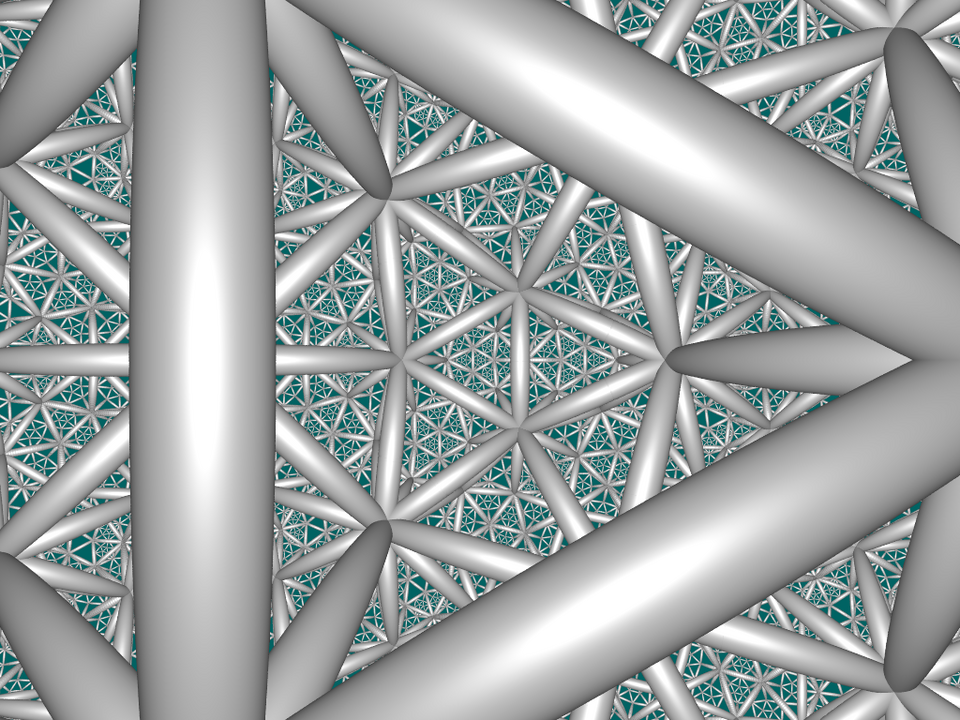} \quad
  \includegraphics[width = 5.5 cm]{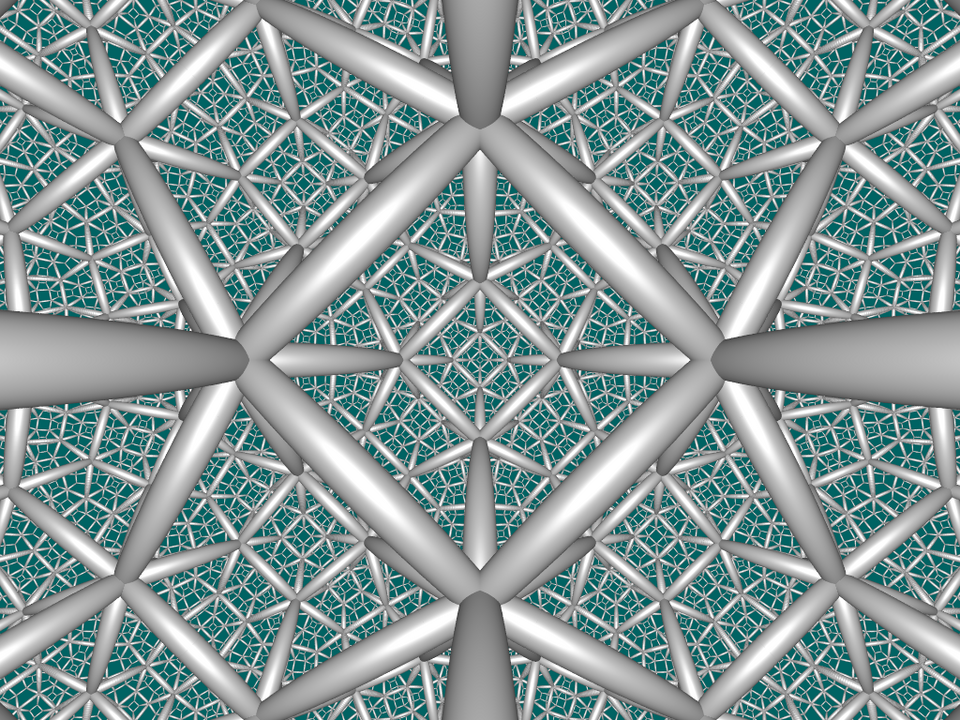}\\
  \vspace{.3 cm}
  \includegraphics[width = 5.5 cm]{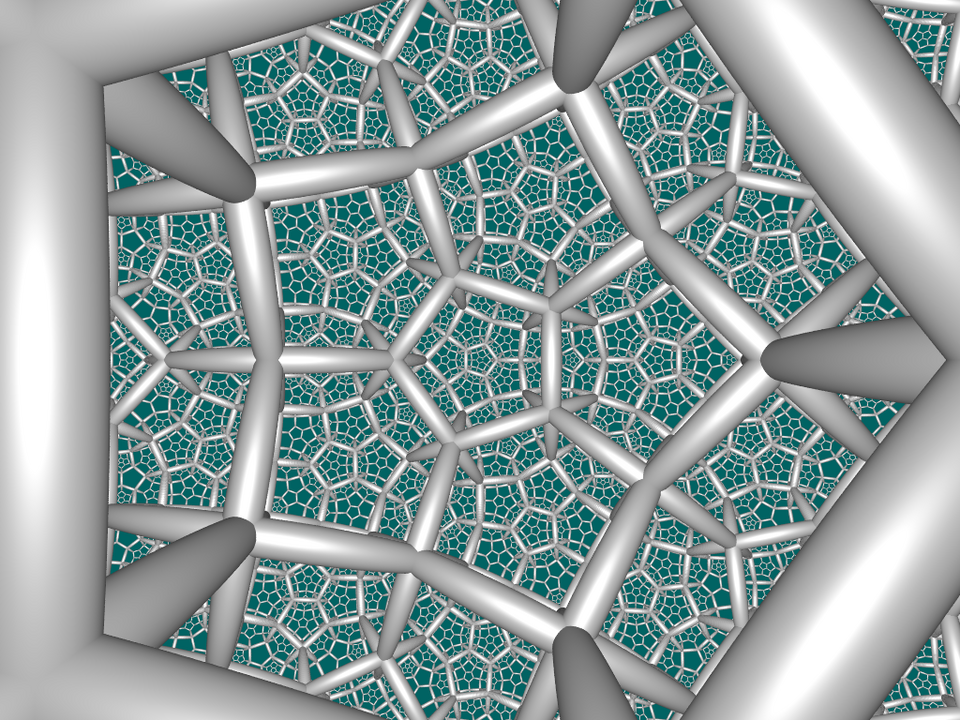} \quad
  \includegraphics[width = 5.5 cm]{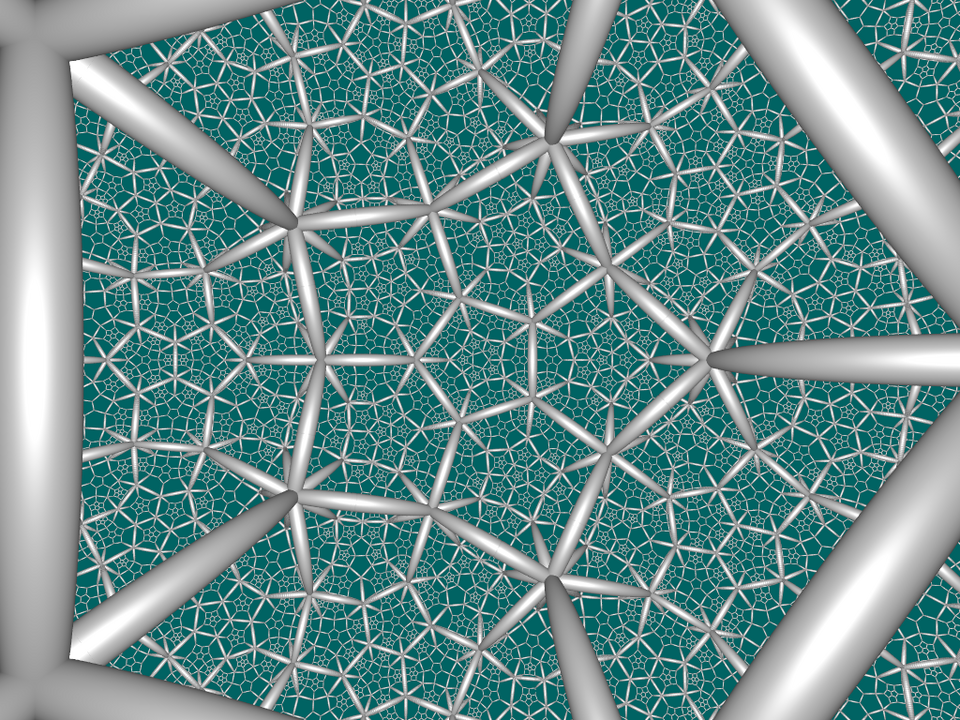} \\
  \vspace{.3 cm}
  \includegraphics[width = 5.5 cm]{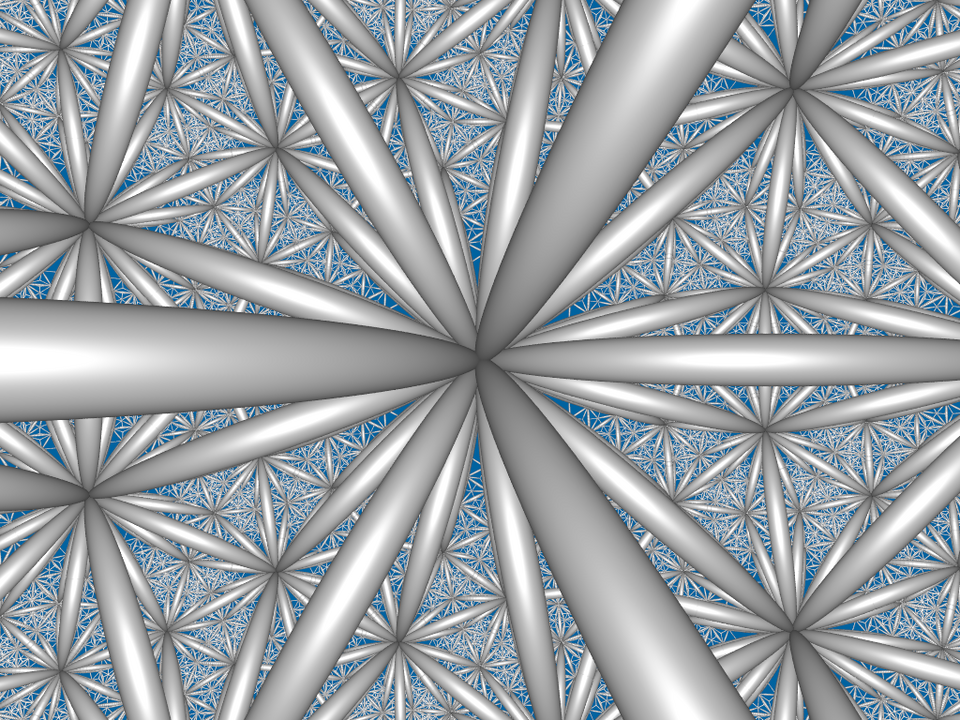} \quad
  \includegraphics[width = 5.5 cm]{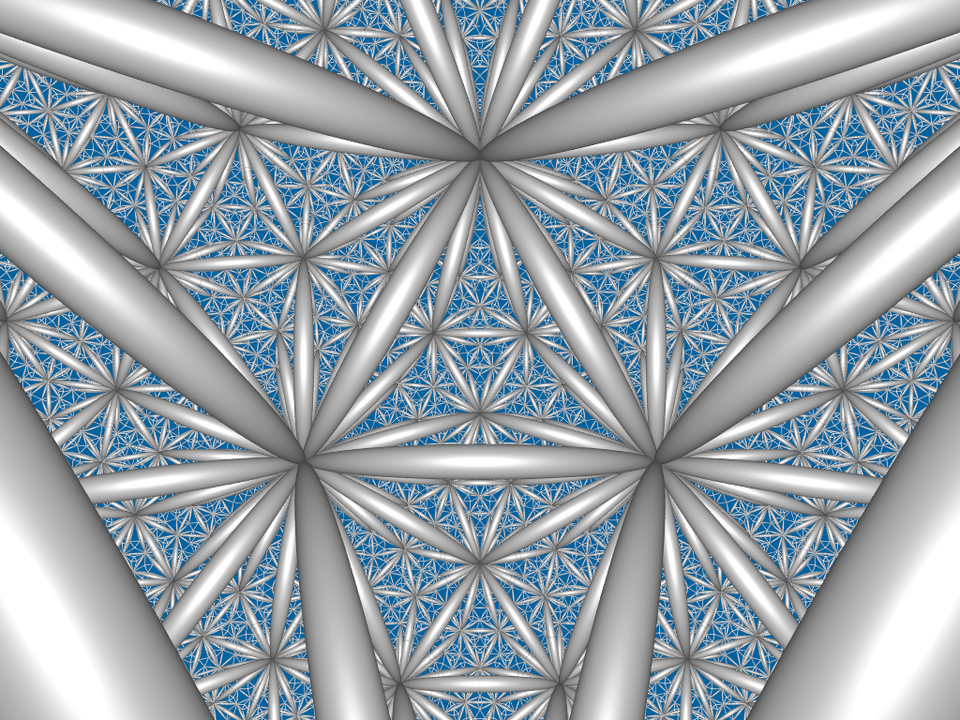}\\
  \vspace{.3 cm}
  \includegraphics[width = 5.5 cm]{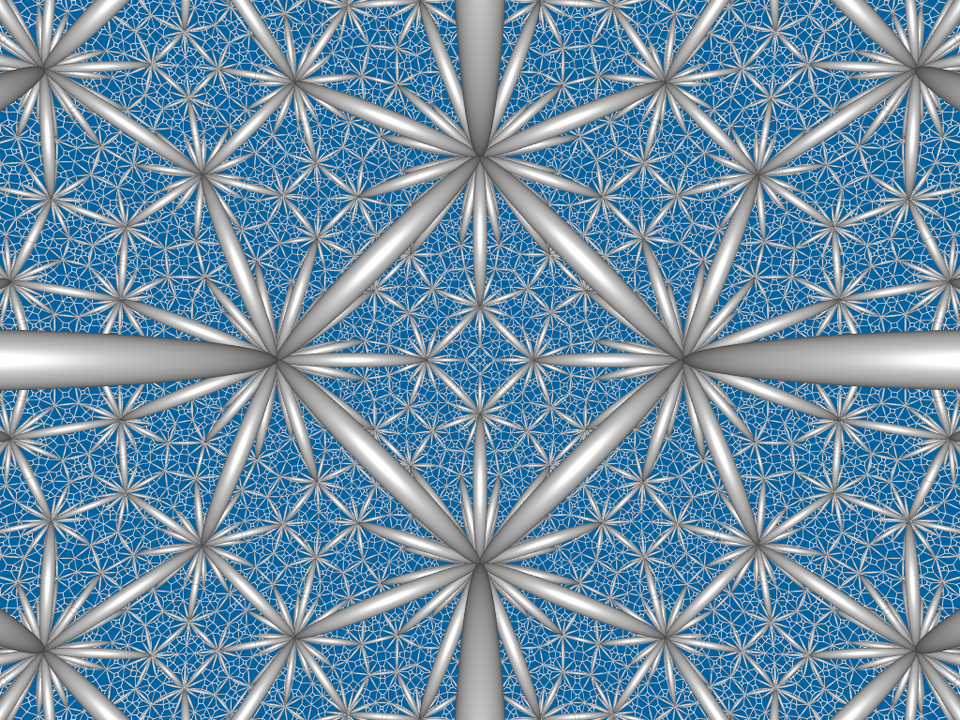} \quad
  \includegraphics[width = 5.5 cm]{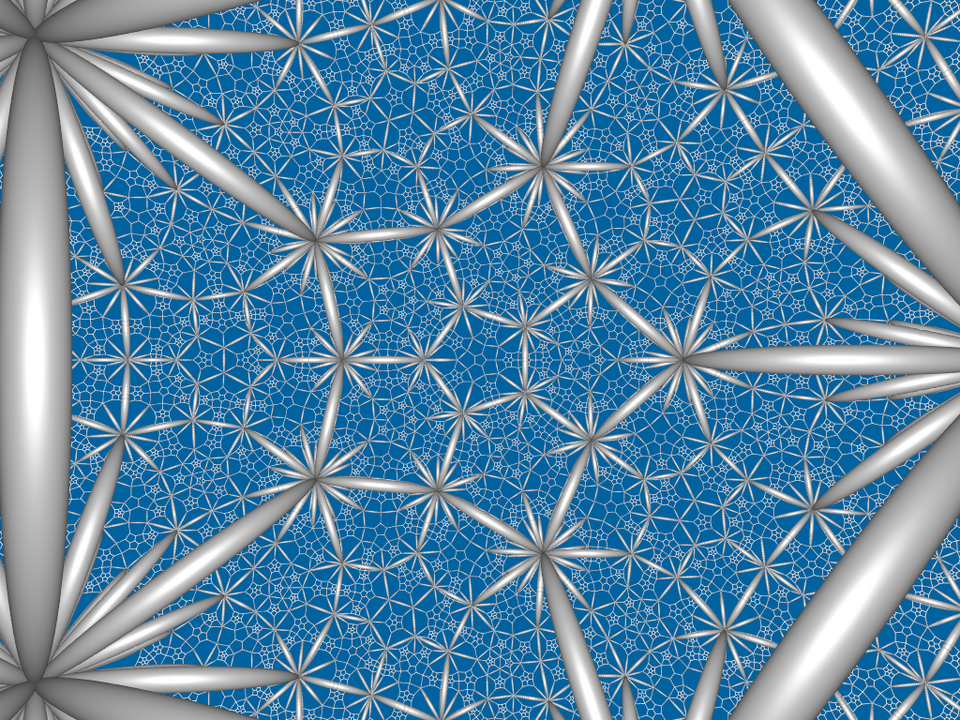} 
 \end{center}
 \caption{The tessellations $\{3,5,3\}$, $\{4,3,5\}$, $\{5,3,4\}$, $\{5,3,5\}$ of $\matH^3$ into compact icosahedra, cube, dodecahedra, dodecahedra. The tessellations 
$\{3,3,6\}$, $\{3,4,4\}$, $\{4,3,6\}$, $\{5,3,6\}$ of $\matH^3$ into ideal tetrahedra, octahedra, cube, dodecahedra.
 }  \label{H:fig}
\end{figure}

Some regular tessellations of $\matH^2$ are shown in Figure \ref{H2:fig}.
The 8 regular tessellations of $\matH^3$ are shown in Figure \ref{H:fig}. In $\matH^4$ we have one tessellation by compact simplexes with dihedral angles $2\pi/5$, one by compact hypercubes with dihedral angles $2\pi/5$, three by compact 120-cells with dihedral angles $2\pi/3, \pi/2, 2\pi/5$ respectively, and one by ideal 24-cells with dihedral angle $\pi/2$. The latter induces on every horosphere centered at some ideal point the cubic tessellation of $\matR^3$. Finally, in $\matH^5$ we have one tessellation by ideal cross-polytopes with dihedral angles $2\pi/3$, which induces on the horospheres centered at the ideal vertices the tessellation of $\matR^4$ by cross-polytopes mentioned above.

\begin{table} 
\begin{center}
\begin{tabular}{c||cccc}
\phantom{\Big|} polyhedron & $\theta = \frac \pi 3$ & $\theta = \frac{2\pi}5$ & $\theta = \frac \pi 2$ &
$\theta = \frac{2\pi}3$ \\
\hline\hline
\rule{0pt}{3ex}
\phantom{\Big|}  tetrahedron & ideal $\matH^3$ & $\matS^3$ & $\matS^3$ & $\matS^3$ \\
\phantom{\Big|}  cube & ideal $\matH^3$ & $\matH^3$ & $\matR^3$ & $\matS^3$ \\
\phantom{\Big|}  octahedron & & & ideal $\matH^3$ & $\matS^3$ \\
\phantom{\Big|} icosahedron & & & & $\matH^3$ \\
\phantom{\Big|} dodecahedron & ideal $\matH^3$ & $\matH^3$ & $\matH^3$ & $\matS^3$ \\
\hline
\phantom{\Big|}  4-simplex & & $\matH^4$ & $\matS^4$ & $\matS^4$ \\
\phantom{\Big|}  4-cube & & $\matH^4$ & $\matR^4$ & $\matS^4$ \\
\phantom{\Big|}  4-cross & & & & $\matR^4$ \\
\phantom{\Big|}  24-cell & & & ideal $\matH^4$ & $\matR^4$ \\
\phantom{\Big|}  120-cell & & $\matH^4$ & $\matH^4$ & $\matH^4$ \\
\hline
\phantom{\Big|}  5-cross & & & & ideal $\matH^5$ \\
\hline
\phantom{\Big|}  $n$-simplex & & & $\matS^n$ & $\matS^n$ \\
\phantom{\Big|}  $n$-cube & &  & $\matR^n$ & $\matS^n$ 
\end{tabular}
\end{center}
\vspace{.3 cm}
\caption{A complete list of all the regular polyhedra in $\matX^n$ with dihedral angle $\theta=2\pi/k$ for $n\geq 3$. Each such polyhedron is a facet in a regular tessellation of $\matX^n$.}
\label{nice:regular:table}
\end{table}

Table \ref{nice:regular:table} summarises the occurrence of each regular polyhedron as a facet in a regular tessellation, with its dihedral angles. Regular tessellations in $\matS^n$ can be interpreted as regular polyhedra in $\matR^{n+1}$.

\subsection{Some exercises}

\begin{ex} \label{linear3:ex}
The Coxeter -- Wythoff linear diagrams
 \begin{center}
 \vspace{.1 cm}
  \includegraphics[width = 8 cm]{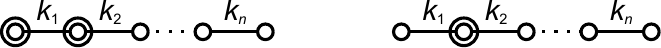}
 \end{center}
represent respectively the \emph{truncation} and the \emph{rectification} of the regular polyhedron or tessellation $\{k_1,\ldots,k_n\}$. On a polyhedron, both operations consist in cutting off appropriate isometric open star neighbourhoods of the vertices so that the resulting polyhedron has all edges of the same length: the star neighbourhoods are disjoint in a truncation and intersect in points in a rectification, see Figure \ref{Wythoff:fig}. The rectification can also be defined as the convex hull of the midpoints of the edges. On a tessellation, these star neighbourhoods are not removed and yield new facets.
\end{ex}

\begin{ex}
Consider the Coxeter -- Wythoff linear diagram with $n$ nodes
 \begin{center}
 \vspace{.1 cm}
  \includegraphics[width = 2.7 cm]{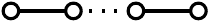}
 \end{center}
some of them being encircled. The \emph{seed vector} $v = (a_1,\ldots, a_{n+1}) \in \matR^{n+1}$ is defined by setting $a_1=0$, and recursively $a_{i+1}$ equals $a_i+1$ if the $i$-th node is encircled, and $a_i$ if it is not. The polyhedron determined by the Coxeter -- Wythoff diagram is the convex hull of the vertices obtained by permuting the coordinates of $v$. It is a polyhedron in some hyperplane $x_1+\cdots + x_{n+1} = C$. In particular the diagram
 \begin{center}
 \vspace{.1 cm}
  \includegraphics[width = 3 cm]{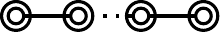}
 \end{center}
represents the $n$-\emph{permutohedron}, the convex hull of all permutations of $(0,1,\ldots,n)$. See some examples in Figure \ref{permutohedra:fig}.

The Coxeter -- Wythoff linear diagram with $n$ nodes
 \begin{center}
\labellist
\small\hair 2pt
\pinlabel $4$ at 20 15
\endlabellist
 \vspace{.1 cm}
  \includegraphics[width = 2.7 cm]{linear2}
 \end{center}
(some of which are encircled) is similar: the \emph{seed vector} $w=(c_1,\ldots, c_n) \in \matR^n$ is defined by setting $c_1=1$ if the first node is encircled, and $c_1=0$ otherwise; then $c_{i+1} = c_i + \sqrt 2$ if the $i$-th node is encircled, and $c_{i+1}=c_i$ otherwise for $i\geq 2$.
The polyhedron determined by the Coxeter -- Wythoff diagram
is the convex hull of the vertices obtained by permuting the coordinates of $w$ and changing their signs. If all the nodes are encircled we get an \emph{omnitruncated $n$-cube} as in Figure \ref{Truncated_cuboctahedron:fig}.
\end{ex}

\begin{figure}
 \begin{center}
  \includegraphics[width = 12.5 cm]{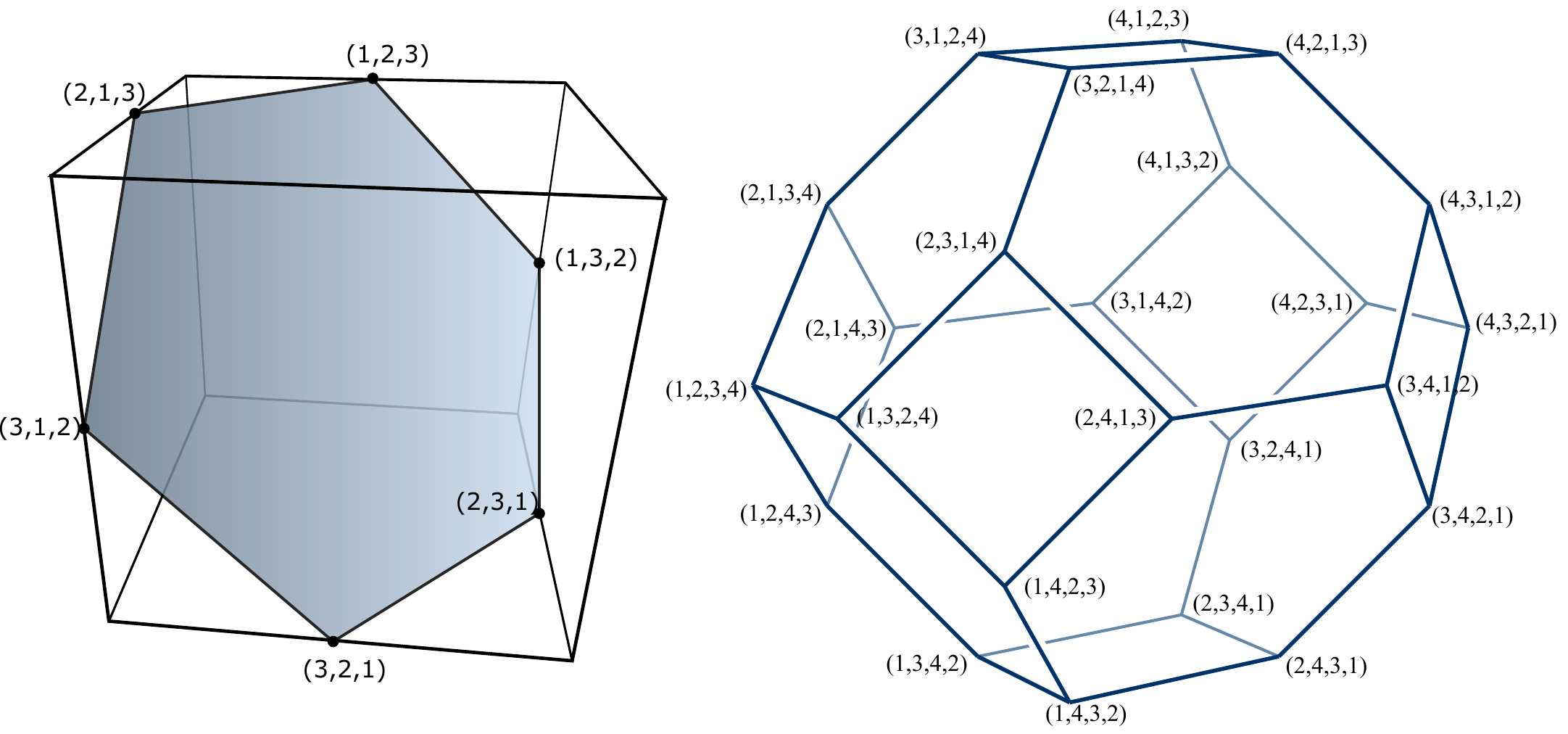}
 \end{center}
 \caption{The permutohedra in dimension $n=2,3$.}  \label{permutohedra:fig}
\end{figure}

\begin{figure}
 \begin{center}
 \labellist
\small\hair 2pt
\pinlabel $(-1-\sqrt 2,-1)$ at 20 30
\pinlabel $(-1-\sqrt 2,1)$ at 18 60
\pinlabel $(-1,1+\sqrt 2)$ at 25 95
\pinlabel $(1,1+\sqrt 2)$ at 65 95
\pinlabel $(1+\sqrt 2,-1)$ at 72 30
\pinlabel $(1+\sqrt 2,1)$ at 72 60
\pinlabel $(-1,-1-\sqrt 2)$ at 25 -5
\pinlabel $(1,-1-\sqrt 2)$ at 65 -5
\endlabellist
  \vspace{.2 cm}
  \includegraphics[width = 5.5 cm]{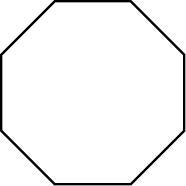}
  \hspace {.5 cm}
  \includegraphics[width = 6 cm]{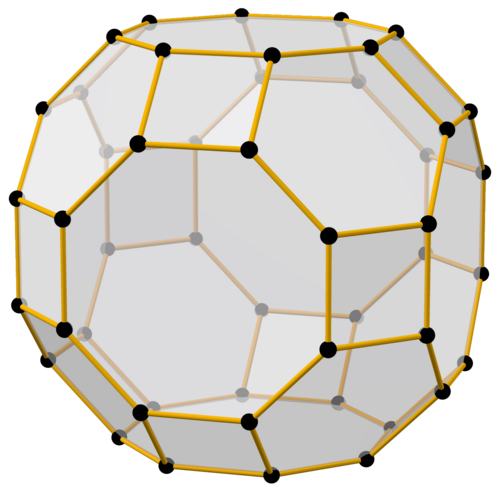}
  \vspace{.2 cm}
 \end{center}
 \caption{The omnitruncated cubes in dimension $n=2,3$.}  \label{Truncated_cuboctahedron:fig}
\end{figure}

\begin{ex} \label{demicube:ex}
The Coxeter -- Wythoff diagram with $n$ nodes
 \begin{center}
  \includegraphics[width = 3.3 cm]{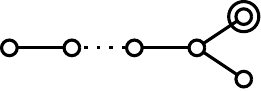}
 \end{center}
represents a $n$-\emph{demicube}, that is the convex hull of the vertices of a cube $[0,1]^n$ whose entries sum to an even number. When $n=3$ or $4$ this is a tetrahedron or cross-polytope. When $n=5$ the facets are tetrahedra and cross-polytopes, so the 5-demicube is semiregular. In general the facets are simplexes and $(n-1)$-demicubes, so the $n$-demicube is uniform but not semiregular when $n\geq 6$.
\end{ex}

\begin{ex} \label{diagonal_tess:ex}
The Euclidean Coxeter -- Wythoff circular diagram with $n$ nodes
 \begin{center}
  \includegraphics[width = 2 cm]{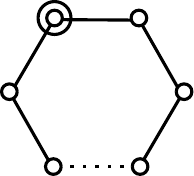}
 \end{center}
one of which is encircled, represents the Euclidean tessellation of $\matR^{n-1}$ obtained by representing $\matR^{n-1}$ as the diagonal hyperplane $H = \{x_1+\cdots +x_{n} = 0\} \subset \matR^{n}$ and intersecting it with the standard cubic tessellation of $\matR^{n}$ with vertices in $\matZ^n$. If $n=2,3,4$ the facets are respectively regular triangles, regular tetrahedra and octahedra, and regular 4-simplexes and rectified 4-simplexes. The tessellation is regular for $n=2$, semiregular for $n=3$, uniform but not semiregular for $n\geq 4$.
\end{ex}

\begin{ex} \label{move:ex}
We define a move of Coxeter -- Wythoff diagrams:
 \begin{center}
\labellist
\small\hair 2pt
\pinlabel $1$ at 0 18
\pinlabel $2$ at 30 18
\pinlabel $n-1$ at 60 18
\pinlabel $n$ at 90 18
\pinlabel $n+1$ at 120 55
\pinlabel $n+2$ at 120 3
\pinlabel $1$ at 183 18
\pinlabel $2$ at 213 18
\pinlabel $n-1$ at 243 18
\pinlabel $n$ at 273 18
\pinlabel $n+1$ at 303 18
\pinlabel $n+2$ at 343 18
\endlabellist
  \includegraphics[width = 10 cm]{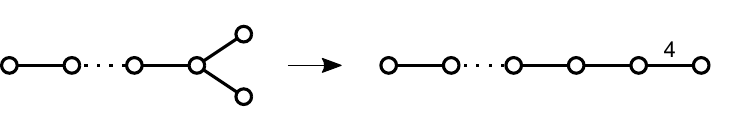}
 \end{center}
We encircle some of the $n+2$ nodes in the left diagram arbitrarily, with the only requirement that the node $n+1$ is encircled $\Longleftrightarrow$ the node $n+2$ is. We then encircle the node $i\neq n+2$ in the right diagram if and only if the corresponding node $i$ in the left is encircled, and we do not encircle $n+2$. 
This move does not modify the resulting uniform polyhedron. This holds for every $n\geq 1$, and the case $n=1$ can be deduced from Figure \ref{Wythoff:fig}.
\end{ex} 
\begin{proof}[Hint] The symmetric Coxeter simplex described by the left diagram decomposes into two smaller simplexes described by the right diagram.
\end{proof}

\subsection{Semiregular polyhedra} \label{semiregular:subsection}
We now classify all the semiregular polyhedra.

\subsubsection{Wythoffian}
We say that a uniform polyhedron or tessellation is \emph{Wythoffian} if it may be produced from a Wythoffian construction.

\begin{figure}
 \begin{center}
  \includegraphics[width = 12 cm]{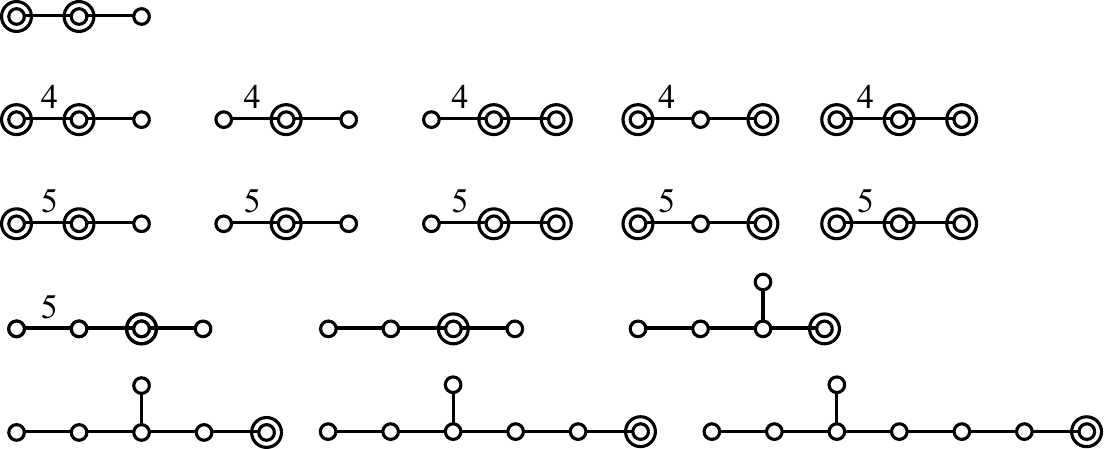}
 \end{center}
 \caption{Coxeter -- Wythoff diagrams that produce semiregular polyhedra that are not regular.}  \label{Gosset:fig}
\end{figure}

\begin{prop}
The Wythoffian semiregular polyhedra in $\matR^n$ that are not regular are precisely those produced by the Coxeter -- Wythoff diagrams in Figure \ref{Gosset:fig}.
\end{prop}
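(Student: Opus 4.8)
The plan is to translate semiregularity into a condition on the marked Coxeter--Wythoff diagram and then to read the answer off the classification of irreducible spherical Coxeter simplexes (Theorem \ref{irreducible:Coxeter:teo}). A Wythoffian polyhedron $Q\subset\matR^n$ is produced by an irreducible spherical diagram $D$ with $n$ nodes and at least one encircled node. By the subdiagram description of faces, the facets of $Q$ up to its symmetry group $\Gamma$ are exactly the subdiagrams $D\setminus\{v\}$ obtained by deleting a single node $v$ so that every connected component still carries an encircled node. Vertex-transitivity is automatic, so $Q$ is semiregular precisely when each such facet is a regular polyhedron, and it is non-regular precisely when $D$ is not a linear diagram with a single encircled endpoint (Theorem \ref{linear:Coxeter:teo}). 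The task is therefore to list the marked diagrams $D$, not of this last form, all of whose admissible one-node deletions produce regular polyhedra.

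The main structural input is that each facet is the \emph{product} of the polyhedra produced by the connected components of $D\setminus\{v\}$. A nontrivial product of positive-dimensional polyhedra (a prism, for instance) is never regular, so regularity of all facets forces every admissible deletion to be connected. Since spherical Coxeter diagrams are trees, a deletion is connected exactly when $v$ is a leaf; hence every internal-node deletion must instead be inadmissible, severing a circle-free branch. Combining this with the fact that a connected regular facet of dimension $\geq 3$ is produced by a diagram carrying a single circle (Theorem \ref{linear:Coxeter:teo} and Corollary \ref{regular:cor}), a short count shows that for $n\geq 4$ the diagram $D$ must have exactly one encircled node. When $n=3$ this obstruction evaporates, because the facets are polygons and hence automatically regular; there every rank-$3$ circling survives and one recovers the Wythoffian Archimedean solids.

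With $n\geq 4$ and a single circle fixed, the surviving condition is that deleting any uncircled leaf linearizes $D$ into a diagram producing a regular polyhedron, while every internal node cuts off a circle-free branch. I would now test this against the irreducible spherical diagrams of Figure \ref{Coxeter-spherical:fig}, family by family: only the rectified $5$-cell survives from the $A_n$ family, only the $5$-demicube from the $D_n$ family (cf.\ Exercise \ref{demicube:ex}), and among the exceptionals only $E_6$, giving the Gosset polytope $2_{21}$ with $5$-simplex and $5$-orthoplex cells. The diagrams $B_n$, $F_4$, $H_4$, $E_7$, $E_8$ are eliminated because some admissible deletion produces a non-regular cell, such as a cuboctahedron, an icosidodecahedron, or a lower Gosset polytope. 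For each surviving diagram both implications are checked: that every facet is regular (a deletion-by-deletion verification against Corollary \ref{regular:cor}) and that nothing else passes the test.

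The delicate point is that ``the facet is regular'' is strictly weaker than ``$D\setminus\{v\}$ is linear with one circled endpoint'': several exceptional isomorphisms between low-rank Coxeter groups let non-linear marked subdiagrams produce regular polyhedra. The rectified tetrahedron is the regular octahedron, and the $D_4$-demicube is the regular $16$-cell by triality; these coincidences are exactly what make the rectified simplex semiregular only in dimension $4$ and the demicube semiregular only up to dimension $5$. The Gosset series instead self-terminates: $3_{21}$ and $4_{21}$ fail because their cells are the non-regular polytopes $2_{21}$ and $3_{21}$. Locating all such coincidences and recursive obstructions, and thereby fixing the exact dimension at which each family stops being semiregular, is the crux of the argument; the rest is the bookkeeping needed to make the finite inspection of diagrams and circlings genuinely exhaustive.
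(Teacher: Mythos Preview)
Your overall strategy matches the paper's: translate semiregularity into a condition on facet subdiagrams and then inspect the finite list of irreducible spherical diagrams. However, your enumeration in dimension $n\geq 4$ is wrong in two places, and both errors stem from confusing facets with vertex figures.

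First, $3_{21}$ and $4_{21}$ are \emph{not} eliminated. You claim their cells are $2_{21}$ and $3_{21}$, but those are the vertex links, not the facets. Take $E_7$ with the single circle at the end of the long branch. The admissible leaf deletions are: removing the end of the length-$1$ branch yields $A_6$ with circle at an endpoint, a $6$-simplex; removing the end of the length-$2$ branch yields $D_6$ with the circle at the far end, which by the move of Exercise~\ref{move:ex} is the $6$-orthoplex. All internal deletions sever a circle-free branch and are inadmissible. So the facets of $3_{21}$ are simplexes and cross-polytopes, both regular, and it is semiregular; the same analysis works for $4_{21}$. This is exactly why the paper invokes Exercise~\ref{move:ex}, and it is the step you are missing. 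Second, you wrongly eliminate $H_4$. With the circle on the second node from the non-$5$ end (the rectified $600$-cell), deleting the $5$-end leaf gives $A_3$ with the middle node circled, the octahedron; deleting the other leaf gives $H_3$ with the end node circled, the icosahedron. Both facets are regular, so this case survives. In total you have omitted three of the six higher-dimensional diagrams in Figure~\ref{Gosset:fig} (rectified $600$-cell, $3_{21}$, $4_{21}$), which is a genuine gap, not just bookkeeping.
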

\begin{proof}
All the spherical diagrams with 3 nodes were analyzed in Figure \ref{Wythoff:fig}, and after excluding duplicates and regular polyhedra we get 11 types as in Figure \ref{Gosset:fig}.

The remaining 6 diagrams with $>3$ nodes in Figure \ref{Gosset:fig} indeed produce semiregular polyhedra: the one with 5 nodes is a 5-demicube by Exercise \ref{demicube:ex}, and by analyzing their Coxeter -- Wythoff subgraphs we discover that the facets of the other 5 polyhedra are cross-polytopes and simplexes (we use Exercise \ref{move:ex}). Therefore they are also semiregular and not regular. Finally, by examining all the spherical diagrams in Figure \ref{Coxeter-spherical:fig} with $\geq 4$ nodes one checks that only those in Figure \ref{Gosset:fig} have only regular facets and are not themselves regular.
\end{proof}

We now list all the semiregular polyhedra, distinguishing from the dimension $n=3$ where semiregular is equivalent to uniform, and $n\geq 4$ where the semiregular condition is much stronger.

\subsubsection{Dimension 3}
The complete classification of semiregular polyhedra in dimension $n=3$ was apparently known to Archimedes, see Walsh for a proof \cite{W}:

\begin{figure}
 \begin{center}
  \includegraphics[width = 12.5 cm]{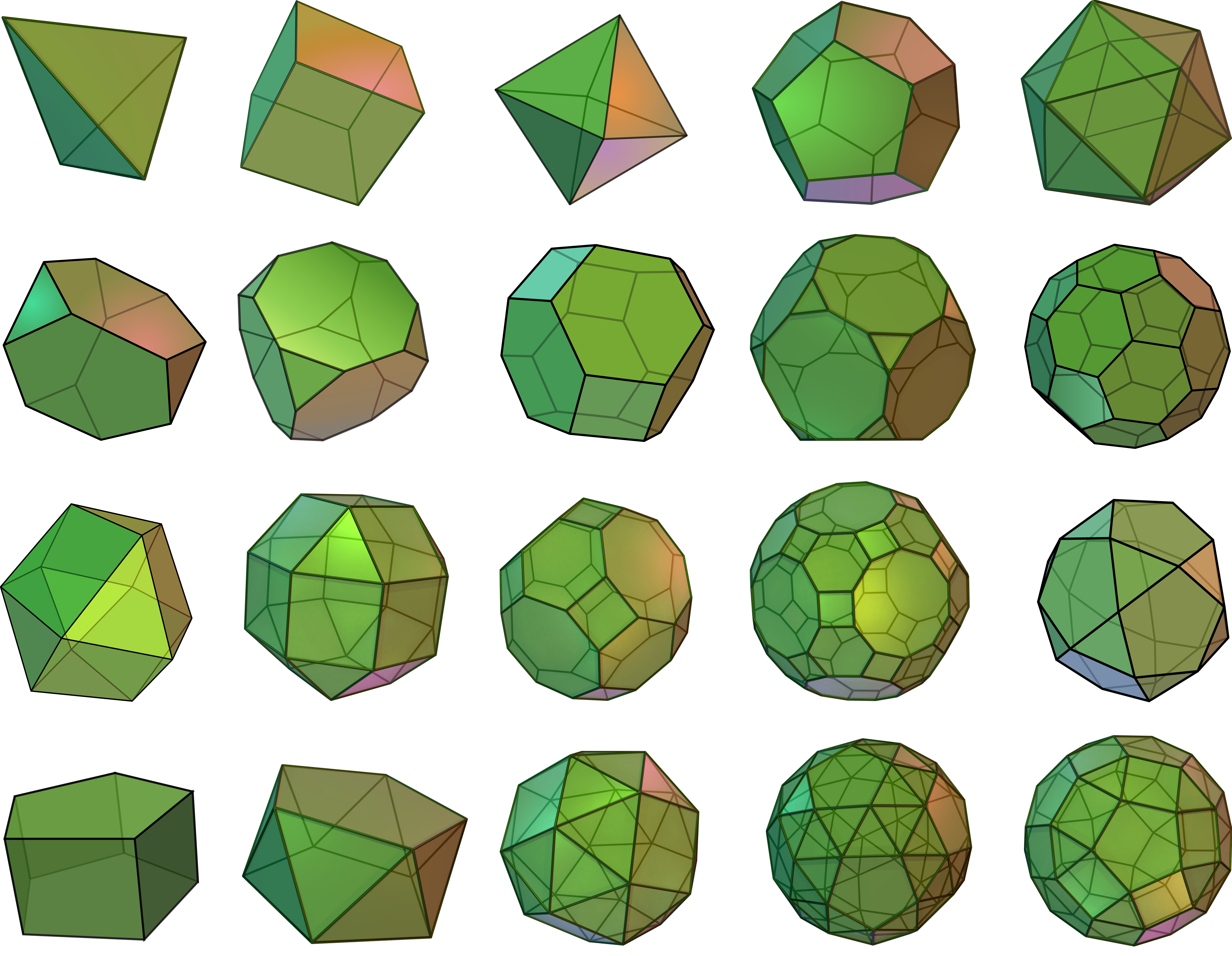}
 \end{center}
 \caption{The semiregular 3-dimensional Euclidean polyhedra. These are the 5 regular solids (first row), the 13 \emph{Archimedean polyhedra} (second and third row, plus the last three of the fourth row), and two infinite families of prisms and antiprisms, each with two $n$-gon bases (the first two of the last row, here drawn with $n=5$). }  \label{archimedean:fig}
\end{figure}

\begin{teo}
The semiregular polyhedra in $\matR^3$ are:
\begin{itemize}
\item The 5 regular polyhedra;
\item The 13 \emph{Archimedean} polyhedra;
\item The two infinite families of prisms and antiprisms.
\end{itemize}
See Figure \ref{archimedean:fig}. 
\end{teo}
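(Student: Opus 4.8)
My plan is to classify semiregular polyhedra in $\matR^3$ through their \emph{vertex figures}. Since the isometry group acts transitively on vertices and every facet is a regular polygon, the cyclic sequence $(p_1,\ldots,p_k)$ of polygons encountered around a vertex is independent of the vertex, and I will use it as the primary combinatorial invariant. The first step is the local angle constraint: a vertex of a convex polyhedron in $\matR^3$ has strictly positive angular defect, so the interior angles of the incident regular polygons sum to less than $2\pi$. As a regular $p$-gon has interior angle $(1-2/p)\pi$, this reads
\[
\sum_{i=1}^{k}\Big(1-\frac{2}{p_i}\Big) < 2,
\qquad\text{equivalently}\qquad
\sum_{i=1}^{k}\frac{1}{p_i} > \frac{k-2}{2}.
\]
With $p_i\geq 3$ and $k\geq 3$ this immediately forces $k\leq 5$ and leaves a short finite list of candidate types, together with the two one-parameter families $(4,4,n)$ and $(3,3,3,n)$ that will eventually become the prisms and antiprisms.

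The second step is to prune this list using global closure conditions that a cyclic type must satisfy to assemble into an actual polyhedron. The key elementary observation is a parity lemma: if an odd-gon type occurs \emph{exactly once} in the vertex figure, then as one traverses that face its neighbouring faces alternate, so the two types flanking it must coincide. This eliminates spurious candidates such as $(3,4,6)$, where the (unique, odd) triangle would be flanked by a square and a hexagon. Crucially, the lemma does \emph{not} apply when a face-type is repeated, which is exactly why the chiral snub types $(3,3,3,3,4)$ and $(3,3,3,3,5)$ survive. The remaining spurious families, like $(3,3,n)$, are then excluded by Euler's formula together with the counting identities for face–vertex incidences, which show that no admissible integer solution yields a genuine convex polyhedron except the tetrahedron. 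After this pruning exactly the vertex types of the five regular solids, the thirteen Archimedean solids, and the prism/antiprism families remain.

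The third step is realization. Most of the survivors are \emph{Wythoffian}: they arise from the three irreducible spherical Coxeter triangles of tetrahedral, octahedral, and icosahedral type via the Coxeter--Wythoff construction, and their complete face structure is read off from the Coxeter--Wythoff subdiagrams exactly as in Section~\ref{Wythoff:subsection}; this reproduces the five regular solids and the eleven non-snub Archimedean solids, while the prisms come from the reducible symmetry $I_2(n)\times A_1$. The two genuinely exceptional solids, the snub cube and the snub dodecahedron, are chiral and are \emph{not} produced by any Coxeter--Wythoff diagram; I would construct them by the snub (alternation) operation applied to the omnitruncated octahedron and icosahedron, and the antiprisms $(3,3,3,n)$ analogously by alternating the $2n$-prism.

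The main obstacle is uniqueness, and specifically the fact that a vertex figure does \emph{not} by itself pin down the polyhedron: the pseudo-rhombicuboctahedron shares the local type $(3,4,4,4)$ with the rhombicuboctahedron yet is \emph{not} vertex-transitive under isometries. The definition of semiregular adopted here, namely transitivity by \emph{isometries} rather than mere combinatorial local uniformity, is precisely what excludes such impostors, so the delicate point is to show, for each admissible type, that the regular faces can be assembled in one and only one globally isometry-transitive way. In the Wythoffian cases the Coxeter-group symmetry certifies vertex-transitivity automatically, but for the two snubs one needs a direct rigidity argument to see both existence and the absence of competing assemblies; this is the part carried out in full by Walsh~\cite{W}, whose argument I would follow.
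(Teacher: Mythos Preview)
The paper does not actually prove this theorem: it simply cites Walsh~\cite{W} for a proof and moves on. Your sketch is a faithful outline of the classical vertex-figure argument (essentially the one in Walsh), and since you yourself defer to~\cite{W} for the delicate uniqueness step, your proposal and the paper's treatment are in complete agreement.

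One terminological quibble worth noting against the paper's conventions: in Section~\ref{Wythoff:subsection} the Wythoff construction is defined only for \emph{irreducible} Coxeter simplexes, so the prisms, coming from the reducible group $I_2(n)\times A_1$, are not Wythoffian in the paper's sense (nor are the antiprisms). This does not affect your argument---you only use the construction to \emph{realize} those types, and a direct construction of prisms and antiprisms is trivial---but it is worth phrasing consistently with the surrounding text.
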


Among the 13 Archimedean polyhedra, 11 are Wythoffian and arise from the 11 diagrams in the first three lines of Figure \ref{Gosset:fig} as shown in Figure \ref{Wythoff:fig}, and two are not Wythoffian (the third and fourth in the bottom row of Figure \ref{archimedean:fig}).

\subsubsection{Dimension $\geq 4$}
The list of all the semiregular polyhedra that are not regular in dimension $n\geq 4$ is quite short. It was discovered by Gosset \cite{G} in 1899, and proved to be complete by Blind -- Blind \cite{BB} almost a century later in 1991.

\begin{table} 
\begin{center}
\begin{tabular}{c||cccc}
\phantom{\Big|} dim & polyhedron & facets & vertices & link\\
\hline\hline
\rule{0pt}{3ex}
\phantom{\Big|} $4$ & rectified 4-simplex & 5 octa, 5 tetra & 10 & 3-prism \\
\phantom{\Big|} $4$ & rectified 600-cell & 600 octa, 120 icosa & 720 & 5-prism \\
\phantom{\Big|} $4$ & snub 24-cell & 120 tetra, 24 icosa & 96 & trid icos\\
\hline
\phantom{\Big|} $5$ & 5-demicube & 10 cross, 16 simpl & 16 & rect 4-simpl\\
\hline
\phantom{\Big|} $6$ & $2_{21}$ & 27 cross, 72 simpl & 27 & 5-demicube\\
\hline
\phantom{\Big|} $7$ & $3_{21}$ & 126 cross, 576 simpl & 56 & $2_{21}$ \\
\hline
\phantom{\Big|} $8$ & $4_{21}$ & 2160 cross, 17280 simpl & 240 & $3_{21}$
\end{tabular}
\end{center}
\vspace{.3 cm}
\caption{The semiregular (not regular) polyhedra of dimension $n\geq 4$. The last column shows the link of the vertices. That of the snub 24-cell is a polyhedron called \emph{tridiminished icosahedron}.}
\label{semiregular:table}
\end{table}

\begin{teo}
There are 7 semiregular and not regular polyhedra in $\matR^n$ with $n\geq 4$, listed in Table \ref{semiregular:table}. Only the snub 24-cell is not Wythoffian.
\end{teo}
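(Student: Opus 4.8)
The plan is to split the statement into an \emph{existence} part---the seven polyhedra of Table \ref{semiregular:table} are semiregular, not regular, and all but the snub 24-cell are Wythoffian---and a \emph{completeness} part, that there are no others. The existence part is concrete and rests entirely on machinery already in place, whereas completeness is the difficult classification of Gosset \cite{G} completed by Blind--Blind \cite{BB}, which is where the real work lies.

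For existence, six of the seven are essentially free. By the preceding proposition the Wythoffian semiregular non-regular polyhedra are exactly those of Figure \ref{Gosset:fig}, and the six whose Coxeter--Wythoff diagram has more than three nodes are precisely the ones living in $\matR^n$ with $n\ge 4$: the rectified 4-simplex and the rectified 600-cell ($n=4$), the 5-demicube ($n=5$, Exercise \ref{demicube:ex}), and $2_{21},3_{21},4_{21}$ ($n=6,7,8$). For each I would read off the facets, the vertex count, and the vertex link directly from its Coxeter--Wythoff subdiagrams, using the proposition that subdiagrams describe faces (and Exercise \ref{move:ex} to identify the simplex-and-cross-polytope facets of the $k_{21}$ family): the facet orbits correspond to the maximal proper Coxeter--Wythoff subdiagrams, each of which is linear and hence yields a \emph{regular} facet (a simplex, a cross-polytope, or an icosahedron), which is exactly the data recorded in Table \ref{semiregular:table}. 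Non-regularity is immediate, since each of these diagrams admits more than one chain of subdiagrams, hence more than one flag orbit.

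The seventh polyhedron, the snub 24-cell, is not Wythoffian and must be built by hand. Using the binary subgroups $T_{24}^*<I_{120}^*$ of the unit quaternions introduced in the regular four-dimensional case, I would define it as $\Conv(I_{120}^*\setminus T_{24}^*)$, the convex hull of the $120-24=96$ vertices of the 600-cell that do not lie on one inscribed 24-cell. A direct check (equivalently, the standard description of the alternated 24-cell) shows that this hull has $24$ regular icosahedral and $120$ regular tetrahedral facets, so all facets are regular; vertex-transitivity follows from the order-$576$ rotation group of the 24-cell, which preserves both $I_{120}^*$ and $T_{24}^*$, hence their difference, and acts transitively on the $96$ points. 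Since this group contains no reflections, the snub 24-cell is chiral and carries no reflective symmetry, so it cannot arise from a Wythoff construction; this yields the final clause of the statement.

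Completeness is the genuine obstacle, and is the content of \cite{BB}. The natural framework is to study, by induction on $n$, convex $n$-polytopes all of whose facets are regular: if $P$ is semiregular then the link of each vertex is such a polytope of dimension $n-1$---its facets being vertex figures of regular polytopes, hence regular---and vertex-transitivity of $P$ forces all its vertex links to be congruent. Since there are only finitely many regular polytopes in each dimension (Corollary \ref{regular:cor}) and the ways in which regular facets can surround a ridge are tightly constrained by convexity, one expects the list to be finite. The difficulty, and precisely what makes \cite{BB} long and delicate, is twofold: one must carry out an exhaustive analysis of the admissible gluings of regular cells in each dimension and show that the family terminates at $n=8$; and one must separately handle symmetry groups that are \emph{not} reflection groups, since the snub 24-cell shows that chiral examples do occur and cannot be reduced to the Coxeter-diagram and Wythoff correspondence that governs the reflective case. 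For this last part I would follow \cite{BB}.
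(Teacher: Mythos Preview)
Your proposal is essentially correct and follows the same line as the paper, which in fact does not give a proof at all: the paper merely states the theorem, attributes the list to Gosset and its completeness to Blind--Blind \cite{BB}, and then remarks that the six Wythoffian entries come from the last two rows of Figure~\ref{Gosset:fig} while the snub 24-cell has vertex set $I_{120}^*\setminus T_{24}^*$. Your write-up is more detailed than the paper on every point---the reading of facets and links from subdiagrams, the chirality argument for non-Wythoffianity, and the sketch of the inductive strategy behind \cite{BB}---but the decomposition into existence (via the preceding proposition plus an ad hoc construction of the snub 24-cell) and completeness (deferred to \cite{BB}) is exactly the paper's.

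One small caution on your snub 24-cell paragraph: the claim that the rotation group of the 24-cell preserves \emph{both} $T_{24}^*$ and $I_{120}^*$ deserves a word of justification, since an arbitrary symmetry of an inscribed 24-cell need not preserve the ambient 600-cell. The cleanest route is via quaternions: left and right multiplication by elements of $T_{24}^*$ give rotations of $\matR^4$ preserving $I_{120}^*$ (because $T_{24}^*<I_{120}^*$ is a subgroup) and $T_{24}^*$ (because $T_{24}^*$ is a group), hence also their difference; these generate a group of order $576$ acting transitively on the $96$ points. Your chirality argument (no reflections $\Rightarrow$ not Wythoffian) is fine once this is in place.
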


The 6 Wythoffian polyhedra in Table \ref{semiregular:table} correspond to the 6 diagrams in the last two lines of Figure \ref{Gosset:fig}, see also Exercise \ref{linear3:ex}. The \emph{snub 24-cell} is the convex hull of the $120-24 = 96$ points in $I_{120}^* \setminus T_{24}^*$. Its 96 vertices are the vertices of the 600-cell minus those of the 24-cell. 

\subsubsection{Gosset polyhedra}
The \emph{Gosset polyhedra} $2_{21}, 3_{21}, 4_{21}$ are the semiregular polyhedra in Table  \ref{semiregular:table} constructed from the last three diagrams in Figure \ref{Gosset:fig}. We now describe them explicitly, 
starting from the remarkable 8-dimensional $4_{21}$. 

We start by equipping $\matZ^8$ with the famous even unimodular positive-definite bilinear form determined by the matrix
$$
E_8=
{\footnotesize
\begin{pmatrix}
 2 & -1 &  0 &  0 &  0 &  0 &  0 &  0 \\
-1 &  2 & -1 &  0 &  0 &  0 &  0 &  0 \\
 0 & -1 &  2 & -1 &  0 &  0 &  0 &  0 \\
 0 &  0 & -1 &  2 & -1 &  0 &  0 &  0 \\
 0 &  0 &  0 & -1 &  2 & -1 &  0 & -1 \\
 0 &  0 &  0 &  0 & -1 &  2 & -1 &  0 \\
 0 &  0 &  0 &  0 &  0 & -1 &  2 &  0 \\
 0 &  0 &  0 &  0 & -1 &  0 &  0 &  2
\end{pmatrix}.
}
$$
Even unimodular positive definite bilinear forms on $\matZ^n$ exist only when $n$ is divisible by 8, and in dimension 8 this is the only one up to isomorphism \cite{MH}. 

It is convenient to embed isometrically $(\matZ^8, E_8)$ in $\matR^8$ equipped with its standard scalar product. 
By linear algebra there is a basis $v_1,\ldots, v_8$ of $\matR^8$ whose Gram matrix is $E_8$, for instance we may take the vectors
$$e_1-e_2, \quad e_2-e_3,\quad e_3-e_4,\quad e_4-e_5, \quad e_5-e_6, \quad e_6+e_7, \quad -\frac 12 \sum_{i=1}^8 e_i, \quad e_6-e_7.
$$
The \emph{$E_8$ lattice} is the lattice $\Lambda < \matR^8$ generated by these vectors $v_1,\ldots, v_8$. It consists of the elements $(x_1,\ldots,x_8)\in \matZ^8 \cup (\matZ + \tfrac 12)^8$ with even coordinate sum. The smallest non-zero elements in $\Lambda$ have norm $\sqrt 2$ and they are 240 in number. 

\begin{prop}
The polyhedron $4_{21}$ is the convex hull of these 240 vectors.
\end{prop}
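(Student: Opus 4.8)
The plan is to use the description of $4_{21}$ already established in this section: by definition it is produced by the Wythoff construction applied to the compact spherical Coxeter simplex $P\subset \matS^7$ whose diagram is the $E_8$ diagram with the terminal node of the longest (length-$4$) arm encircled. Since $\matX^7=\matS^7$, the resulting uniform polyhedron is, up to similarity in $\matR^8$, the convex hull of the $\Gamma$-orbit of the seed $p$, where $\Gamma<\Iso(\matS^7)$ is the Coxeter group generated by the reflections in the eight facets of $P$, that is $\Gamma=W(E_8)$ acting on $\matR^8$ by its reflection representation. This representation preserves the lattice $\Lambda$ and hence its set of $240$ minimal vectors, so it suffices to prove that the $\Gamma$-orbit of $p$ is exactly this set of minimal vectors, after a global rescaling that is harmless since we work up to similarity.

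First I would pin down the seed. With a single encircled node, the seven unencircled nodes form the subdiagram obtained by deleting the chosen terminal node; this is the $E_7$ diagram, and the seven corresponding reflections generate a subgroup $\Gamma'\cong W(E_7)<\Gamma$. By construction $p$ is the vertex of $P$ opposite the encircled facet, i.e. the point lying on all seven of the unencircled facet-hyperplanes and therefore fixed by all seven of these reflections. Since the reflection representation of $W(E_7)$ on the $7$-dimensional span $V$ of the $E_7$ roots is irreducible, the fixed subspace of $\Gamma'$ in $\matR^8$ is the line $V^\perp$, so $p$ spans $V^\perp$. This already forces the orbit to have size $|W(E_8)|/|W(E_7)| = 696729600/2903040 = 240$, matching the vertex count of $4_{21}$ in Table \ref{semiregular:table}.

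The heart of the argument is to identify the line $V^\perp$ with a root. The $240$ minimal vectors of $\Lambda$ are precisely the roots of $E_8$: they all have norm $\sqrt2$, and since the system is simply-laced, $W(E_8)$ permutes them transitively, so they form a single $\Gamma$-orbit. It therefore remains to check that $V^\perp$ contains a minimal vector, equivalently that some root is orthogonal to the whole $E_7$ subsystem. This is the one genuinely $E_8$-specific input: in the extended diagram $\tilde E_8$ the affine node attaches exactly to our encircled node, i.e. $\langle\theta,\alpha_j\rangle=0$ for every unencircled simple root $\alpha_j$, which says precisely that the highest root $\theta$ is the fundamental weight dual to the encircled node. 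Hence $\theta\in V^\perp$, so $p$ is proportional to the root $\theta$. Consequently the $\Gamma$-orbit of $p$ is the $W(E_8)$-orbit of $\theta$, namely all $240$ roots, and taking convex hulls gives the claim.

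The main obstacle is exactly this last identification of $p$ with a multiple of $\theta$. One clean way to verify it without quoting affine Dynkin data is to compute the fundamental weight at the encircled node directly in the coordinates fixed above, using that the Cartan matrix $E_8$ is unimodular so that its inverse is integral and each fundamental weight already lies in $\Lambda$, and then to check that this particular weight has norm $\sqrt2$, i.e. is a minimal vector. The remaining bookkeeping, namely irreducibility of the $W(E_7)$-action, transitivity of $W(E_8)$ on its roots, and the recognition of the deleted subdiagram as $E_7$, is routine from the diagram classification of Theorem \ref{irreducible:Coxeter:teo}.
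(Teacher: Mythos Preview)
Your proof is correct and takes a genuinely different, more structural route than the paper's. The paper works in explicit coordinates: using the concrete basis $v_1,\ldots,v_8$ it identifies the seed directly as $\tfrac{\sqrt 2}{2}(e_8-e_1)$, notes that $e_8-e_1\in\Lambda$ has minimal norm, checks via the reflection formula $x\mapsto x-\langle x,v_i\rangle v_i$ that $\Gamma$ preserves $\Lambda$ (hence the $240$ minimal vectors), and then leaves the equality of the orbit with the full set of $240$ as something ``one can verify''. You instead characterize the seed abstractly as spanning the fixed line of $W(E_7)$, identify this line with $\matR\theta$ via the affine attachment in $\tilde E_8$ (equivalently, by checking that the fundamental weight at the encircled node has norm $\sqrt 2$), and then invoke transitivity of $W(E_8)$ on roots in a simply-laced system. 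Your approach avoids coordinates and delivers the full orbit in one stroke, while the paper's is more elementary (no highest roots, no fundamental weights) but defers the transitivity to the reader. One small remark: the count $|W(E_8)|/|W(E_7)|=240$ requires that the stabilizer of the seed in $\Gamma$ equals the parabolic $W(E_7)$, not merely contains it; this is standard for a vertex of the fundamental simplex, but you might say so explicitly, or simply omit the count since transitivity on roots already gives you everything.
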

\begin{proof}
The Gram matrix of the Coxeter polyhedron $P$ defined by the last diagram in Figure \ref{Gosset:fig} is $\tfrac 12E_8$. Therefore $P = \{\langle x,v_i \rangle \leq 0\} \cap \matS^7$. The seed is the vertex $v=\frac{\sqrt 2}2(e_8-e_1)$ of $P$ opposite to the facet that corresponds to $v_1$. The polyhedron $4_{21}$ is the convex hull of the translates of $v$ under the reflection group $\Gamma$ of $P$. We pick the renormalized $v'= e_8-e_1$ instead of $v$, that is an element in $\Lambda$ with smallest norm. The group $\Gamma$ preserves $\Lambda$ since a reflection along the $i$-th face is written as
$$x \longmapsto x-2\frac{\langle x,v_i \rangle}{\langle v_i, v_i \rangle}v_i = x- \langle x,v_i \rangle v_i$$
and $\langle x,v_i \rangle \in \matZ$. Therefore the orbit of $v'$ is contained in the set of 240 elements with smallest norm, and one can verify that it consists of that set.
\end{proof}

Having a concrete representation for $4_{21}$, we deduce one for $3_{21}$ and $2_{21}$ as the links of the vertices of $4_{21}$ and $3_{21}$, iteratively. 

\subsection{Semiregular tessellations} We now turn to semiregular tessellations.

\begin{figure}
 \begin{center}
  \includegraphics[width = 11 cm]{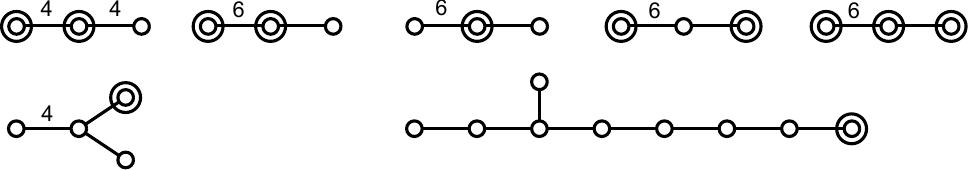}
 \end{center}
 \caption{Coxeter -- Wythoff diagrams that produce semiregular tessellations in $\matR^n$ that are not regular.}  \label{semireg_tess_Rn:fig}
\end{figure}

\begin{figure}
 \begin{center}
  \includegraphics[width = 11 cm]{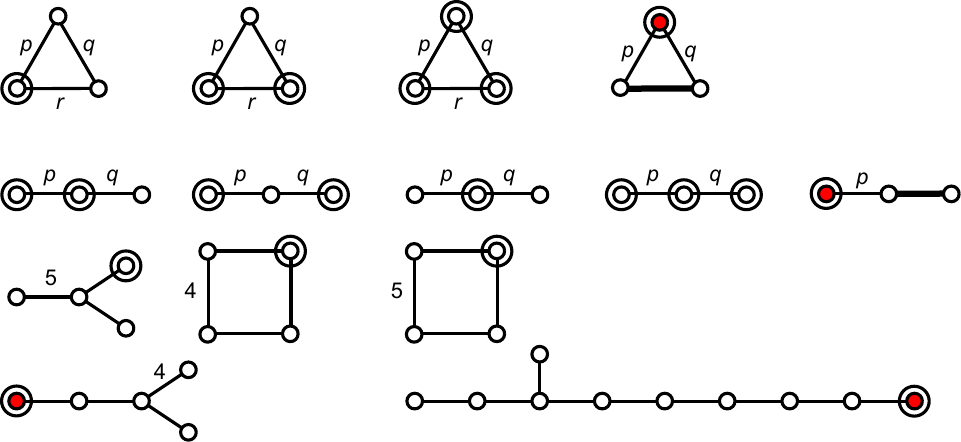}
 \end{center}
 \caption{Coxeter -- Wythoff diagrams that produce semiregular tessellations in $\matH^n$ that are not regular. Some planar tessellations can be reproduced via different diagrams.
 }  \label{semireg_tess_Hn:fig}
\end{figure}

\subsubsection{Wythoffian} As with polyhedra, we first classify the Wythoffian tessellations.

\begin{prop}
The Wythoffian semiregular tessellations in $\matR^n, \matH^n$ that are not regular are those produced by the Coxeter -- Wythoff diagrams in Figures \ref{semireg_tess_Rn:fig} and \ref{semireg_tess_Hn:fig}.
\end{prop}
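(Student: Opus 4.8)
The plan is to run, for tessellations, exactly the argument that settled the semiregular polyhedra, with the spherical Coxeter simplexes replaced by the Euclidean and hyperbolic ones of Figures \ref{Coxeter-Euclidean:fig}, \ref{Coxeter-hyperbolic:fig}, \ref{Coxeter-hyperbolic-ideal:fig} and \ref{Coxeter-hyperbolic-ideal2:fig}. Any tessellation $T$ obtained by the Wythoff construction from a Coxeter -- Wythoff diagram $D$ is uniform, hence vertex-transitive, so $T$ is semiregular precisely when all of its facets are regular; and $T$ is flag-transitive, that is regular, only for the linear diagrams with one encircled endpoint of Theorem \ref{linear:Coxeter:teo}. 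The whole statement therefore reduces to a condition on $D$, to be checked against a list of diagram shapes that is finite up to the parametrised hyperbolic families.

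The first step is to make the facet condition combinatorial. For a tessellation of $\matX^n$ the diagram $D$ has $n+1$ nodes, and by the correspondence between the faces of $T$ and the Coxeter -- Wythoff subdiagrams of $D$, the facets of $T$ are indexed by the admissible single-node deletions $D\setminus v$, meaning those for which every connected component still carries an encircled node; the facet attached to $D\setminus v$ is the product of the Euclidean polyhedra produced by its components. By Corollary \ref{regular:cor} this product is a regular polyhedron exactly when either $D\setminus v$ is connected and linear with one encircled endpoint, giving a simplex, a cube, a cross-polytope, or their ideal analogues, or $D\setminus v$ is a disjoint union of encircled nodes, whose product of segments is again a cube. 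Every other $D\setminus v$ yields a nontrivial product, a prism or the like, which is uniform but not regular and so disqualifies $D$. This is the working criterion: all admissible $D\setminus v$ must be of one of these two regular types, and $T$ must not itself be regular.

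With the criterion in hand the proof splits into the two usual halves. First I would verify that each diagram in Figures \ref{semireg_tess_Rn:fig} and \ref{semireg_tess_Hn:fig} passes the test, reading its facets off from the admissible node deletions just as Exercises \ref{demicube:ex} and \ref{move:ex} were used in the polyhedral case, and confirming both that every facet is a simplex, cube, cross-polytope, or regular ideal polyhedron, and that $T$ is not flag-transitive. Conversely I would decorate every diagram in the four Euclidean and hyperbolic simplex figures with each admissible pattern of circles --- in the noncompact case a single circle, placed opposite the ideal vertex --- and retain only those meeting the criterion; the survivors should match the two figures exactly.

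The hard part will be the hyperbolic enumeration. The spherical list was finite, whereas the hyperbolic Coxeter simplexes include the infinite planar families and several parametrised low-dimensional families, so the facet criterion must be solved as a constraint on the integer labels rather than read off a finite list, and the ideal facets must be tested for regularity against Corollary \ref{regular:cor} instead of merely recognised by shape. One must also honour the rule that a noncompact Coxeter -- Wythoff diagram carries a single encircled node, which forces every admissible deletion to leave the diagram connected with the circle at an endpoint, sharply pruning the cases but demanding care for completeness. The genuinely delicate points are guarding against a facet that is secretly a non-regular product and against a diagram that looks non-regular yet produces a flag-transitive $T$; the remainder is bookkeeping parallel to the polyhedral case already treated.
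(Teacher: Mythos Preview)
Your approach is essentially the paper's own: reduce semiregularity to a combinatorial condition on the Coxeter -- Wythoff diagram (every admissible single-node deletion must describe a regular facet, while $D$ itself is not of the linear single-endpoint type), and then run the condition against the finite list of Euclidean and hyperbolic simplex diagrams. The paper states this in one sentence; you spell out the mechanism, which is fine.

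There is one small gap in your facet criterion. You say that a disconnected $D\setminus v$ gives a regular polyhedron only when it is a disjoint union of encircled isolated nodes, the product of segments being a cube. This is too restrictive: a product of Euclidean polyhedra is regular precisely when it is a cube, and that happens whenever \emph{each} component represents a cube, not only when each component is a single node. For example, a component that is a linear diagram with a label $4$ at the encircled end already gives a square or higher cube, and its product with further segments or cubes is again a cube. Your sentence ``every other $D\setminus v$ yields a nontrivial product \ldots\ which is uniform but not regular'' is therefore not literally correct. This matters in the enumeration, since diagrams of type $\tilde C_n$ and $\tilde B_n$ can produce exactly such deletions; you must allow them and then check separately whether the resulting tessellation is already regular (it often is, via a different diagram) or genuinely new. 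Once you amend the criterion to ``each component of $D\setminus v$ is linear with one encircled endpoint, and if there is more than one component then each represents a cube'', the rest of your plan goes through and matches the paper's argument. You should also keep track, as the paper remarks, of distinct diagrams producing the \emph{same} tessellation, so that each tessellation is listed once.
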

\begin{proof}
By examining all the Euclidean and hyperbolic diagrams one checks that only those in the figure have only regular facets and are not themselves regular. Different diagrams that give rise to the same tessellation in dimension $n\geq 3$ have been cited only once. 
\end{proof}

We now distinguish between dimension $n=2$ where semiregular is equivalent to uniform, and $n\geq 3$ where the semiregular condition is more restrictive.

\subsubsection{Dimension 2}
The semiregular tessellations of $\matR^2$ are probably known since long. A proof of the following is in Gr\"unbaum -- Shephard \cite[Section 2.1]{Gru}.

\begin{figure} 
 \begin{center}
  \includegraphics[width = 2 cm]{T1}
  \includegraphics[width = 2 cm]{T2}
  \includegraphics[width = 2 cm]{T3} \\
  \vspace{.1 cm}
  \includegraphics[width = 2 cm]{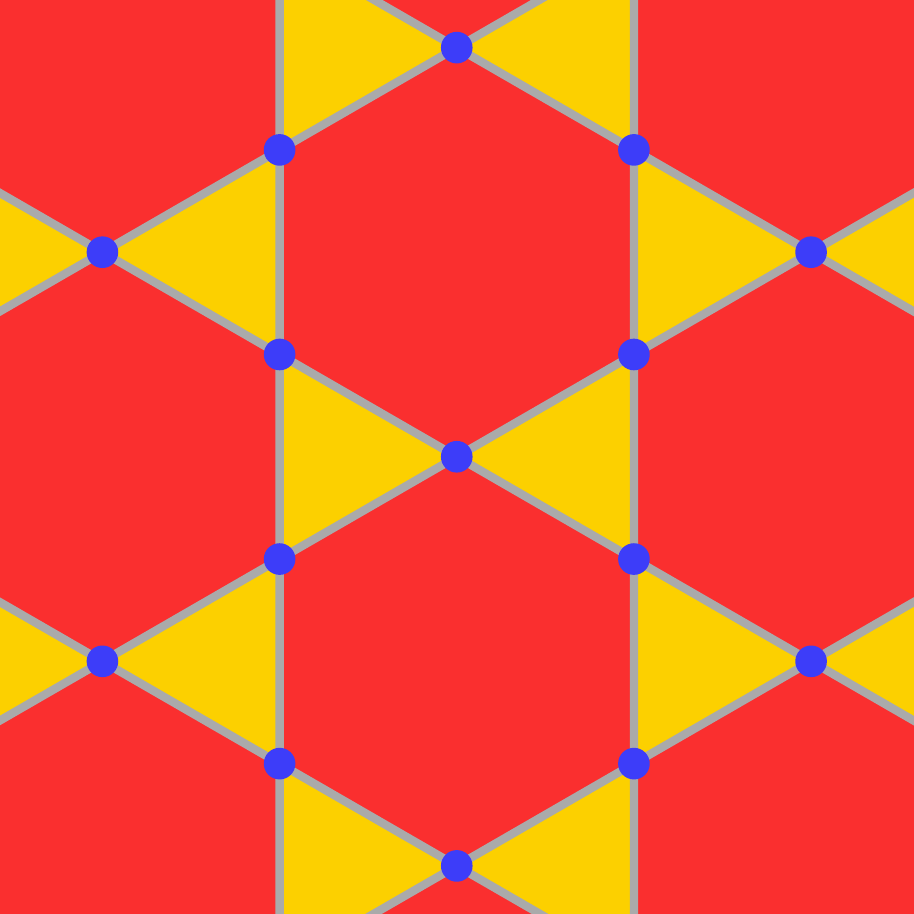}
  \includegraphics[width = 2 cm]{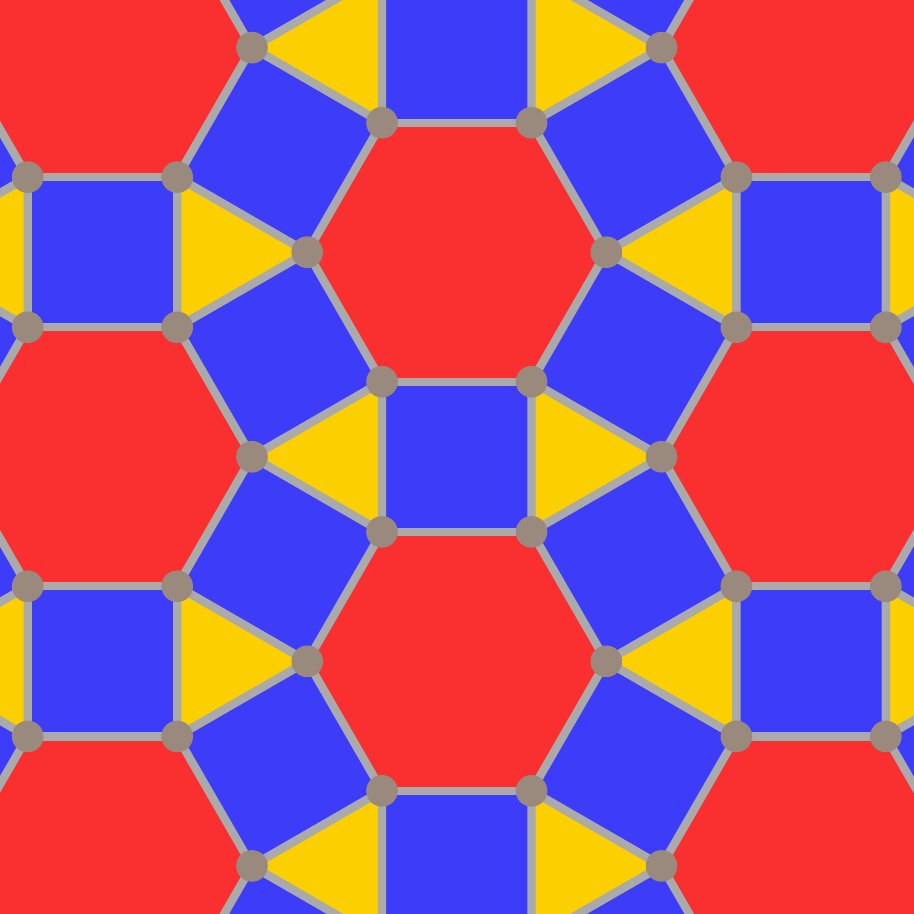}
  \includegraphics[width = 2 cm]{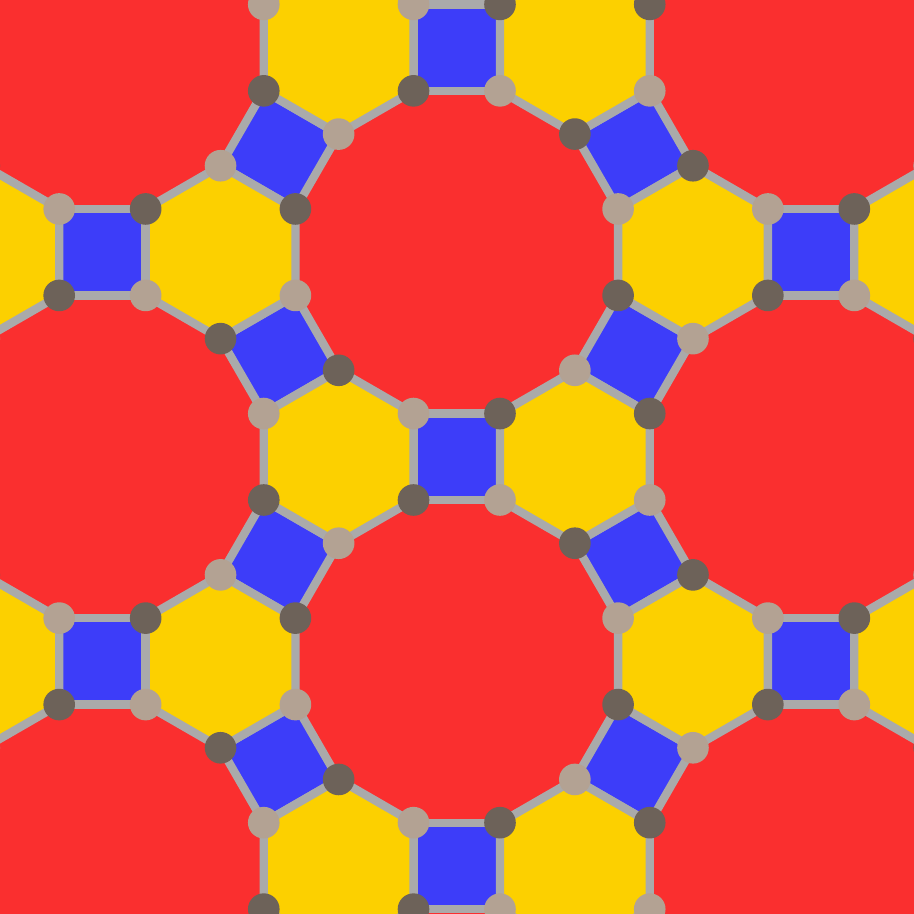} 
  \includegraphics[width = 2 cm]{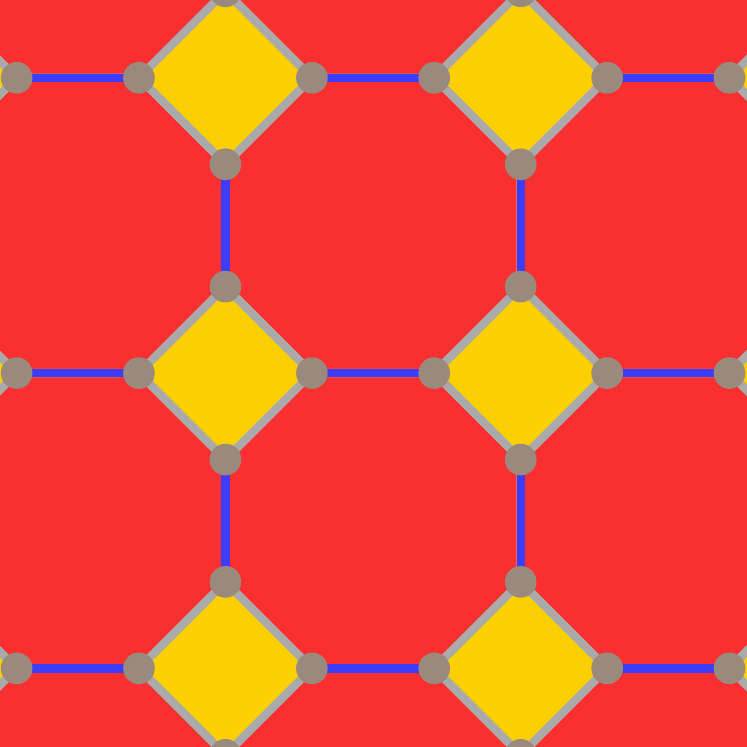} \\
  \vspace{.1 cm}
  \includegraphics[width = 2 cm]{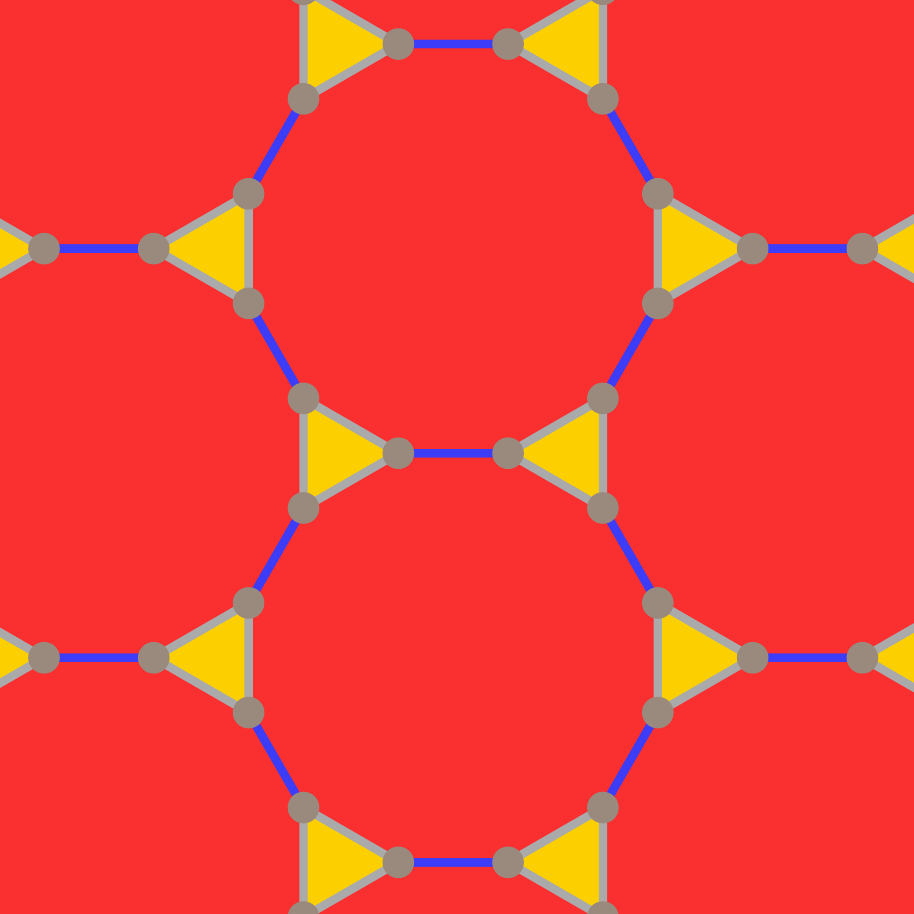}
  \includegraphics[width = 2 cm]{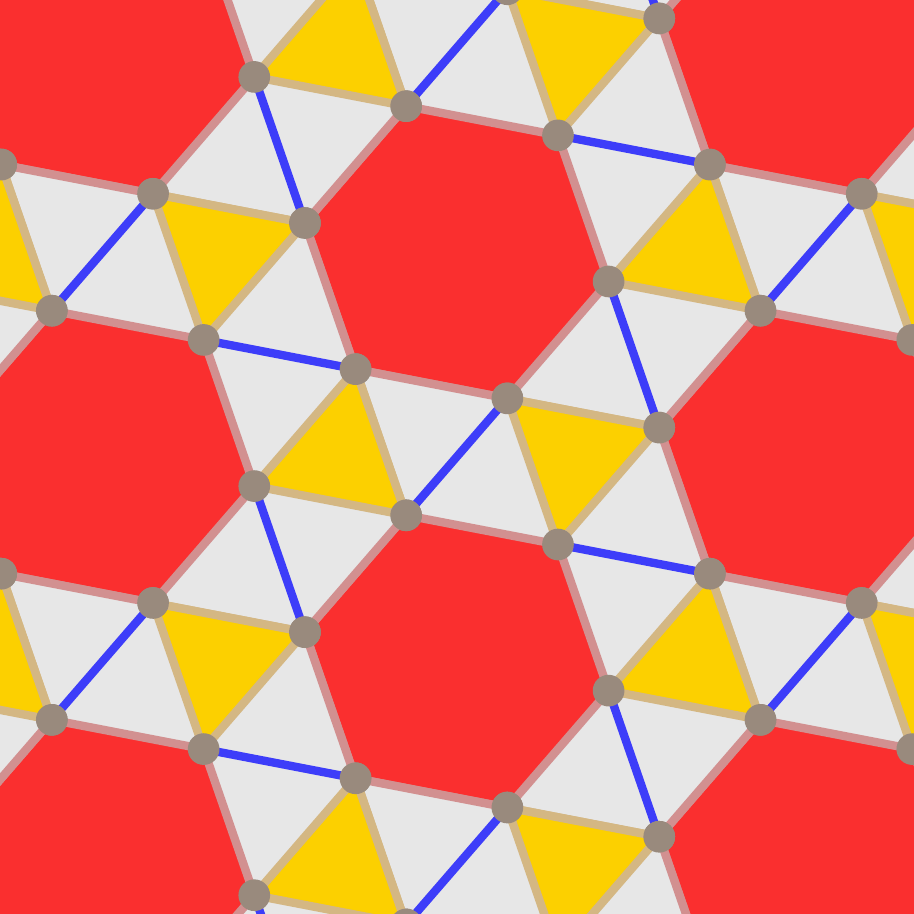}
  \includegraphics[width = 2 cm]{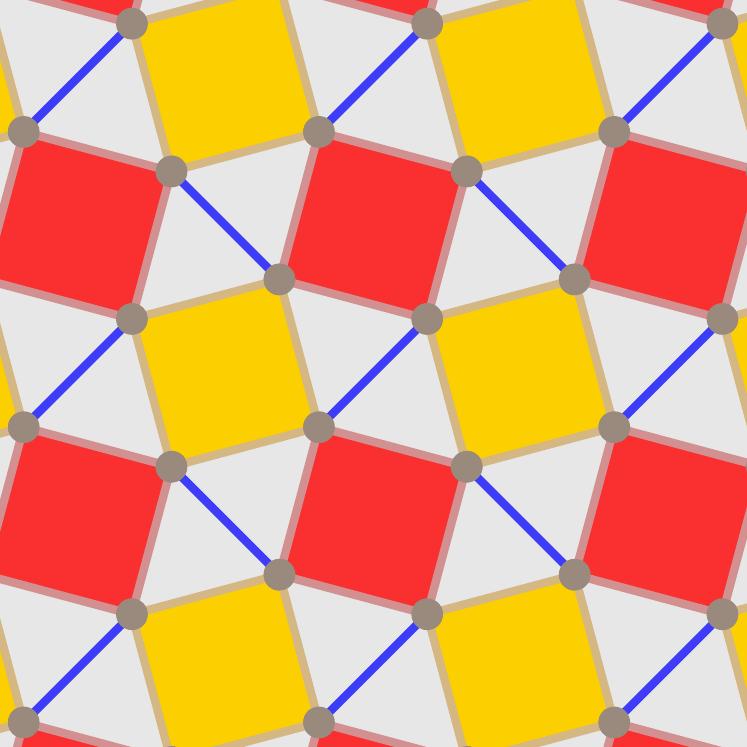}
  \includegraphics[width = 2 cm]{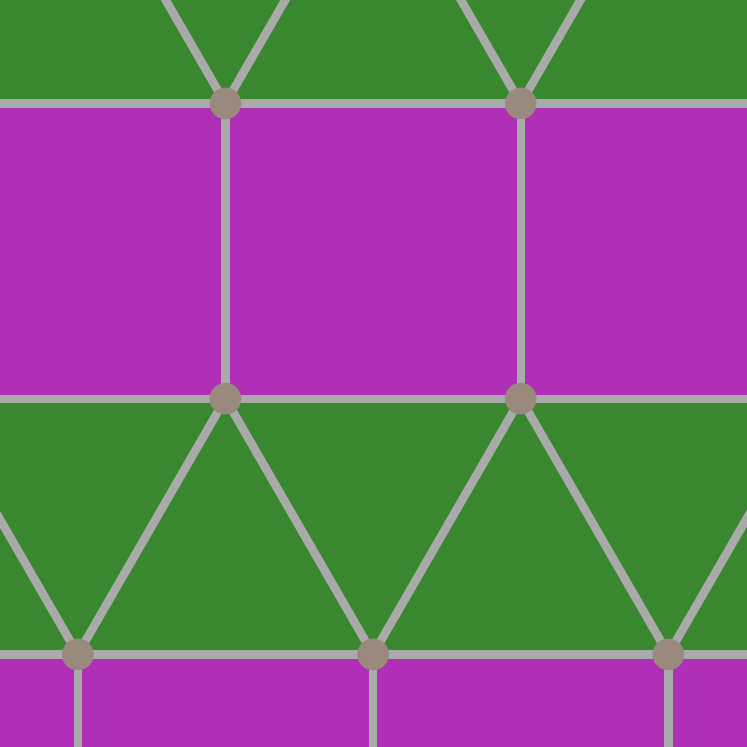}
 \end{center}
 \caption{The 11 semiregular tessellations of the Euclidean plane. The first three are regular.}  \label{T:fig}
\end{figure}

\begin{teo} There are 11 semiregular tessellations of $\matR^2$, shown in Figure \ref{T:fig}.
\end{teo}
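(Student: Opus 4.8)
The plan is to reduce the statement to a finite combinatorial problem about \emph{vertex types} and then to decide, type by type, which ones occur. A semiregular tessellation is vertex-transitive by definition, so all of its vertices carry the same cyclic sequence $p_1.p_2.\cdots.p_r$ of incident regular polygons. Since the interior angle of a regular $p$-gon equals $\pi(1-2/p)$ and the angles around a vertex must sum to $2\pi$, the type has to satisfy
\[
\sum_{i=1}^{r}\frac{1}{p_i}=\frac{r-2}{2},\qquad p_i\geq 3 .
\]
As each $1/p_i\leq 1/3$ this forces $r\in\{3,4,5,6\}$, and a finite Diophantine search yields seventeen multisets $\{p_1,\dots,p_r\}$ (ten with $r=3$, four with $r=4$, two with $r=5$, one with $r=6$). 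Listing the distinct cyclic orderings of each multiset gives twenty-one candidate vertex types.

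The heart of the proof, and its main obstacle, is to discard the candidates that satisfy the angle equation but cannot be completed to a tessellation. The principal tool is an elementary parity lemma: if an odd-gon $T$ sits in a slot of the type whose two neighbouring polygons always form the same unordered pair $\{a,b\}$ with $a\neq b$, then the tiles around $\partial T$ must alternate $a,b,a,b,\dots$, which is a proper $2$-colouring of an odd cycle and hence impossible. This single observation eliminates nine candidates in which a triangle or a pentagon is trapped between two distinct polygons, among them $3.7.42$, $4.5.20$, $5.5.10$, $3.3.6.6$, $3.4.4.6$ and $3.4.3.12$. The only stubborn case is $3.3.4.12$: here I would trace the three neighbours of a single triangle and show that the configuration forces, at the triangle's third corner, a triangle flanked by two triangles, a slot absent from $3.3.4.12$; this excludes it as well, leaving exactly the eleven types of Figure \ref{T:fig}.

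It then remains to exhibit, for each surviving type, a tessellation realizing it. Eight of the eleven are already available: the regular $3^6,4^4,6^3$ of Corollary \ref{regular:cor}, together with $3.6.3.6$, $3.4.6.4$, $4.6.12$, $4.8.8$ and $3.12.12$, which are the planar Coxeter--Wythoff tessellations of Figure \ref{semireg_tess_Rn:fig}. The three non-Wythoffian survivors I would build directly: the elongated triangular tiling $3.3.3.4.4$ by alternating strips of squares and of triangles, and the two snub tilings $3.3.4.3.4$ and $3.3.3.3.6$ by the snub (alternation) operation applied to the square and to the hexagonal tessellations.

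Finally I would check that each admissible type determines its tessellation uniquely up to isometry, so that distinct semiregular tessellations have distinct types. For a fixed type the corona of a vertex is pinned down up to finitely many choices, and imposing vertex-transitivity rigidifies these choices and propagates them to the whole plane, yielding a single tessellation up to isometry (the two chiral forms of each snub tiling being mirror images, hence isometric). Each realizable type thus contributes exactly one semiregular tessellation, and together with the eleven constructions above this gives the count $11$. The delicate part throughout is the elimination of the second paragraph: the angle equation is far from sufficient, and the non-tiling candidates must be excluded one by one through the parity lemma and, for $3.3.4.12$, a short corona argument.
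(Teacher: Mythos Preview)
The paper does not actually prove this theorem: immediately before the statement it says ``A proof of the following is in Gr\"unbaum--Shephard [Section 2.1]'' and leaves it at that. Your proposal is precisely the classical argument one finds in that reference---reduce to vertex types via the angle equation, enumerate the twenty-one cyclic candidates, discard the ten non-tiling ones by local parity obstructions, and exhibit the eleven survivors---so there is no paper-proof to compare against. Your sketch is correct and complete at the level of an outline.

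One small remark: the ``stubborn'' case $3.3.4.12$ does not really need a separate corona argument. In that type every triangle is flanked at each vertex by one triangle and one non-triangle (either a square or a dodecagon), so the tiles across the three edges of any triangle would have to alternate ``triangle / non-triangle'', which is again a proper $2$-colouring of an odd cycle. This is the same parity principle, just with the dichotomy $\{3\}$ versus $\{4,12\}$ in place of a fixed pair $\{a,b\}$, and it disposes of all ten excluded types uniformly.
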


The first 8 tessellations in Figure \ref{T:fig} are Wythoffian, the last 3 are not. 
There are infinitely many uniform tessellations of $\matH^2$, both Wythoffian (encoded in the first two lines of Figure \ref{semireg_tess_Hn:fig}) and non Wythoffian, and no nice classification seems known. However, it is possible to enumerate them algorithmically, see Max \cite{Max}. Some examples are shown in Figure \ref{some_tess:fig}. 

\begin{figure} 
 \begin{center}
  \includegraphics[width = 3 cm]{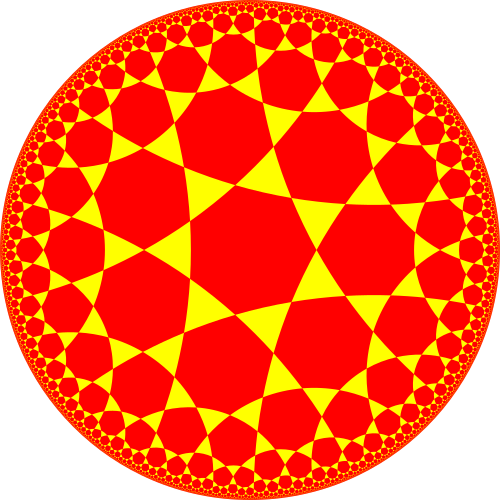}
  \includegraphics[width = 3 cm]{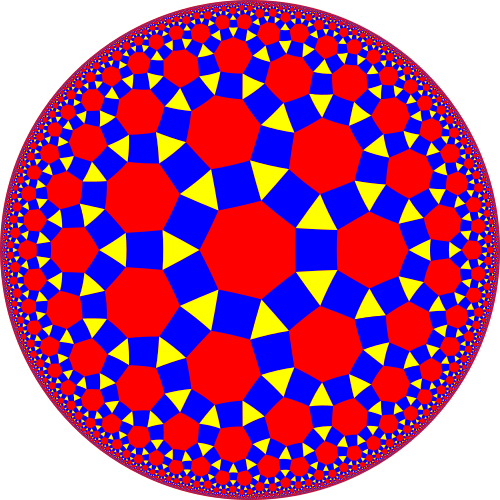}
  \includegraphics[width = 3 cm]{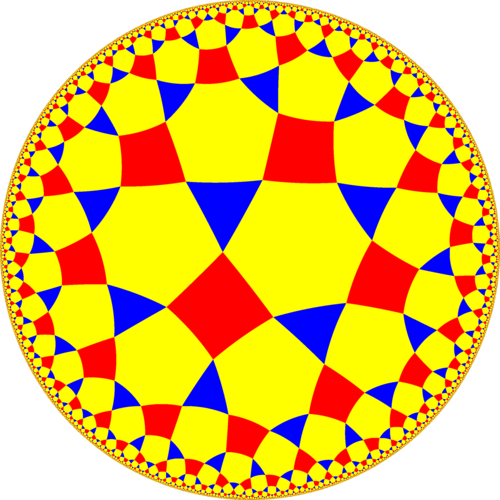}
  \includegraphics[width = 3 cm]{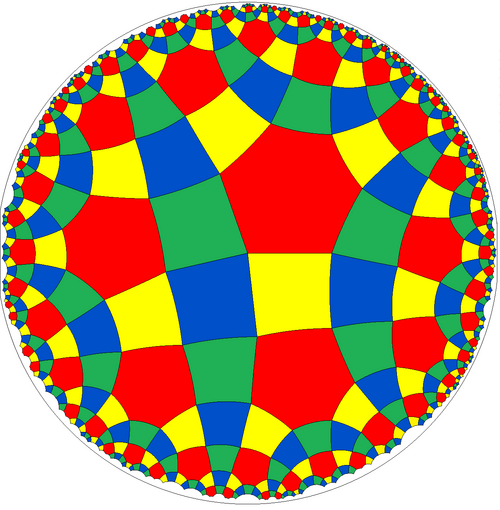}
 \end{center}
 \caption{Some quasi-regular non regular tessellations of $\matH^2$.}  \label{some_tess:fig}
\end{figure}

\begin{figure} 
 \begin{center}
  \includegraphics[width = 6 cm]{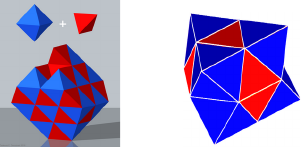}
 \end{center}
 \caption{The two semiregular not regular tessellations of $\matR^3$. They are both made of octahedra (blue) and tetrahedra (red).}  \label{Honeycombs_semiregular:fig}
\end{figure}

\subsubsection{Dimension $\geq 3$ in $\matR^n$}
A complete list of semiregular tessellations in $\matR^n$ for $n\geq 3$ does not seem to be known. 
We know only three non regular examples:
\begin{enumerate}
\item The two tessellations of $\matR^3$ into tetrahedra and octahedra in Figure \ref{Honeycombs_semiregular:fig};
\item The tessellation $5_{21}$ of $\matR^8$ into 8-simplexes and 8-crosspolytopes.
\end{enumerate}

The first tessellation in Figure \ref{Honeycombs_semiregular:fig} is Wythoffian, while the second is not. The first is produced by the diagram with 4 nodes in Figure \ref{semireg_tess_Rn:fig}, or by the circular one in Exercise \ref{diagonal_tess:ex}. The second is obtained from the first by selecting a layer of octahedra and simplexes bounded by two parallel planes, and then reflecting it recursively along the parallel planes.

The mysterious tessellation $5_{21}$ into 8-simplexes and 8-cross-polytopes is the Wythoffian one constructed from the largest diagram in Figure \ref{semireg_tess_Rn:fig}. It was discovered by Gosset \cite{G} and usually indicated with the symbol $5_{21}$ because it is related to the Gosset series $2_{21}, 3_{21}, 4_{21}$. It is the Delaunay tessellation of the $E_8$ lattice $\Lambda < \matR^8$, that is the dual of the Voronoi tessellation. The vertices of the Voronoi tessellation are by definition the \emph{holes} of $\Lambda$, that is the local maxima for the distance function from $\Lambda$. The lattice $\Lambda$ has two kinds of holes: the \emph{deep holes} like $e_1$ that are at distance 1 from $\Lambda$ and the \emph{shallow holes} like $\tfrac 16(5,1,1,1,1,1,1,1)$ that are at distance $2\sqrt 2/3$. Deep and shallow holes have 16 and 9 nearest vertices, that are the vertices of the cross-polytopes and simplexes of the tessellation, centered at the holes. The edges of the tessellation have length $\sqrt 2$, and the sphere-packing dual to the 1-skeleton has been proved by Viazovska to have maxmimum density \cite{Vi}.

\subsubsection{Dimension $\geq 3$ in $\matH^n$}
A complete list of semiregular tessellations in $\matH^n$ does not seem to be known. We get the Wythoffian ones by examining Figure \ref{semireg_tess_Hn:fig}.

\begin{figure} 
 \begin{center}
  \includegraphics[width = 3 cm]{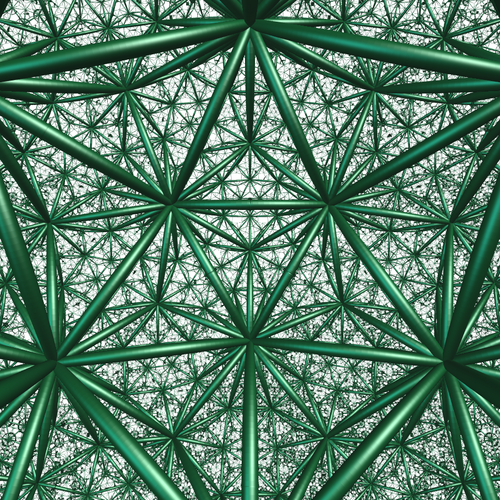} \quad 
  \includegraphics[width = 4 cm]{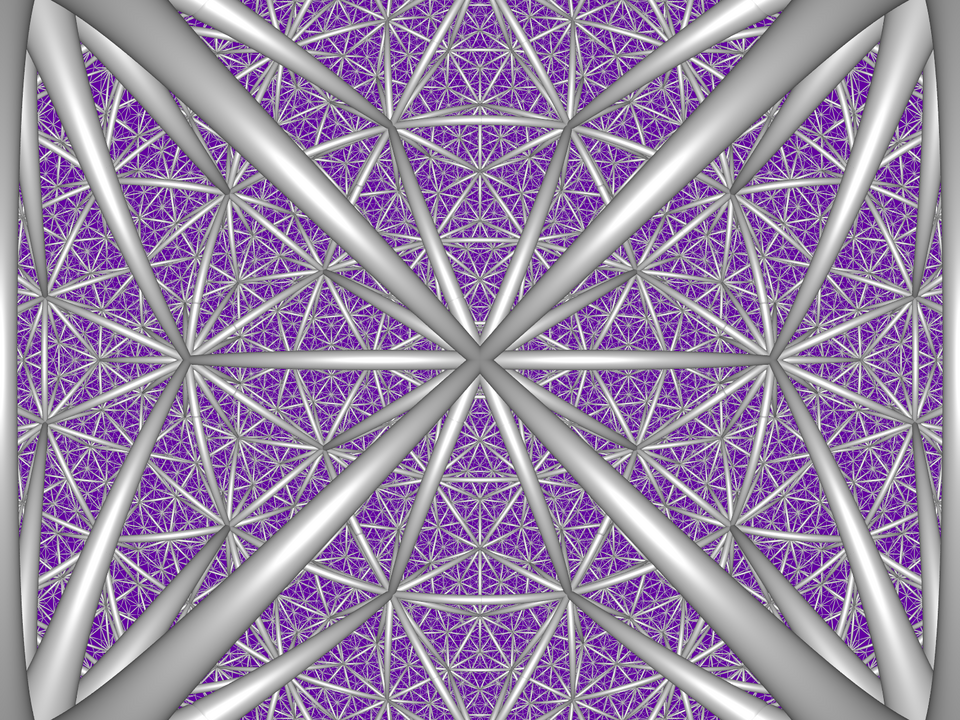} \quad
  \includegraphics[width = 4 cm]{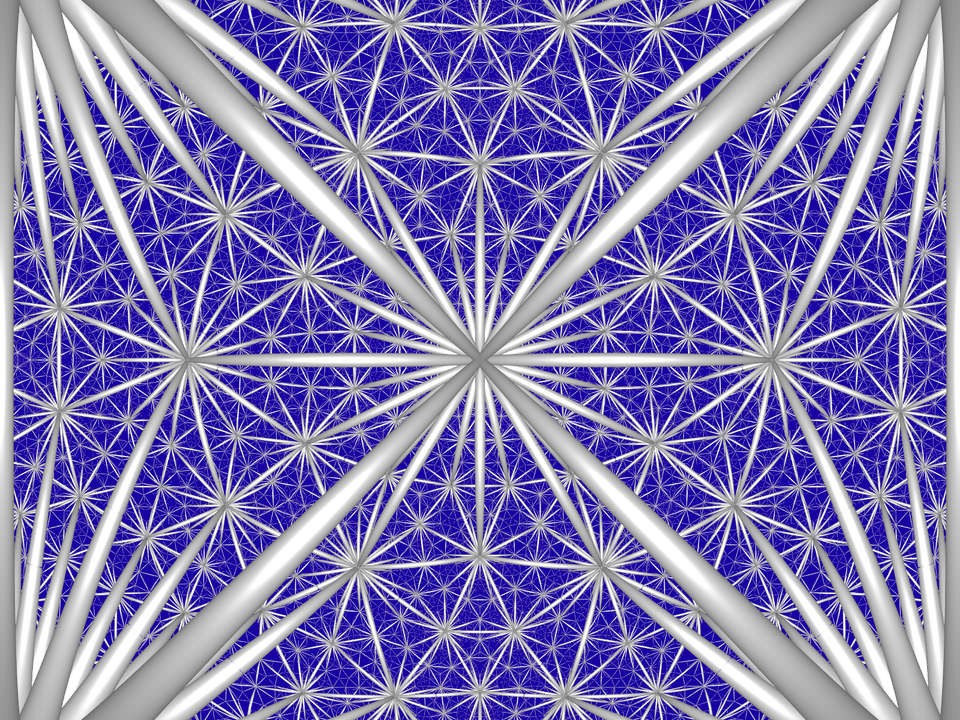} 
 \end{center}
 \caption{The semiregular not regular tessellations of $\matH^3$, 
 (i) by tetrahedra and icosahedra, (ii) by tetrahedra and octahedra, (iii) by tetrahedra, octahedra, and icosahedra.}  \label{Hsr:fig}
\end{figure}

\begin{teo}
The Wythoffian semiregular non regular tessellations of $\matH^n$ in dimension $n\geq 3$ are:
\begin{enumerate}
\item Three tessellations in $\matH^3$ by compact polyhedra:
\begin{itemize} 
\item by tetrahedra and icosahedra, 
\item by tetrahedra and octahedra, 
\item by tetrahedra, octahedra, and icosahedra;
\end{itemize}
\item One tessellation in $\matH^4$ by ideal 4-simplexes and 4-cross-polytopes;
\item One tessellation in $\matH^9$ by ideal 9-simplexes and 9-cross-polytopes.
\end{enumerate}
\end{teo}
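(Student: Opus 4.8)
The plan is to turn the statement into a finite combinatorial search over the classified hyperbolic Coxeter simplexes. By definition a Wythoffian tessellation of $\matH^n$ is produced by a Coxeter -- Wythoff diagram $D$: an irreducible hyperbolic Coxeter simplex with $n+1$ nodes carrying a nonempty set of encircled nodes, where in the non-compact case $D$ has a single ideal vertex and a single encircled node, the one opposite to it. All such $D$ are listed in Figures \ref{Coxeter-hyperbolic:fig}, \ref{Coxeter-hyperbolic-ideal:fig}, and \ref{Coxeter-hyperbolic-ideal2:fig}. By the Proposition relating the faces of the resulting tessellation $T$ to subdiagrams, the facets of $T$ (its $n$-dimensional faces) correspond, up to the action of $\Gamma$, to the $n$-node Coxeter -- Wythoff subdiagrams of $D$; these are exactly the single-node deletions $D\setminus\{i\}$ for which every connected component still contains an encircled node. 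Each facet is combinatorially the uniform polytope $P_f$ attached to its subdiagram, namely the product of the polytopes produced by the components of that subdiagram.

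First I would translate semiregularity into a condition on $D$. Since a Wythoffian tessellation is automatically vertex-transitive, $T$ is semiregular precisely when every facet $P_f$ is a regular polytope. By Theorem \ref{linear:Coxeter:teo} a connected Coxeter -- Wythoff diagram yields a regular polytope if and only if it is a path with exactly one encircled endpoint node; a disconnected subdiagram yields a nontrivial product, which is regular only in the degenerate case where every component is a single encircled node (a hypercube), and that case forces $T$ to be the regular cubic tessellation and is discarded. Hence, after excluding the linear diagrams of Corollary \ref{regular:cor} that already produce regular tessellations, the theorem reduces to finding the diagrams $D$ that are not themselves of regular type but for which every admissible single-node deletion is a path carrying its encircled node at an endpoint, so that each facet is a simplex, a cross-polytope, an octahedron, or an icosahedron.

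Then I would run the search dimension by dimension against the classification. In the compact case the candidate diagrams are the Lann\'er simplexes of Figure \ref{Coxeter-hyperbolic:fig}, which exist only for $n\leq 4$; for $n=3$ a direct inspection of the $4$-node diagrams and their legal encirclings keeps exactly those whose vertex-link subdiagrams are all of type $A_3$, $B_3$, or $H_3$, giving the three tessellations of item (1), while for $n=4$ no encircling makes all facets simultaneously regular without rendering $T$ regular. In the non-compact case $D$ ranges over Figures \ref{Coxeter-hyperbolic-ideal:fig} and \ref{Coxeter-hyperbolic-ideal2:fig}, with the unique encircled node forced opposite the ideal vertex; requiring every valid deletion to be a path of type $A_n$ or $B_n$ with the encircled node at an endpoint (so that the facets are ideal simplexes or ideal cross-polytopes) singles out precisely the diagrams in dimensions $4$ and $9$, yielding items (2) and (3), and rules out the intermediate dimensions $5\leq n\leq 8$.

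The hardest part is the bookkeeping of the non-compact search: the ideal Coxeter simplexes persist all the way to dimension $9$, so one must genuinely examine each diagram in Figures \ref{Coxeter-hyperbolic-ideal:fig} and \ref{Coxeter-hyperbolic-ideal2:fig}, identify the type of every vertex-link subdiagram, and certify that only the dimensions $4$ and $9$ survive while the intermediate ones do not. A second, indispensable point, needed to conclude that the surviving tessellations are \emph{not} regular, is to check in each case that $D$ admits at least two inequivalent $n$-node subdiagrams, so that the facets genuinely split into two distinct $\Gamma$-orbits (simplexes together with cross-polytopes, or tetrahedra together with octahedra and icosahedra); this is exactly what distinguishes them from the regular tessellations already enumerated in Corollary \ref{regular:cor}.
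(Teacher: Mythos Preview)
Your proposal is correct and follows essentially the same route as the paper: the theorem is obtained by inspecting all hyperbolic Coxeter simplex diagrams from Figures \ref{Coxeter-hyperbolic:fig}, \ref{Coxeter-hyperbolic-ideal:fig}, \ref{Coxeter-hyperbolic-ideal2:fig} and retaining exactly those Coxeter -- Wythoff markings whose facet subdiagrams are all of regular type, which is precisely what the paper does (very tersely) in the preceding Proposition and its one-line proof. Your write-up is in fact more explicit than the paper's about the criterion being checked and about the compact/non-compact split.

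One small overstatement: when a single-node deletion disconnects $D$ into isolated encircled nodes, the resulting facet is indeed a hypercube, but this alone does not force $T$ to be the cubic tessellation; it only says that \emph{that} facet is a cube, while other deletions might produce different regular facets. In practice this edge case does not arise among the hyperbolic simplex diagrams in the relevant dimensions (none has a vertex of sufficiently high degree), so the slip is harmless, but the justification you give for discarding it is not quite the right one.
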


The 3-dimensional tessellations are shown in Figure \ref{Hsr:fig}. Note that tetrahedra, octahedra, and icosahedra are the three regular polyhedra that yield only one regular tessellation of $\matH^3$, see Table \ref{nice:regular:table}. The tessellations in $\matH^4$ and $\matH^9$ intersect each horosphere centered at some ideal vertex into the semiregular tessellations in $\matR^3$ and $\matR^8$ with simplexes and cross-polytopes considered above.

\subsection{Uniform polyhedra}
We now turn to uniform polyhedra. The distinction between uniform and semiregular polyhedra is effective only in dimension $n\geq 4$.
The complete list of uniform polyhedra in $\matR^4$ was obtained by Conway -- Guy \cite{CG} in 1965, described with pictures in Conway -- Burgiel -- Goodman-Strauss \cite{CBG},
and finally proved to be complete by M\"oller \cite{Mo} in 2004. 


\begin{teo}
The uniform polyhedra in $\matR^4$ are:
\begin{enumerate}
\item $45$ Wythoffian polyhedra;
\item The snub 24-cell;
\item The grand antiprism;
\item Products of a uniform polyhedron in $\matR^3$ and an interval;
\item Products of two regular polygons.
\end{enumerate}
The types (4) and (5) contain infinitely many elements. 
\end{teo}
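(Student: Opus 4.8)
The plan is to couple the vertex-transitivity of $P$ with the classification of uniform $3$-polyhedra already obtained (the regular, Archimedean, prism, and antiprism solids of Figure~\ref{archimedean:fig}), which supply the possible \emph{facets}, and to run the analysis through the symmetry group and the vertex figure. Let $P\subset\matR^4$ be uniform and let $G=\Isom(P)$, which by definition acts transitively on the vertices. Placing the centroid at the origin, all vertices lie on a single round $\matS^3$ preserved by $G$, so $G$ is a finite subgroup of $O(4)$. The \emph{vertex figure} $V$ at a vertex $v$ is the link of $v$, a $3$-polyhedron whose $2$-faces are the polygonal vertex figures of the facets of $P$ meeting $v$. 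Since every such facet is a uniform $3$-polyhedron, these polygons range over a \emph{finite} list of admissible vertex configurations; this finiteness is what ultimately bounds the problem.

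First I would treat the reducible case. If the orthogonal representation $G<O(4)$ preserves a nontrivial splitting $\matR^4=\matR^a\oplus\matR^b$, one shows that $P$ is forced to be a metric product of its two projections. If $\{a,b\}=\{3,1\}$ the polyhedron is the product of a uniform $3$-polyhedron with an interval; its lateral facets are prisms over the $2$-faces of that polyhedron, which are regular, so these prisms are uniform and $P$ belongs to family (4). If $a=b=2$ the polyhedron is a product of two polygons, and equality of the edge lengths meeting at each vertex forces both polygons to be regular, giving family (5). These are the two infinite prismatic families.

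Next comes the irreducible case with $G$ a reflection group. An irreducible finite reflection group acting on $\matR^4$ is one of $A_4,B_4,D_4,F_4,H_4$, and in this case $P$ is \emph{Wythoffian}: it is produced by the Wythoff construction (Section~\ref{Wythoff:subsection}) from the associated Coxeter simplex, hence it is encoded by a choice of ringed nodes on a rank-$4$ Coxeter--Wythoff diagram. One then enumerates all nonempty ringings of these five diagrams, reads off the facets from the Coxeter--Wythoff subdiagrams to keep exactly the ringings whose facets are all uniform, and removes the redundancies coming from diagram automorphisms and from the products already listed. This is a finite, essentially mechanical bookkeeping, and it yields the $45$ polyhedra of family (1).

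The crux is the irreducible case in which $G$ is \emph{not} generated by reflections. Such a $P$ can only be obtained from a reflection-group construction by an \emph{alternation} (snubbing) or a \emph{diminishing} (deletion of a symmetric set of vertices), so that $G$ sits inside one of the rank-$4$ reflection groups as a proper non-reflection subgroup. The hard part is to show, by a careful metric analysis of which such operations close up to a convex polyhedron all of whose facets are still uniform $3$-polyhedra, that exactly two survive: the snub $24$-cell, with vertex set $I_{120}^*\setminus T_{24}^*$, and the grand antiprism, obtained by deleting two orthogonal decagonal rings of vertices from the $600$-cell. Verifying that these two genuinely yield uniform polyhedra is direct, but excluding every other alternation and diminishing is delicate and lengthy; this completeness argument, due to M\"oller \cite{Mo} after the enumeration of Conway--Guy \cite{CG}, is the step I expect to dominate the proof, the reflection-group and product cases being comparatively routine.
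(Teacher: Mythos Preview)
The paper does not prove this theorem. It is stated as a result from the literature, attributed to Conway--Guy \cite{CG} for the enumeration and to M\"oller \cite{Mo} for completeness, with no proof or sketch given beyond a brief description of the two non-Wythoffian examples. So there is nothing to compare your proposal against: your outline is considerably more detailed than anything the paper offers.

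That said, your sketch has a few soft spots worth flagging. First, in the reducible case, the implication ``$G$ preserves a splitting $\Rightarrow$ $P$ is a metric product of its projections'' is asserted but not argued; this step requires real work (one must show the vertex set, not just the symmetry group, factors). Second, in the reflection-group case you propose to ``keep exactly the ringings whose facets are all uniform'', but by the proposition in Section~\ref{Wythoff:subsection} every Wythoff construction with a well-positioned seed is automatically uniform, so no filtering is needed; the actual work in counting to $45$ is eliminating duplicates across the diagrams $A_4, B_4, D_4, F_4, H_4$ and separating out the products (and the paper notes that in fact only the \emph{linear} diagrams are needed). Third, and most seriously, your assertion that in the irreducible non-reflection case ``such a $P$ can only be obtained from a reflection-group construction by an alternation or a diminishing'' is itself a substantial structural claim, not a triviality; this is essentially the content of M\"oller's theorem, not a preliminary reduction. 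You rightly identify this as the dominant step and defer to \cite{Mo}, which is exactly what the paper does, so your proposal and the paper ultimately rest on the same citation.
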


All the 45 Wythoffian polyhedra are obtained from linear Coxeter -- Wythoff diagrams. The snub 24-cell and the grand antiprism are not Wythoffian. The \emph{grand antiprism} is the convex hull of the $120-20=100$ points in $I_{120}^*$ minus the vertices of two decagons contained in two orthogonal planes. Its facets are simplexes and pentagonal antiprisms. 
The uniform 4-polyhedron with the largest number 14400  of vertices is obtained from the Coxeter -- Wythoff diagram
 \begin{center}
  \includegraphics[width = 2.5 cm]{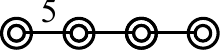}
 \end{center}
It has four type of uniform facets and is shown in Figure \ref{omnitrunc:fig}. The isometry group of the 120-cell (or 600-cell) acts freely and transitively on its vertices.

\begin{figure} 
 \begin{center}
  \includegraphics[width = 8 cm]{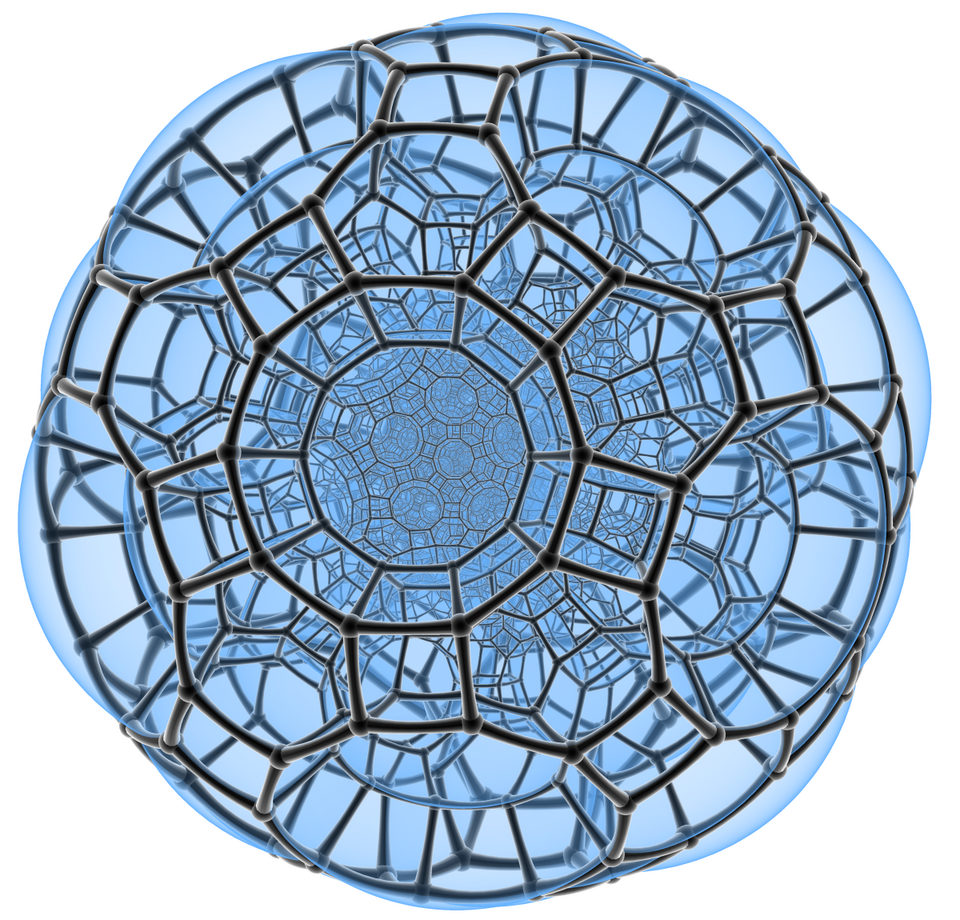}
 \end{center}
 \caption{The uniform 4-polyhedron with the largest number 14400 of vertices.
 The figure shows the stereographic projections in $\matR^3$ of the tessellation of $\matS^3$ induced by the 4-polyhedron.
 }  \label{omnitrunc:fig}
\end{figure}

\subsection{Uniform tessellations}
The distinction between semiregular and uniform tessellations is effective only in dimension $n\geq 3$. There are 28 semiregular tessellations in $\matR^3$ known. The list was completed only in 1994 by Gr\"unbaum \cite{Gr}, who fixed some crucial errors in a pre-existing enumeration. It comprises:
\begin{enumerate}
\item 12 Wythoffian tessellations shown in Figure \ref{Honeycombs:fig};
\item 5 non Wythoffian tessellations shown in Figure \ref{gyrati:fig};
\item 11 tessellations obtained by multiplying those of Figure \ref{T:fig} with an interval.
\end{enumerate}

No proof of the completeness of this list seems to be known.

\begin{figure} 
 \begin{center}
  \includegraphics[width = 12.5 cm]{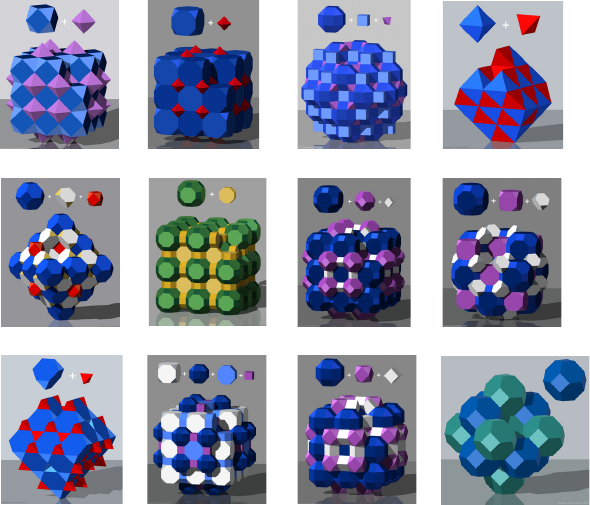}
 \end{center}
 \caption{Twelve Wythoffian uniform Euclidean tessellations.}  \label{Honeycombs:fig}
\end{figure}

\begin{figure} 
 \begin{center}
  \includegraphics[width = 9 cm]{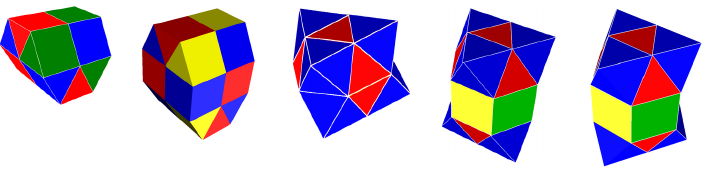}
 \end{center}
 \caption{Five non Wythoffian uniform Euclidean tessellations.}  \label{gyrati:fig}
\end{figure}

\subsection{Dual polyhedra}
Let $P \subset \matR^n$ be a uniform polyhedron. We now define a \emph{dual} polyhedron $P^* \subset \matR^n$ that maintains all the many symmetries of $P$ while inverting the face lattice. The dual polyhedron $P^*$ has many notable properties, and can be realized naturally both in $\matR^n$ and $\matH^n$.

Let $P\subset \matR^n$ be a polyhedron, that we suppose positioned so that its barycenter lies at the origin of $\matR^n$. In all the cases studied here the barycenter can be simply defined as the center of the symmetries of $P$. The \emph{dual polyhedron} is
$$P^* = \{x \in \matR^n\ |\ \langle x,y \rangle \leq 1\, \forall y\in P\} = \{x \in \matR^n \ |\ \langle x, v_i \rangle \leq 1 \}$$
where $v_1,\ldots, v_k$ are the vertices of $P$.

\begin{ex}
The polyhedron $P^*$ is combinatorially dual to $P$, that is there is a natural order-reversing isomorphism of the face lattices of $P$ and $P^*$. The normalized $v_1,\ldots, v_k$ are the normal vectors of the dual facets $F_1,\ldots, F_k$ of $P^*$. We have $P^{**}=P$ up to similarities.
\end{ex}

When $P$ is uniform we may suppose up to a similarity that the vertices $v_1,\ldots, v_k$ have all unitary norm and we also deduce the following.

\begin{prop}
If $P$ is uniform, the dihedral angles of the ridges of $P^*$ are equal.
\end{prop}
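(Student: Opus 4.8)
The plan is to reduce the statement to the single fact that all edges of $P$ have the same length. First I would recall the structure of $P^*$: by the preceding exercise the normalized vertices $v_1,\dots,v_k$ of $P$ are exactly the outward unit normals of the facets $F_1,\dots,F_k$ of $P^*$, and the face lattice of $P^*$ is that of $P$ with its inclusions reversed. Under this duality a ridge (an $(n-2)$-face) of $P^*$ corresponds to an edge of $P$; concretely, the ridge dual to the edge $\overline{v_iv_j}$ is the locus where the two facets $F_i,F_j$ meet. Since $v_i,v_j$ are unit outward normals, the Gram-matrix computation of Section~\ref{poly:section} gives, for the interior dihedral angle $\alpha_{ij}$ at this ridge,
\[
\cos\alpha_{ij} = -\langle v_i,v_j\rangle .
\]

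Next I would express $\langle v_i,v_j\rangle$ through the edge length. Having normalized (as allowed by vertex-transitivity) so that $|v_i|=1$ for all $i$, for any edge $\overline{v_iv_j}$ of $P$ we have
\[
\langle v_i,v_j\rangle = 1-\tfrac12\,|v_i-v_j|^2 = 1-\tfrac12\,\ell_{ij}^2,
\]
where $\ell_{ij}$ denotes its length. Thus $\alpha_{ij}=\arccos\!\big(\tfrac12\ell_{ij}^2-1\big)$ depends only on $\ell_{ij}$. Since every ridge of $P^*$ arises from exactly one edge of $P$ in this way, the dihedral angles of the ridges of $P^*$ are all equal as soon as all edges of $P$ share a common length.

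It therefore remains to prove that a uniform polyhedron has all its edges of equal length, which I regard as the heart of the matter. I would argue this by induction on $n$, in the spirit of the proof for well positioned seeds: for $n=2$ a uniform polygon is regular (by the observation that $(2)\Leftrightarrow(3)$ for $n\leq 2$) and hence equilateral, while for $n\geq 3$ the facets of $P$ are themselves uniform, so by the inductive hypothesis each has all of its own edges of a single length. Two facets meeting along a ridge share at least one edge of $P$ — because for $n\geq 3$ a ridge is a polytope of dimension $\geq 1$ — and so they must have the same common edge length; as the dual graph of $P$ (facets adjacent when they share a ridge) is connected, this common value is the same for every facet, and hence every edge of $P$ has the same length $\ell$.

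The step I expect to be the main obstacle is precisely this last reduction, transporting the equal-edge property from facet to facet. It is clean for $n\geq 3$, where a shared ridge forces a shared edge and the dual graph is connected, but it rests on the base case that uniform polygons are equilateral. Once the common edge length $\ell$ is in hand, the value $\alpha_{ij}=\arccos(\tfrac12\ell^2-1)$ computed above is the same for every ridge of $P^*$, which is the desired conclusion.
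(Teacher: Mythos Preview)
Your argument is correct and follows the same line as the paper's own proof: a ridge of $P^*$ is dual to an edge of $P$, the dihedral angle there is $\arccos(-\langle v_i,v_j\rangle)$ and hence depends only on the edge length, and a uniform polyhedron has all edges of the same length. The paper simply asserts this last fact in one sentence (``Since $P$ is homogeneous all its edges have the same length''), whereas you supply the inductive justification via facets sharing ridges; so your write-up is a more detailed version of the same proof rather than a different approach. One small remark: in your base case you invoke $(2)\Leftrightarrow(3)$ for $n\le 2$, but to conclude that a uniform polygon is \emph{regular} you also need $(1)\Leftrightarrow(2)$ for $n=1$ from the same passage; and be careful that the paper's parameter $n$ refers to an $(n{+}1)$-polyhedron, so your ``$n=2$'' is the paper's ``$n=1$''.
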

\begin{proof}
A ridge $r$ of $P^*$ is dual to an edge $e$ of $P$, and since the vertices of $P$ have the same norm, the dihedral angle of $r$ depends only on the length of $e$. Since $P$ is uniform all its edges have the same length.
\end{proof}

The duals of the semiregular not regular polyhedra in $\matR^3$ of Figure \ref{archimedean:fig} are in Figure \ref{Catalan:fig}, and they consist of the 13 \emph{Catalan polyhedra}, dual to the Archimedean ones, and the infinite families of bipyramids and trapezohedra.

\begin{figure}
 \begin{center}
  \includegraphics[width = 12.5 cm]{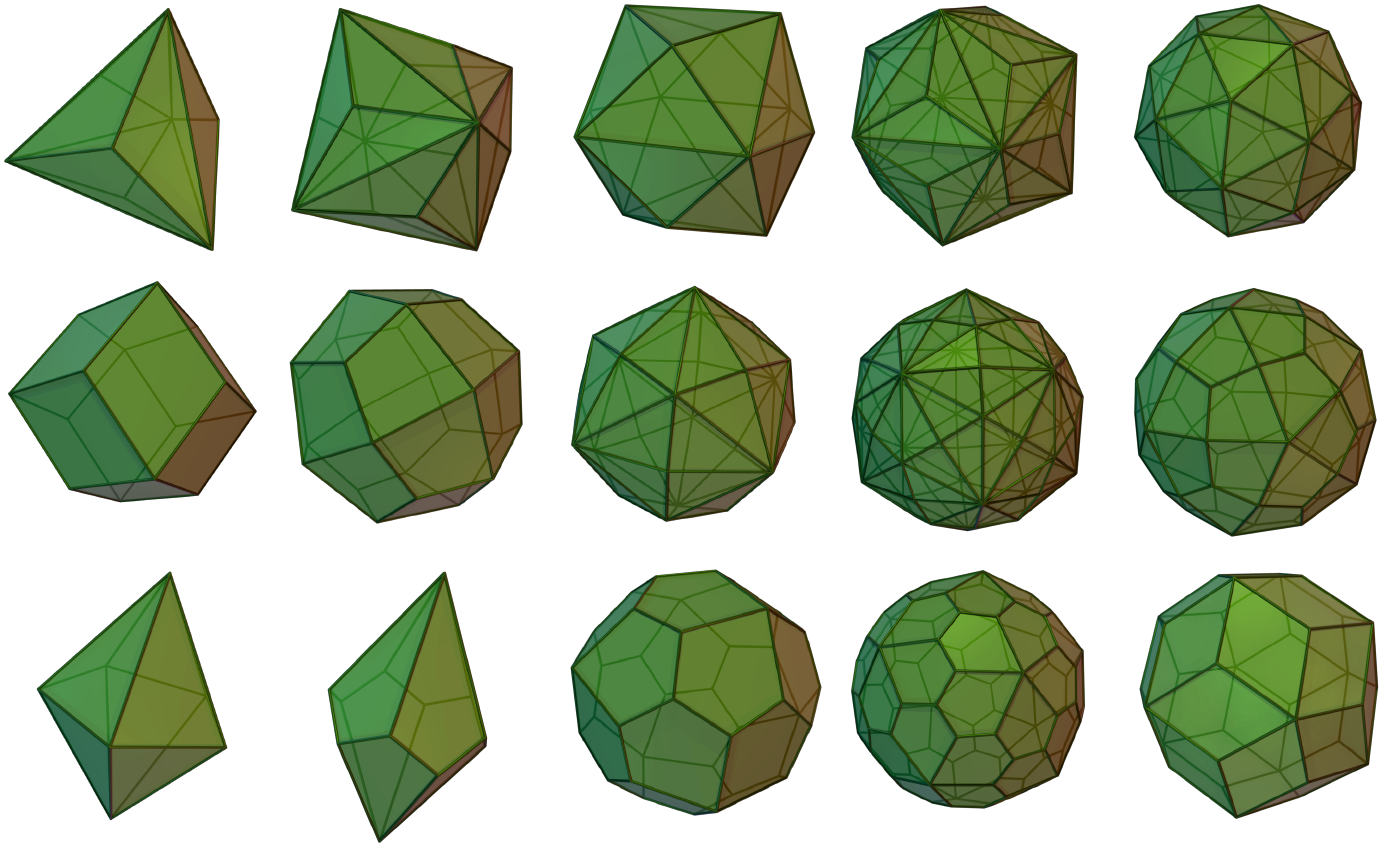}
 \end{center}
 \caption{The dual polyhedra of the semiregular non regular Euclidean polyhedra. They are the 13 \emph{Catalan polyhedra} (the duals of the Archimedean polyhedra, first two rows, plus the last three of the third row), and two infinite families of bipyramids and trapezohedra (the first two of the last row). }  \label{Catalan:fig}
\end{figure}

The polyhedra $P$ and $P^*$ share the same symmetry group $\Gamma$. If $P$ is uniform, $\Gamma$ act transitively on the facets of $P^*$, but not necessarily on its vertices, that are positioned in spheres of different radii (typically corresponding to their $\Gamma$-orbits).

\begin{prop}
If $P$ is semiregular, the links of the vertices of $P^*$ are regular.
\end{prop}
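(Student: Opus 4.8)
The plan is to fix a single vertex $w$ of $P^*$ and show that its link $L \subset \matS^{n-1}$ is a regular spherical polyhedron; since every vertex of $P^*$ is dual to a facet of $P$, and all facets of $P$ are regular by semiregularity, it suffices to treat one such $w$, dual to a regular facet $F$ of $P$. By the combinatorial duality between $P$ and $P^*$, the link $L$ (the vertex figure of $P^*$ at $w$) is combinatorially dual to $F$, hence combinatorially regular; the real content is to upgrade this to geometric regularity, i.e. to produce a flag-transitive group of isometries of $L$.

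First I would identify $L$ explicitly. Writing $P^* = \{x : \langle x, v_j\rangle \le 1\}$, where the $v_j$ are the vertices of $P$, the facets of $P^*$ through $w$ are exactly those dual to the vertices $v_i$ of $F$, so these are the only active constraints at $w$. Passing to the tangent cone of $P^*$ at $w$, the link becomes
\[
L = \{\, d \in \matS^{n-1} \ : \ \langle d, v_i \rangle \le 0 \text{ for every vertex } v_i \text{ of } F \,\},
\]
a spherical polyhedron whose facet normals are precisely the vertices of $F$.

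The heart of the argument is then to exhibit enough isometries of $L$. Let $b_F$ be the barycenter of $F$ and $u = b_F/|b_F|$ its direction (nonzero, since the origin is interior to $P$), so that $w$ lies on the ray through $u$. Every symmetry $g \in \Isom(F)$ fixes $b_F$ and permutes the $v_i$, and I extend it to $\tilde g \in \mathrm{O}(n)$ acting as $g$ on $u^\perp$ and as the identity on $\matR u$. Then $\tilde g$ fixes $w$ and permutes the $v_i$, hence preserves $L$ and acts on it isometrically. Since $F$ is regular, $\Isom(F)$ is flag-transitive on $F$; the induced group $\{\tilde g\}$ acts on the faces of $L$ exactly as $\Isom(F)$ acts on the faces of $F$ under the order-reversing duality, so it is flag-transitive on $L$, and therefore $L$ is regular.

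The main obstacle to anticipate is the temptation to realize the needed symmetries of $L$ as symmetries of $P$ (or of $P^*$) fixing $w$: this fails, because the stabilizer of a facet inside $\Isom(P)$ need not act flag-transitively on that facet — the square facets of a pentagonal prism are the standard cautionary example. The point of the construction above is that the relevant symmetries of $L$ are the \emph{intrinsic} symmetries of the regular facet $F$, extended to orthogonal transformations of $\matR^n$ which move $P^*$ as a whole but fix the single vertex $w$ together with its link.
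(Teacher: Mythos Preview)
Your proof is correct and follows the same underlying idea as the paper's one-line argument (``By construction these are dual to the facets of $P$, that are regular''), namely that the vertex link of $P^*$ at $w$ is the polar dual of the regular facet $F$, and the dual of a regular polyhedron is regular. You simply carry out this duality explicitly by constructing the symmetries $\tilde g$, whereas the paper leaves all of that implicit.

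One small point you use without stating it: for the extension $\tilde g$ to make sense you need the affine span of $F$ to be orthogonal to $u = b_F/|b_F|$, i.e.\ $v_i - b_F \in u^\perp$ for each vertex $v_i$ of $F$. This holds because the $v_i$ all lie on the unit sphere about the origin, so their circumcenter in the supporting hyperplane of $F$ is the foot of the perpendicular from the origin; since $F$ is regular this circumcenter coincides with $b_F$. With that observation your construction goes through exactly as written. Your final paragraph, warning that the required symmetries of $L$ need not extend to symmetries of $P^*$, is a genuinely useful remark that the paper's terse proof does not make.
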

\begin{proof}
By construction these are dual to the facets of $P$, that are regular.
\end{proof}

Summing up, all the polyhedra $P$ in Figure \ref{Catalan:fig} have equal dihedral angles, the links of the vertices are all regular (of different types), and isometries act transitively on the faces.

\subsection{Right-angled hyperbolic polyhedra}
If $P$ is semiregular, the dual polyhedron $P^*$ has a very interesting \emph{hyperbolic realization} in the Klein model where the vertices that lie in the largest sphere are positioned at infinity. 
The polyhedron $P^*$ has typically both ideal and real vertices. This realization is nice because the links of the ideal vertices are all regular. 

\begin{prop}
If $P$ is semiregular and its facets are cross-polytopes and simplexes, the hyperbolic realization of $P^*$ is a right-angled hyperbolic polyhedron.
\end{prop}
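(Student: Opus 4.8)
The plan is to realize $P^*$ in the Klein model at the unique scale that sends its outermost vertices to $\partial \matH^n$, and then to show that every dihedral angle of the resulting hyperbolic polyhedron equals $\pi/2$. Two ingredients drive the argument: all dihedral angles turn out to be \emph{equal}, and the vertex links pin down their common value.

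First I would normalize so that the barycenter of $P$ sits at the origin and all vertices of $P$ lie on the unit sphere; this is possible because $P$ is semiregular, hence vertex-transitive, and for the same reason all edges of $P$ have a common length $\ell$. Writing $v_1,\dots,v_k$ for the unit vertices, the facets of $P^*$ lie in the hyperplanes $\langle x,v_i\rangle=1$, and after rescaling by the factor $\lambda$ that places the outermost vertices on $\partial\matH^n$ they lie in $\langle x,v_i\rangle=\lambda$ with $\lambda<1$. In the Klein model such a Euclidean hyperplane is the hyperbolic hyperplane with Minkowski normal $(\lambda,v_i)\in\matR^{n,1}$, and two facets $i,j$ of $P^*$ are adjacent exactly when $v_i,v_j$ span an edge of $P$. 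For such a pair $\langle v_i,v_j\rangle = 1-\ell^2/2$ is constant, so $\langle(\lambda,v_i),(\lambda,v_j)\rangle = -\lambda^2+\langle v_i,v_j\rangle$ is the same for every adjacent pair. Hence, exactly as in the earlier statement that the ridges of $P^*$ have equal dihedral angle, the hyperbolic realization has a single common dihedral angle $\theta$, and it is right-angled if and only if $\theta=\pi/2$.

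Next I would read off $\theta$ from the vertex links. By the proposition that the links of the vertices of $P^*$ are regular, together with the combinatorial duality identifying such a link with the dual of the corresponding facet of $P$, every vertex link of $P^*$ is a regular polyhedron that is either a cube (dual of a cross-polytope facet) or a simplex (dual of a simplex facet); a real vertex carries a spherical link and an ideal vertex a Euclidean one, and in all cases the link's dihedral angles equal $\theta$. Since $P$ has facets of both types, and these two types sit at different distances from the origin — for if they coincided both kinds of vertices would be ideal, forcing $\theta$ to equal both $\pi/2$ and $\arccos\frac1{n-1}$ — exactly one family of vertices is ideal and the other real. I would then run a short case analysis: if the ideal link is the Euclidean cube then $\theta=\pi/2$ immediately; if instead the ideal link is the regular Euclidean simplex then $\theta=\arccos\frac1{n-1}<\pi/2$, and the complementary real vertices would carry spherical cubes with all dihedral angles $\le\pi/2$, which is impossible because by Theorem \ref{non-obtuse:teo} a non-obtuse spherical polyhedron must be a simplex. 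Only the first case survives, giving $\theta=\pi/2$.

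The genuinely delicate point is this last one: controlling \emph{which} family of vertices goes to infinity. Rather than computing the two facet-distances $h_{\mathrm{cross}}$ and $h_{\mathrm{simp}}$ explicitly (which would work but is tedious and model-dependent), the clean route is to let the constancy of $\theta$ do the bookkeeping and invoke the non-obtuse theorem to exclude the spherical cube. This is precisely what forces the cross-polytope facets to be the outer ones, so that the Euclidean cube appears as the ideal-vertex link — exactly the mechanism producing the right angles.
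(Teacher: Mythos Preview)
Your argument is correct and mirrors the paper's: constant dihedral angle $\theta$, vertex links are cubes or simplexes, and the cube links must be the ideal ones so that $\theta=\pi/2$. The only variation is in how you exclude the alternative case: the paper simply observes that the Euclidean dihedral angle of the cube exceeds that of the regular simplex (implicitly using that spherical links have larger angles than Euclidean ones), whereas you invoke Theorem~\ref{non-obtuse:teo} to rule out a non-obtuse spherical cube --- both routes work and yours cites a result already proved in the paper.
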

\begin{proof}
The link of the vertices of $P^*$ are $n$-cubes and regular simplexes. The dihedral angle of a Euclidean $n$-cube is larger than the one of the Euclidean simplex, so since $P^*$ has constant dihedral angles $\theta$ the only possibility is that the vertices at infinity have $n$-cubes links, and therefore $\theta = \pi/2$.
\end{proof}

We can classify an interesting class of very symmetric right-angled polyhedra.

\begin{teo}
The right-angled hyperbolic $n$-polyhedra whose isometry group acts transitively on their facets, and whose ideal vertex links are $(n-1)$-cubes, are:
\begin{enumerate}
\item The right-angled regular $k$-gons, $k\geq 5$, and regular ideal $h$-gons, $h\geq 3$;
\item The right-angled regular polyhedra of dimension 3 and 4 listed in Table \ref{nice:regular:table};
\item The following 5 polyhedra in dimension 3:
\begin{center}
\includegraphics[width = 2 cm]{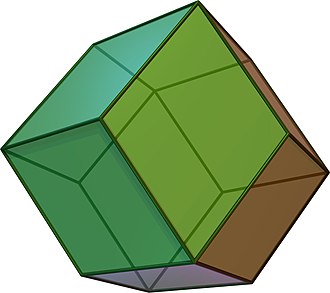}  \
\includegraphics[width = 1.8 cm]{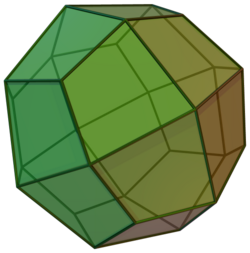} \
\includegraphics[width = 1.8 cm]{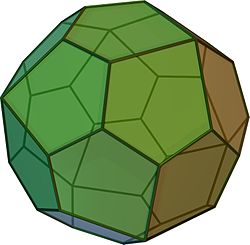} \
\includegraphics[width = 2.5 cm]{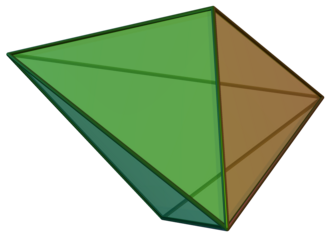} \
\includegraphics[width = 1.7 cm]{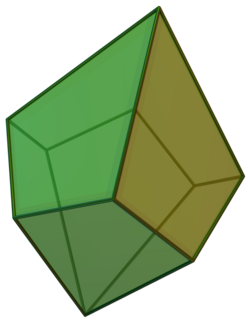}
\end{center}
\item The polyhedra in dimension $n\geq 4$ listed in Table \ref{right:table}.
\end{enumerate}
The polyhedra of type (3) and (4) have both real and ideal vertices. 
\end{teo}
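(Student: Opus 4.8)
The plan is to reduce the classification to the already-established classification of \emph{semiregular} polyhedra (and the Euclidean tessellations that appear on horospheres) all of whose facets are regular simplexes and regular cross-polytopes, by passing to the dual. Let $Q\subset \matH^n$ be a right-angled polyhedron satisfying the two hypotheses, and let $P=Q^*$ be its combinatorial dual, realized as a Euclidean polyhedron. Facet-transitivity of the isometry group of $Q$ becomes transitivity on the \emph{vertices} of $P$, which is exactly the vertex-homogeneity needed for $P$ to be semiregular; so the first move is simply to record that $P$ is vertex-transitive.

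Next I would determine the facets of $P$, which correspond to the vertices of $Q$ and are combinatorially dual to the vertex figures of $Q$. Because $Q$ is right-angled, its vertex links are right-angled: a real vertex of $Q$ has a right-angled spherical link, which by Theorem \ref{non-obtuse:teo} is a spherical simplex (indeed the unique right-angled one, with orthonormal outward normals, hence rigid and regular), while an ideal vertex has a right-angled Euclidean link, which by the same theorem is a product of segments, i.e.\ a box; the hypothesis pins this down as a regular $(n-1)$-\emph{cube}. Dualizing, the facets of $P$ coming from real vertices are regular simplexes and those coming from ideal vertices are regular cross-polytopes. Thus $P$ is a vertex-transitive Euclidean polyhedron with only regular simplex and regular cross-polytope facets, i.e.\ a semiregular polyhedron of the special type studied above; conversely, the preceding propositions show that the right-angled hyperbolic realization of the dual of any such $P$ reproduces a polyhedron with exactly the two stated properties, so $Q\leftrightarrow P=Q^*$ is a bijection onto this class.

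It then remains to read off the class. The regular members contribute the right-angled regular polyhedra of dimension $3$ and $4$ in Table \ref{nice:regular:table} (and, in the degenerate $2$-dimensional case where facets are edges, the regular $k$-gons and the ideal regular $h$-gons), while the non-regular ones form the finite list coming from the Gosset series together with the demicube, analyzed via Exercises \ref{demicube:ex} and \ref{move:ex} and the theorem of Blind--Blind. Dualizing each, the three-dimensional uniform polyhedra with only triangular and square faces yield the five exceptional duals displayed (the rhombic dodecahedron, the deltoidal and pentagonal icositetrahedra, and the bipyramid and trapezohedron), and the higher Gosset-type members yield the entries of Table \ref{right:table}. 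The fully ideal cases correspond on the dual side to the semiregular Euclidean tessellations with simplex and cross-polytope cells that occur on horospherical links, namely those in $\matR^3$ and $\matR^8$, producing the ideal right-angled polyhedra in dimensions $4$ and $9$.

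The main obstacle, I expect, is not the combinatorial bookkeeping but the metric rigidity: one must check that the right-angled condition, which only constrains dihedral angles to be $\pi/2$, forces each vertex figure to be \emph{isometrically} regular (so that its dual facet is a genuine regular simplex or cross-polytope and not merely a combinatorial one), and then that such a semiregular $P$ admits a \emph{unique} right-angled hyperbolic realization of $Q=P^*$ with the prescribed vertices placed at infinity. The first point follows because a right-angled spherical simplex is automatically rigid, while a right-angled Euclidean box becomes rigid only after the cube hypothesis equalizes its edges; the second is the content of the preceding proposition together with Vinberg's Realization Theorem \ref{realization:teo}, which supplies existence and uniqueness of the hyperbolic polyhedron from its Gram matrix. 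The residual care is to match up compact versus cusped cases and to isolate the genuinely low-dimensional exceptions.
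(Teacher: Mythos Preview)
Your approach is exactly the paper's: pass to the dual in the Klein model, observe that it is semiregular with only simplex and cross-polytope facets, and read off the list from Section~\ref{semiregular:subsection}. The extra care you take with the two directions of the correspondence and with rigidity is fine and more explicit than the paper's one-line proof.

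There is, however, a concrete error in your final paragraph. There is no fully ideal right-angled polyhedron in $\matH^9$ in this classification, and the ``fully ideal cases'' do \emph{not} correspond to semiregular Euclidean tessellations. A fully ideal $Q$ has every vertex link an $(n-1)$-cube, so every facet of the dual $P$ is an $(n-1)$-cross-polytope; the semiregular polyhedra with \emph{only} cross-polytope facets are the cube ($n=3$) and the $24$-cell ($n=4$), whose duals are precisely the ideal right-angled octahedron and the ideal right-angled $24$-cell already sitting in the ``regular'' bullet. The tessellation $5_{21}$ of $\matR^8$ that you invoke is not a polyhedron and does not dualize to a finite-volume object in $\matH^9$; you have conflated the list of semiregular polyhedra with the list of semiregular hyperbolic \emph{tessellations} (the latter does contain an entry in $\matH^9$, but that is irrelevant here). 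Also, when you say the higher-dimensional non-regular cases come from ``the Gosset series together with the demicube'', be sure you are including the rectified $4$-simplex, which is the dual of $P_4$ in Table~\ref{right:table} and is not usually counted as part of the Gosset series.
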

\begin{proof}
The dual of such a polyhedron (considered in the Klein model) is semi-regular with facets that are simplexes and cross-polytopes. We have listed these polyhedra in Section \ref{semiregular:subsection}.
\end{proof}

\begin{table} 
\begin{center}
\begin{tabular}{c||cccc}
\phantom{\Big|} dim & polyhedron & dual & facets & vertices \\
\hline\hline
\phantom{\Big|} $4$ & $P_4$ & rectified 4-simplex & 10 $P_3$ & 5 ideal, 5 real \\
\hline
\phantom{\Big|} $5$ & $P_5$ & 5-demicube & 16 $P_4$ & 10 ideal, 16 real \\
\hline
\phantom{\Big|} $6$ & $P_6$ & $2_{21}$ & 27 $P_5$ & 27 ideal, 72 real \\
\hline
\phantom{\Big|} $7$ & $P_7$ & $3_{21}$ & 56 $P_6$ & 126 ideal, 576 real \\
\hline
\phantom{\Big|} $8$ & $P_8$ & $4_{21}$ & 240 $P_7$ & 2160 ideal, 17280 real 
\end{tabular}
\end{center}
\vspace{.3 cm}
\caption{Right-angled hyperbolic polyhedra of dimension $n\geq 4$. Here $P_3$ is a triangular bipyramid.}
\label{right:table}
\end{table}

The right-angled polyhedra $P_4,\ldots, P_8$ have been discovered by various authors, and being right-angled they are well-suited to build interesting hyperbolic manifolds, see Agol -- Long -- Reid \cite{ALR}, 
Potyagailo -- Vinberg \cite{PV}, Ratcliffe -- Tschantz \cite{RT, RT4, RT5}, 
Everitt -- Ratcliffe -- Tschantz \cite{ERT}, Italiano -- Martelli -- Migliorini \cite{IMM}.

\end{document}